\documentclass[a4paper]{amsart}

\usepackage{verbatim,amsmath,amssymb,enumitem,amsthm}
\usepackage{mathtools}
\usepackage{tikz}

\theoremstyle{plain}   
\begingroup 

\newtheorem{theorem}{Theorem}[section]   

\newtheorem{corollary}[theorem]{Corollary}     
\newtheorem{lemma}[theorem]{Lemma}         
\newtheorem{proposition}[theorem]{Proposition}  

\newtheorem{question}[theorem]{Question}

\endgroup


\theoremstyle{definition}
\newtheorem{definition}[theorem]{Definition}   

\theoremstyle{remark}
\newtheorem{remark}[theorem]{Remark}        
\newtheorem{example}[theorem]{Example}        

\newtheorem{notation}[theorem]{Notation}

\newcommand{\vf}{\varphi}

\newcommand{\spa}{\operatorname{span}}
\newcommand{\cal}{\mathcal}

\newcommand{\dist}{{\rm dist}\,}
\newcommand{\rd}{{\mathbb R}^d}
\newcommand{\sph}{{{\mathbb S}^{d-1}}}
\newcommand{\R}{{\mathbb R}}
\newcommand{\N}{{\mathbb N}}
\newcommand{\INt}{{\rm int}\,}
\newcommand{\ep}{\varepsilon}
\newcommand{\nor}{{\rm nor}\,}

\newcommand{\diam}{{\rm diam}\,}

\newcommand{\Tan}{{\rm Tan}\,}

\newcommand{\reach}{{\rm reach}\,}

\newcommand{\Ha}{{\cal H}}

\newcommand{\bD}{{\mathbb D}}

\newcommand{\lip}{\operatorname{Lip}}
\newcommand{\bw}{\textstyle{\bigwedge\nolimits}}
\newcommand{\arc}{{\rm arc}}

\newcommand{\eps}{\varepsilon}

\def\en{\mathbb N}
\def\er{\mathbb R}

\newcommand{\graph}{\operatorname{graph}}

\newcommand{\conv}{\operatorname{conv}}

\newcommand{\epi}{\operatorname{epi}}
\newcommand{\sepi}{\operatorname{epi_S}}
\newcommand{\hyp}{\operatorname{hyp}}
\newcommand{\shyp}{\operatorname{hyp_S}}

\newcommand{\llc}{\;\halfsq\;}
\newcommand{\lrc}{\;\ihalfsq\;}

\def\halfsq{\hbox{\kern1pt\vrule height 7pt\vrule width6pt height 0.4pt depth0pt\kern1pt}}
\def\ihalfsq{\hbox{\kern1pt \vrule width6pt height 0.4pt depth0pt
		\vrule height 7pt \kern1pt}}
  
\begin{document}

\title{On the structure of WDC sets}
\author{Du\v san Pokorn\' y, Jan Rataj and Lud\v ek Zaj\'\i \v cek}

\begin{abstract}
WDC sets in ${\mathbb R}^d$ were recently defined as sublevel sets of DC functions (differences of convex functions) at weakly regular values. They form a natural and substantial generalization of sets with positive reach and still admit the definition of curvature measures. Using results on singularities of convex functions, we obtain regularity results on the boundaries of WDC sets. In particular, the boundary of a compact WDC set can be covered by finitely many DC surfaces. More generally, we prove that any compact WDC set $M$ of topological dimension $k\leq d$ can be decomposed into the union of two sets, one of them being a $k$-dimensional DC manifold open in $M$, and the other can be covered by finitely many DC surfaces of dimension $k-1$. We also characterize locally WDC sets among closed Lipschitz domains and among lower-dimensional Lipschitz manifolds. Finally, we find a full characterization of locally WDC sets in the plane.
\end{abstract}

\keywords{WDC set, DC aura, deformation retraction, Gauss-Bonnet formula, Lipschitz manifold, DC manifold, DC domain}
\subjclass[2000]{26B25, 53C65}
\date{\today}
\thanks{The authors were supported by the grant GACR No. P201/15-08218S}
\maketitle

\section{Introduction}
Federer in his fundamental paper \cite{Fe59} unified the approaches of convex and differential geometry, introducing curvature measures for sets with positive reach and proving the kinematic formulas. Quite recently, curvature measures have been defined for a substantially larger class of so-called (locally) WDC sets \cite{PR13}, and the corresponding kinematic formulas have been proved \cite{FPR15}. The basic difference between the two named set classes is that, while sets with positive reach are closely related to semiconvex functions of several variables, WDC sets are related to DC functions (i.e., differences of two convex functions) instead.

Following \cite{Fu94}, we say that a locally Lipschitz function $f:\rd\to [0,\infty)$ is an {\it aura} for a set $A\subset\rd$ if $A=f^{-1}\{0\}$ and $0$ is a weakly regular value of $f$ (i.e., there exist no sequences $x_i\to x$ and $u_i\to 0$ such that $f(x_i) >0=f(x)$  and  $u_i\in\partial f(x_i)$ are subgradients in the Clarke sense). 

This notion is motivated by the fact that $A$ has locally positive reach if and only if $A$ has a semiconvex aura \cite{Ba82}.  By the definition, $A$ is WDC if and only if it has a DC aura.
 So each set with locally positive reach is a   WDC set.

Because of the theory built in \cite{PR13} and \cite{FPR15}, the following rough question
 naturally arises: {\em  What is the structure of a general WDC set?} Note that, in contrast with sets with positive reach which are defined by the geometrically illustrative ``unique footpoint'' property, there seems to be no purely geometric property characterizing WDC sets. Also, there is a number of results on the structure of sets of positive reach, see, e.g., \cite{Fe59}, \cite{Ly05}, or the recent article  \cite{RZ}.
In the present article we prove some results on WDC sets, which are analogous to these results
on sets of positive reach.
 
\paragraph{\em Boundaries of WDC sets} 
We show that the boundary of a compact WDC set in $\R^d$ can be covered by finitely many DC hypersurfaces (i.e., graphs of Lipschitz DC functions of $d-1$ variables, see Proposition~\ref{wdcpokr}). Also, we show that a closed Lipschitz domain is locally WDC if and only if it is a closed DC domain (Theorem~\ref{wdcdom}).

\paragraph{\em Lower dimensional WDC sets}
Let $M\subset\R^d$ be a $k$-dimensional Lipschitz submanifold ($1\leq k<d$). Federer \cite[Remark~4.20]{Fe59} claimed that if $M$ has positive reach then it is already a $C^{1,1}$ manifold. In analogy to this result, we prove that if $M$ is a locally WDC set instead, it must be a DC manifold. We do not know whether the same is true for topological manifolds as it is (see \cite[Proposition~1.4]{Ly05}) in the case of sets with positive reach.

\paragraph{\em Structure of WDC sets}
We conjecture (see also Question~\ref{Q:decomposition}) that any compact WDC set $M\subset\R^d$ can be partitioned (``stratified'') into finitely many sets $M=T_1\cup\dots\cup T_m$, where each $T_i$ is a $k_i$-dimensional DC manifold, $0\leq k_i\leq d$, $i=0,\dots, m$. We prove a related easier result, namely that $M^{[0]}\cup\dots\cup M^{[d]}$ is open and dense in $M$, where $M^{[k]}$ is the set of all points $a\in M$ that agree with a $k$-dimensional DC manifold on a neighbourhood of $a$ (Corollary~\ref{slstr1}). This follows from our main result saying that for any relatively open subset $A\subset M$ of topological dimension $k$, $A\setminus A^{[k]}$ can be covered by finitely many DC surfaces of dimension $k-1$ if $k\geq 1$ (Theorem~\ref{struc}). This theorem is an analogue of a result of Federer on sets with positive reach, see Remark~\ref{ZFV}.

\paragraph{\em Planar locally WDC sets} 
In dimension $d=2$, we were able to prove a full (local) characterization of locally WDC sets. Roughly speaking, a set in $\R^2$ is locally WDC if and only if its complement can be locally represented as a disjoint finite union of sectors bounded by DC curves (see Theorem~\ref{T:characterisationInPlaneNew}). This proves, in particular, our conjecture on the structure of locally WDC sets in the planar case (see Remark~\ref{R:remarkToDecomposition} (ii)).

Throughout the paper we use two main technical tools. The first are results on singularities of convex and DC functions (partly proved in \cite{RZ}) which refine results of \cite{zajcon}. They show that certain singular sets (sets of nondifferentiability) of convex and DC functions can be covered by {\it finitely many} DC surfaces. These results are contained in Section~\ref{Sec_sing}.
 
The second main technical tool, which is quite essential for finer results on WDC sets, is a deformation lemma for Lipschitz functions applicable to Lipschitz and proper auras (Lemma~\ref{DL}), proving the existence of a deformation retraction for sublevel sets.
Using this deformation lemma, we present in Section~\ref{Sec_GB} also  a proof of the Gauss-Bonnet formula for sublevel sets at weakly regular values of Lipschitz, proper Monge-Amp\`ere functions (in particular, for WDC sets) relating the zero order curvature to the Euler-Poincar\'e characteristic (Proposition~\ref{P_GB}).
This is not a new result, the Gauss-Bonnet formula was proved in a more general context by Fu \cite{Fu94} and used in \cite{PR13}. The proof in \cite{Fu94}, however, uses some arguments which are only outlined. Since the Gauss-Bonnet formula is a cornerstone of the whole theory, we decided to provide a detailed proof, using the same idea as that of Fu, together with Lemma~\ref{DL}.

We would like to thank Joseph Fu for many helpful conversations.

\section{Preliminaries}
\subsection{Basic definitions}
We will use the notation $A^c$ for the complement of a set $A$. 
In any vector space $V$, we use the symbol $0$ for the zero element, $\overline{xy}$ for the closed segment with endpoints $x,y$ and $\spa M$ for the linear span of a set $M$. 
By a subspace of $V$ we always mean a linear subspace, unless specified otherwise. The symbol $B(x,r)$ ($\overline B(x,r)$) denotes the open (closed) ball with centre $x$ and radius $r$ in a metric space.

Let $X$ be a (real) Banach space with norm $|\cdot|$. If $x\in X$ and $x\in X^*$, we set  $\langle x, x^* \rangle \coloneqq  x^*(x)$. $\Tan(A,a)$ denotes the tangent cone of $A\subset X$ at $a\in X$. If $X$ is a Hilbert space and $V$ a closed subspace of $X$, we denote by $\pi_V$  the orthogonal projection to $V$.
 
We shall work mostly in the Euclidean space $\R^d$ with the standard scalar product $u\cdot v$ and norm $|u|$, $u,v\in\R^d$. The unit sphere in $\R^d$ will be denoted by $S^{d-1}$ and
 by $G(d,k)$ we denote the set of all $k$-dimensional linear subspaces of $\R^d$.

A mapping is called $K$-Lipschitz if it is Lipschitz with a constant $K$, and $\lip f$ denotes the (minimal) Lipschitz constant of $f$.
A bijection $f$ is called bilipschitz if both $f$ and $f^{-1}$ are Lipschitz. 
For a function $f:\R^d\to\R$ and $c\in\R$, we use the short notation $\{f\leq c\}$ for the set $\{x\in\R^d:\, f(x)\leq c\}$ (and analogously with other types of inequalities).

If $H$ is a finite-dimensional Hilbert space, $U\subset H$ open, $f:U\to\R$ locally Lipschitz and $x\in U$, we denote by $\partial f(x)$ the {\it subgradient of $f$ at $x$ in the Clarke sense}, which can be defined as the closed convex hull of all limits $\lim_{i\to\infty}f'(x_i)$ such that $x_i\to x$ and $f'(x_i)$ exists for all $i\in\N$. Since we identify $H^*$ with $H$ in the standard way, we sometimes consider $\partial f(x)$ as a subset of $H$. We will repeatedly use the fact that the mapping $x\mapsto\partial f(x)$ is upper semicontinuous and, hence (see \cite[Theorem~2.1.5]{Cl90}), 
\begin{equation}\label{uzcl}
v\in\partial f(x)\ \ \text{ whenever}\ \  x_i\to x,\  v_i\in\partial f(x_i)\ \ \text{ and}\ \  v_i\to v.
\end{equation}
We also use that $|u|\leq\lip f$ whenever $u\in\partial f(x)$, $x\in U$.

\subsection{DC functions, mappings, surfaces and manifolds}\textbf{}

 \begin{definition}\label{dc}
 Let $X$, $Y$ be finite-dimensional Banach spaces, $C\subset X$  an open convex set and $G\subset X$ an open  set.
\begin{enumerate}
\item[(i)]
A real function on $C$ is called a DC function if it is a difference of two  convex functions.
\item[(ii)]
We say that a mapping $F:C \to Y$ is DC if $y^* \circ F$ is DC for every functional $y^* \in Y^*$.
\item[(iii)]
We say that $f:G \to \R$ (resp. $F:G \to Y$) is locally DC if for each $x \in G$ there exists
 $\delta>0$ such that $f$ (resp. $F$) is DC on $B(x, \delta)$.
\end{enumerate}
\end{definition}
We will use the following well-known properties of DC functions and mappings.

\begin{lemma}\label{vldc}
Let $X$, $Y$, $C$, $G$ be as in Definition \ref{dc}. Then the following assertions hold.
\begin{enumerate}
\item[{\rm (i)}]
	If $f: C\to Y$ and $g: C\to Y$ are DC, then (for each $a\in \R$, $b\in \R$) the mappings $af + bg$ is DC.
	 If $Y= \R$, then also
	 $\max(f,g)$ and $\min(f,g)$ are DC.
\item[{\rm (ii)}]
$F:C \to Y$ is DC if and only if $y^* \circ F$ is DC for each
  $y^*$ from a basis of $Y^*$.
\item[{\rm (iii)}]	
 If  $F: G \to Y$ is $C^{1,1}$ (i.e., $F$ is differentiable and
 the derivative $x \mapsto F'(x)$ is Lipschitz on $G$), then $F$ is locally DC. In particular
 (cf. (iv)) each affine $F:X \to Y$ is DC.
	\item[{\rm (iv)}]
	 Each locally DC  mapping  $F:C \to Y$ is DC.
	\item[{\rm (v)}]
	Each locally DC  mapping  $F:G \to Y$  is locally
  Lipschitz. 
	\item[{\rm (vi)}]
	If $Z$ is a finite-dimensional Banach space,  $H\subset Y$ is  open, $f: G \to Y$ is locally DC,  $g:  H\to Z$ is locally DC and $f(G) \subset H$, then $g \circ f$ is locally DC on $G$.
	\item[{\rm (vii)}]
	Let  $\dim X = \dim Y$, $H\subset Y$ be  open, and let $f:G \to H$ be a bilipschitz
	 locally DC mapping. Then $f^{-1}$ is locally DC on $H$.     
	\item[{\rm (viii)}] 
	If $f: C \to \R$ is  DC, $D \subset X$ is open convex such that
	$\overline D\subset C$ and  $\overline D$
	 is compact, then there exists a Lipschitz DC function $\tilde f: X \to \R$ such that
	 $\tilde f(x) = f(x),\ x \in \overline D$.
	\item[{\rm (ix)}]
Let $F_i: C \to Y$, $i=1,\dots,m$, be DC mappings. Let $F: C \to Y$ be a continuous mapping
 such that $F(x) \in \{F_1(x),\dots,F_m(x)\}$ for each $x \in C$. Then $F$ is DC on $C$.
\end{enumerate}
\end{lemma}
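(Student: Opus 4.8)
The plan is to reduce every vector-valued assertion to the real-valued case through the defining property together with (ii), and then to dispatch the scalar statements by the elementary calculus of convex functions, isolating the three genuinely nontrivial facts (iv), (vi), (vii). For the scalar part of (i), recall that the convex functions on $C$ form a cone closed under addition and multiplication by nonnegative scalars; writing $f=u_1-v_1$, $g=u_2-v_2$ with $u_i,v_i$ convex, the combination $af+bg$ is DC once the signs of $a,b$ are absorbed into the two convex summands, and this passes to $Y$-valued maps since $y^*\circ(af+bg)=a(y^*\circ f)+b(y^*\circ g)$. For the lattice part the identity
\[
\max(u_1-v_1,u_2-v_2)=\max(u_1+v_2,\,u_2+v_1)-(v_1+v_2)
\]
exhibits $\max(f,g)$ as a difference of convex functions, and $\min(f,g)=-\max(-f,-g)$. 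For (ii), expand $y^*=\sum_j a_jy_j^*$ in the given basis, so $y^*\circ F=\sum_j a_j(y_j^*\circ F)$ is DC by (i). Statement (v) is immediate: a convex function on an open subset of $\R^d$ is locally Lipschitz, hence so is a difference of two, and this passes to $Y$-valued maps coordinatewise.

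For (iii), if $F$ is $C^{1,1}$ with $\lip F'=L$ on a ball, then each $y^*\circ F$ has Lipschitz gradient, and adding $\tfrac{c}{2}|\cdot|^2$ with $c\ge\lip (y^*\circ F)'$ makes its gradient monotone, hence the sum convex; thus $y^*\circ F$ is locally DC and so is $F$. Affine maps have linear components and are DC outright. For the extension (viii), write $f=u-v$ on $C$; the restrictions $u|_{\overline D}$, $v|_{\overline D}$ are Lipschitz, and a Lipschitz convex function on a compact convex set extends to a globally Lipschitz convex function on $X$ (for instance as the supremum of its supporting affine functions, whose slopes stay bounded by the Lipschitz constant). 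The difference of these two extensions is the desired $\tilde f$.

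The composition (vi) reduces, via (viii) and a decomposition $g=g_1-g_2$ with $g_1,g_2$ convex and Lipschitz near $f(G)$, to showing that $g_1\circ f$ is DC. Writing $f=(f_1,\dots,f_n)$ with $f_i=P_i-Q_i$ convex and representing $g_1$ as the supremum of its affine minorants $y\mapsto\langle p,y\rangle+c$ (so that $|p|\le L:=\lip g_1$, whence each $|p_i|\le L$), one computes
\[
g_1\circ f + L\sum_i (P_i+Q_i)=\sup_{p,c}\Big(\sum_i (p_i+L)P_i+\sum_i (L-p_i)Q_i+c\Big).
\]
Since $p_i+L\ge 0$ and $L-p_i\ge 0$, each term inside the supremum is a nonnegative combination of the convex functions $P_i,Q_i$ plus a constant, hence convex, so the supremum is convex and $g_1\circ f$ is DC. This is precisely Hartman's convexification trick, and it is the clean heart of the composition statement.

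This leaves (iv), (vii) and (ix). Assertions (iv) (a locally DC function on a convex open set is globally DC) and (vii) (the inverse of a bilipschitz locally DC bijection between equidimensional spaces is locally DC) are the substantial classical results, due to Hartman and to the theory of delta-convex mappings of Veselý and Zajíček; I would cite these rather than reprove them. I expect (iv) to be the genuine obstacle: the local-to-global passage cannot be carried out by a naive partition of unity, since multiplying by cut-off functions destroys convexity, and this passage silently underlies the globality claims used above. Finally (ix) reduces to $Y=\R$ by (ii), since for $y^*\in Y^*$ the continuous function $y^*\circ F$ takes at each point one of the values $(y^*\circ F_i)(x)$. In the scalar case one argues by induction on $m$ using the $\max/\min$ calculus of (i): away from the coincidence sets the $F_i$ are pairwise distinct, so continuity forces $F$ to agree locally with a single $F_i$ and hence to be locally DC, while the coincidence sets are absorbed by the inductive hypothesis, after which (iv) upgrades local to global DC. The bookkeeping along these coincidence sets is the only delicate point of (ix).
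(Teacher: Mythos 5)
Most of your proposal is sound and, where it matters, follows the same route as the paper: the paper's own ``proof'' of this lemma is essentially a list of citations (Tuy for (i), Vesel\'y--Zaj\'{\i}\v{c}ek for (iii), (vi), (vii), (ix), Hartman for (iv), a short extension argument for (viii)), and you correctly identify (iv) and (vii) as the classical results to be cited rather than reproved. Your added detail for the items the paper only references is correct: the $\max$ identity for (i), the $\tfrac{c}{2}|\cdot|^2$ regularization for (iii), the inf/sup-of-affine-minorants extension for (viii), and in particular the convexification identity for (vi) is exactly Hartman's argument and the computation checks out (the coefficients $p_i+L$ and $L-p_i$ are nonnegative, so each competitor in the supremum is convex and the supremum is finite, being equal to $g_1\circ f+L\sum_i(P_i+Q_i)$).

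The one genuine gap is in (ix). Your induction handles a point $x$ only when the set $I=\{i:F_i(x)=F(x)\}$ is a proper subset of $\{1,\dots,m\}$ (continuity then confines $F$ near $x$ to the $F_i$ with $i\in I$, and the inductive hypothesis applies). At a point where \emph{all} the values $F_1(x),\dots,F_m(x)$ coincide, no reduction in $m$ occurs, $F$ need not agree locally with any single $F_i$ (already for $m=2$ and $d\geq 2$, $F$ can equal $F_1$ on some components of $\{F_1\neq F_2\}$ accumulating at $x$ and $F_2$ on others), and ``the coincidence sets are absorbed by the inductive hypothesis'' is precisely what fails there. This case is the entire content of the Mixing Lemma, and its proof in \cite[Lemma~4.8]{VZ} is a genuinely different and nontrivial argument, not a local patching; the paper avoids the issue by citing that lemma, which is what you should do here as well (consistently with your own treatment of (iv) and (vii)). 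Note also that your final appeal to (iv) to pass from locally DC to DC is harmless but would itself rest on Hartman's theorem, so nothing in (ix) is elementary once the coincidence points are present.
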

\begin{proof}
Property (i) follows easily from definitions, see e.g. \cite[p. 84]{Tuy}, and (ii) follows from (i). For (iii), see e.g. \cite[Proposition 1.11]{VZ}.
Property (iv) was proved in \cite{Har}.
 Property (v) follows from the local Lipschitzness of  convex functions. 
Statement (vi) is ``Hartman's superposition theorem'' from \cite{Har}; for the proof see also \cite{Tuy} or 
  \cite[Theorem 4.2.]{VZ}. Assertion (vii) follows from   \cite[Theorem 5.2.]{VZ}.
	To show (viii), write $f=g-h$, where $g$, $h$ are convex on $C$, and observe that $g$ and $h$
 are bounded on some open convex $E$ with $\overline D\subset E\subset C$. Therefore
 (see e.g. \cite[Fact 1.6]{VZ3}) $g$ and $h$ are Lipschitz on $D$ and  $g|_D$, $h|_D$
 have Lipschitz convex extensions $\tilde g$, $\tilde h$ to all $X$. Thus we can set
 $\tilde f\coloneqq  \tilde g - \tilde h$.
	The assertion (ix) is a special case of \cite[Lemma 4.8.]{VZ} (``Mixing lemma'').
\end{proof}

\begin{lemma}\label{strict}
Let $F: (a,b) \to \R^d$ be a DC mapping and $x\in (a,b)$. Then the one-sided derivatives $F'_{\pm}(x)$ exist. Moreover
\begin{equation}\label{limder}
 \lim_{t\to x+} F'_{\pm}(t)=F'_+(x)\ \ \text{and}\ \       \lim_{t\to x-} F'_{\pm}(t)=F'_-(x)
\end{equation}
which implies that $F'_+(a)$ is the strict right derivative of $F$ at $x$, i.e.,
\begin{equation}\label{strd}
 \lim_{\substack{(y,z) \to (x,x)\\ y \neq z,\, y\geq x,\, z\geq x}}  \ \frac{F(z) - F(y)}{z-y} \ = \ F'_+(x).
 \end{equation}
\end{lemma}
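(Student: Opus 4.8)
The plan is to reduce everything to the scalar case and then invoke the one-variable theory of convex functions. By Lemma~\ref{vldc}(ii) applied to the coordinate functionals $e_1^*,\dots,e_d^*$, which form a basis of $(\R^d)^*$, the mapping $F$ is DC if and only if each component $f_i \coloneqq e_i^* \circ F$ is a real-valued DC function on $(a,b)$; moreover the one-sided derivatives, their limits, and the strict derivative of $F$ are obtained by assembling the corresponding scalar quantities componentwise. Hence it suffices to prove the three assertions for a single DC function $f = g - h$ on $(a,b)$, where $g,h$ are convex, and then reassemble.

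First I would record the standard facts about a convex function $g$ of one real variable: the one-sided derivatives $g'_\pm(t)$ exist at every interior point, they are nondecreasing, and they satisfy $g'_-(s) \le g'_+(s) \le g'_-(t) \le g'_+(t)$ whenever $s < t$; in particular $g'_+$ is right-continuous and $g'_-$ is left-continuous. It follows at once that $f'_\pm(x) = g'_\pm(x) - h'_\pm(x)$ exist, which is the first assertion. For \eqref{limder}, the sandwich $g'_+(x) \le g'_-(t) \le g'_+(t)$ for $t > x$ together with $g'_+(t) \to g'_+(x)$ (right-continuity) squeezes $g'_-(t) \to g'_+(x)$ as well; writing the same for $h$ and subtracting gives $\lim_{t\to x+} f'_\pm(t) = g'_+(x) - h'_+(x) = f'_+(x)$, and the symmetric argument (using left-continuity of $g'_-$ and the sandwich for $t<x$) handles the left limits. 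Reassembling the components yields \eqref{limder} for $F$.

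Finally I would deduce the strict derivative \eqref{strd} from \eqref{limder}. Since $F$ is locally DC it is locally Lipschitz (Lemma~\ref{vldc}(v)), hence absolutely continuous on compact subintervals, so $F(z) - F(y) = \int_y^z F'(t)\,dt$ whenever $[y,z] \subset (a,b)$; here $F'(t)$ exists for all but countably many $t$, and where it exists it coincides with $F'_\pm(t)$. Fixing $\ep > 0$ and choosing $\delta > 0$ so that $|F'_\pm(t) - F'_+(x)| < \ep$ for $t \in (x, x+\delta)$ (possible by \eqref{limder}), I would estimate, for $x \le y < z < x+\delta$,
\[
\left| \frac{F(z)-F(y)}{z-y} - F'_+(x)\right| \le \frac{1}{z-y}\int_y^z |F'(t) - F'_+(x)|\,dt \le \ep .
\]
Because the difference quotient is symmetric in $y$ and $z$, the restriction $y < z$ loses no generality, so this shows the quotient is within $\ep$ of $F'_+(x)$ once $y,z$ are close enough to $x$, which is precisely \eqref{strd}.

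The only genuinely substantive point is the limit relation \eqref{limder}: the equality $\lim_{t\to x+} g'_-(t) = g'_+(x)$ (rather than the more obvious $\lim_{t\to x+} g'_+(t) = g'_+(x)$) relies on the monotonicity sandwich for convex functions, not merely on one-sided continuity. Everything else is routine bookkeeping, namely the componentwise reduction and the elementary integral averaging argument.
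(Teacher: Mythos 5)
Your proof is correct and follows essentially the same route as the paper, which simply cites \cite{VZ2} and remarks that everything reduces to the easy case of a scalar convex function; you carry out that reduction explicitly (componentwise splitting via Lemma~\ref{vldc}(ii), the monotonicity sandwich $g'_+(x)\le g'_-(t)\le g'_+(t)$ for one-sided derivatives of a convex function, and the integral-averaging step for \eqref{strd}). The only difference is that you supply the details the paper delegates to the reference, and all of those details check out.
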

\begin{proof}
The assertion easily follows from \cite[Theorem B, p. 325]{VZ2} and 
\cite[p.~329, Proposition 3.4~(i), (ii)]{VZ2}.
(Let us note that \cite{VZ2} works with Banach space valued $F$. In our case it is sufficient to observe that the assertion is easy if $n=1$ and $F$ is convex, and that the general case follows from this special case.)
\end{proof}

In the following, we will extensively work with DC surfaces in Euclidean spaces and {\it in 
 their subspaces}. So we will define DC surfaces in finite-dimensional Hilbert spaces only.
 The main notion for us is a $k$-dimensional DC surface in a $d$-dimensional Hilbert space
  which is given ``explicitely'' (i.e., as a ``graph'' of a DC mapping). From this reason
	 we use the term ``DC manifold''  for sets which are locally DC surfaces in our sense.
	
	In the rest of this subsection, the symbols $X$, $V$, $W$ always denote Hilbert spaces
	 of finite positive dimension.
	
\begin{definition}\label{plochy}
\begin{enumerate}
\item[(a)]
Let $d\coloneqq \dim X $, $A \subset X$ and $0<k<d$. We say that $A$ is a {\it DC surface} in $X$ of dimension $k$, if there exists a $k$-dimensional space $W \subset X$  and a Lipschitz DC mapping $\vf: W \to V\coloneqq  W^{\perp}$ such that $A = \{w+\vf(w): \ w \in W\}$. Then we will also say that $A$ is a DC  surface {\it associated with $V$}. A DC hypersurface (in $\R^d$) is a DC surface of dimension $d-1$.
\item[(b)]
We say that $\varnothing \neq A \subset \R^d$ is a  {\it DC} ({\it Lipschitz}) {\it manifold} of dimension $k$ ($0<k<d$), if for each $a \in A$ there exist  a $k$-dimensional vector space   $W \subset \R^d$, an open ball $U$ in $W$ and  a DC (Lipschitz, respectively) mapping $\vf:U \to W^{\perp}$ such that $P\coloneqq  \{w + \vf(w):\ w \in U\}$ is a relatively  open subset of $A$ and $a\in P$. 
\item[(c)]
For formal reasons, by a DC surface (resp.\ DC manifold) of dimension $k=d$ in $X$ we mean the whole space $X$ (resp.\ a nonempty open subset of $X$), and by a DC surface (resp.\ DC manifold) of dimension $k=0$ we mean a singleton (resp.\ a nonempty isolated set) in $X$.
\end{enumerate}
\end{definition}
	
\begin{remark}\label{uzpl}
\begin{enumerate}
\item [(i)]
It is easy to see that each DC surface  in $X$ is  closed in $X$, but a DC manifold in $X$
 can be non-closed.
\item[(ii)]
Each $k$-dimensional $C^{1,1}$ manifold in $X$ is a $k$-dimensional DC manifold.
\end{enumerate}
\end{remark}
		
We will need the following lemmas which  are straightforward consequences of	Lemma \ref{vldc}.
\begin{lemma}\label{sloz}
Let $d\coloneqq \dim X $,  $0<k<d$ and  $W\subset X$ be a $k$-dimensional space. Let $\vf: W \to W^{\perp}$ be a Lipschitz DC mapping
 and $P$ be a DC surface of dimension $k-1$ in $W$. Then $Q\coloneqq  \{w+ \vf(w):\ w \in P\}$
 is a DC surface of dimension $k-1$ in $X$.
\end{lemma}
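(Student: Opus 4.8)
The plan is to reparametrize $Q$ as an explicit graph over a single $(k-1)$-dimensional subspace of $X$ and then read off that the associated map is Lipschitz and DC directly from Lemma~\ref{vldc}. First I would dispose of the boundary case: if $k=1$ then $k-1=0$, so by Definition~\ref{plochy}(c) $P=\{p\}$ is a singleton in $W$, whence $Q=\{p+\vf(p)\}$ is a singleton in $X$, i.e.\ a $0$-dimensional DC surface, and there is nothing more to prove. So assume $k-1\geq 1$. Applying Definition~\ref{plochy}(a) inside the Hilbert space $W$, I obtain a $(k-1)$-dimensional subspace $U\subset W$ and a Lipschitz DC mapping $\psi\colon U\to W\cap U^{\perp}$ (where $\dim(W\cap U^{\perp})=1$) with $P=\{u+\psi(u):\ u\in U\}$.

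Since every $w\in P$ equals $u+\psi(u)$ for a unique $u\in U$, I would rewrite
\begin{equation*}
Q=\{\,u+\psi(u)+\vf(u+\psi(u)):\ u\in U\,\}=\{\,u+\eta(u):\ u\in U\,\},\qquad \eta(u):=\psi(u)+\vf\bigl(u+\psi(u)\bigr).
\end{equation*}
The point to check is that $\eta$ takes values in the orthogonal complement $U^{\perp}$ of $U$ taken in $X$. This follows from the orthogonal splitting $X=U\oplus(W\cap U^{\perp})\oplus W^{\perp}$, which gives $U^{\perp}=(W\cap U^{\perp})\oplus W^{\perp}$: since $\psi(u)\in W\cap U^{\perp}$ and $\vf(u+\psi(u))\in W^{\perp}$, indeed $\eta(u)\in U^{\perp}$ for every $u$. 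Thus $Q$ is exhibited as the graph over the $(k-1)$-dimensional subspace $U\subset X$ of the map $\eta\colon U\to U^{\perp}$.

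It then remains to verify that $\eta$ is Lipschitz and DC, which is where I would invoke Lemma~\ref{vldc}. The inclusion $U\hookrightarrow W$ is affine, hence DC by part (iii), so $u\mapsto u+\psi(u)$ is a Lipschitz DC mapping of $U$ into $W$ by part (i); composing it with the Lipschitz DC mapping $\vf\colon W\to W^{\perp}$ and using the superposition theorem (vi) together with (iv) (as $U$ is convex) shows $u\mapsto\vf(u+\psi(u))$ is DC, and it is Lipschitz as a composition of Lipschitz maps. Regarding both summands as maps into the common codomain $U^{\perp}$, I conclude by part (i) that $\eta=\psi+\vf\circ(\mathrm{id}_U+\psi)$ is Lipschitz DC, so Definition~\ref{plochy}(a) presents $Q$ as a DC surface of dimension $k-1$ in $X$. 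The only genuinely delicate step is the three-way orthogonal bookkeeping guaranteeing that $\eta$ lands in $U^{\perp}$ and not merely in $W$; everything else is a routine appeal to the closure properties collected in Lemma~\ref{vldc}.
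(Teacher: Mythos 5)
Your proof is correct and follows essentially the same route as the paper: both write $P$ as a graph over a $(k-1)$-dimensional subspace $U\subset W$ (the paper calls it $Z$), form the composite map $\eta(u)=\psi(u)+\vf(u+\psi(u))$, observe via the orthogonal splitting $U^{\perp}=(W\cap U^{\perp})\oplus W^{\perp}$ that it lands in $U^{\perp}$, and invoke the closure properties of Lemma~\ref{vldc} to conclude. The paper's version is just more terse about the orthogonal bookkeeping, which you spell out explicitly.
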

\begin{proof}
The case $k=1$ is trivial. If $k>1$, then there exist a space $Z\subset W$ of dimensions $k-1$  and a Lipschitz DC mapping $\psi: Z \to Z^{\perp} \cap W$ such that 
 $P= \{z+ \psi(z):\ z \in Z\}$. 
 Then  $Q= \{z + \psi(z) + \vf(z + \psi(z)):\ z \in Z\}$. Note that also $\psi: Z \to Z^{\perp}$ is DC by
 Definition \ref{dc}~(ii). Consequently the
  mapping $\vf^*: Z \to Z^{\perp}$,\ $\vf^*(z)= \psi(z) + \vf(z + \psi(z))$ is Lipschitz and DC
 by Lemma \ref{vldc}~(i), (iv), (vi), and the assertion follows.
\end{proof}

\begin{lemma}\label{L:twoDCgraphs}
	Let $0<k<d$, $W$ and $Z$ be two $k$-dimensional subspaces of $\er^d$ and let $G\subset W$ and $H\subset Z$ be open.
	Suppose that there are two Lipschitz functions $f:G\to W^\bot$ and $g:H\to Z^\bot$ 
	such that $\{s+f(s):s\in G\}=\{t+g(t):t\in H\}\eqqcolon  S$ and suppose that $f$ is locally DC.
	Then $g$ is locally DC.
\end{lemma}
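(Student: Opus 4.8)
The plan is to realize both representations of $S$ as graphs via parametrizing maps and then to pass from one parametrization to the other through a transition map, to which the DC calculus collected in Lemma~\ref{vldc} applies directly. Accordingly, I would first introduce the parametrizations $\Phi\colon G\to\er^d$, $\Phi(s)=s+f(s)$, and $\Psi\colon H\to\er^d$, $\Psi(t)=t+g(t)$, so that $\Phi(G)=\Psi(H)=S$. Since $S$ is a Lipschitz graph over $W$ (resp.\ over $Z$), the coordinate projections restricted to $S$ invert the parametrizations: for every $x=s+f(s)\in S$ one has $\pi_W(x)=s$, hence $\Phi\circ(\pi_W|_S)=\mathrm{id}_S$ and $\pi_W\circ\Phi=\mathrm{id}_G$, and symmetrically $\pi_Z\circ\Psi=\mathrm{id}_H$. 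Both $\Phi$ and $\Psi$ are Lipschitz (because $f$, $g$ are), and $\Phi$ is locally DC on $G$, being the sum of the linear inclusion $W\hookrightarrow\er^d$ (DC by Lemma~\ref{vldc}(iii)) and the locally DC map $f$ (Lemma~\ref{vldc}(i)).

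Next I would define the transition map $\tau\coloneqq\pi_Z\circ\Phi\colon G\to Z$. As the composition of the locally DC map $\Phi$ with the linear, hence DC, projection $\pi_Z$, the map $\tau$ is locally DC by Lemma~\ref{vldc}(vi). The crucial point is to verify that $\tau$ is a bilipschitz bijection of $G$ onto $H$. For $s\in G$ we have $\Phi(s)\in S=\Psi(H)$, so $\Phi(s)=\Psi(t)$ for a unique $t\in H$, and applying $\pi_Z$ gives $t=\pi_Z(\Phi(s))=\tau(s)$; thus $\Phi(s)=\Psi(\tau(s))$ and $\tau(s)\in H$. Injectivity of $\tau$ then follows from that of $\Phi$, and surjectivity onto $H$ follows since each $\Psi(t)\in S=\Phi(G)$ yields an $s$ with $\tau(s)=t$. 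The inverse is given explicitly by $\tau^{-1}=\pi_W\circ\Psi$, which is Lipschitz because $g$ is; together with the Lipschitz property of $\tau$ this shows $\tau$ is bilipschitz. Since $\dim W=\dim Z=k$, Lemma~\ref{vldc}(vii) now yields that $\tau^{-1}\colon H\to W$ is locally DC.

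Finally, from $\Phi(\tau^{-1}(t))=\Phi(\pi_W(\Psi(t)))=\Psi(t)$ for $t\in H$ I would read off the formula $g(t)=\Psi(t)-t=\Phi(\tau^{-1}(t))-t$. The composition $\Phi\circ\tau^{-1}\colon H\to\er^d$ is locally DC by Lemma~\ref{vldc}(vi) (the image $\tau^{-1}(H)=G$ being exactly the domain of $\Phi$), and the map $t\mapsto t$ is the affine inclusion $H\hookrightarrow Z\subset\er^d$, DC by Lemma~\ref{vldc}(iii); hence $g$ is locally DC as a difference of locally DC maps by Lemma~\ref{vldc}(i). Equivalently, since $t\in Z$ and $g(t)\in Z^\bot$, one may write $g(t)=\pi_{Z^\bot}(\Phi(\tau^{-1}(t)))$ and invoke Lemma~\ref{vldc}(vi) once more. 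This gives the desired conclusion that $g$ is locally DC.

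The only genuinely non-formal step is establishing that the transition map $\tau$ is a bilipschitz bijection between $G$ and $H$; everything else is a mechanical application of the composition and inversion rules of Lemma~\ref{vldc}. This step rests entirely on the hypothesis that $S$ is \emph{simultaneously} a Lipschitz graph over $W$ and over $Z$, which is precisely what makes both coordinate projections bijective on $S$ with Lipschitz inverses, and it is here that the Lipschitz assumption on $g$ (not just $f$) is used.
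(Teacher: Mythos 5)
Your argument is correct and coincides with the paper's own proof: your transition map $\tau=\pi_Z\circ\Phi$ is exactly the map $F=\pi_Z\circ\phi$ used there, and the final identity $g=\pi_{Z^\perp}\circ\Phi\circ\tau^{-1}$ together with Lemma~\ref{vldc}~(i), (iii), (vi), (vii) is precisely the paper's conclusion. You merely spell out the bilipschitz bijectivity of the transition map in more detail.
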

\begin{proof}
	Define $\phi:G\to\er^d$ by $\phi(s)=s+f(s)$ and $\psi:H\to\er^d$ by $\psi(t)=t+g(t)$.
	Then both $\phi^{-1}= \pi_W |_S$ and $\psi^{-1} = \pi_Z |_S$ are Lipschitz.
	So $F:G \to H$, $F\coloneqq  \psi^{-1} \circ \phi= \pi_Z \circ \phi$, is Lipschitz and also locally DC by Lemma \ref{vldc}~(i), (iii), (vi). Since $F^{-1}:H \to G$, $F^{-1}= \pi_W \circ \psi$ is Lipschitz, 
	Lemma \ref{vldc}~(vii) implies that $F^{-1}$ is locally DC, and so
		$g= \pi_{Z^{\perp}} \circ \phi \circ F^{-1}$ is locally DC as well.
\end{proof}

\subsection{WDC sets}
\begin{definition} \rm
Let $U\subset\R^d$ be open and $f:U\to\R$ be locally Lipschitz. A number $c\in \R$ is called a
\begin{enumerate}
\item[(i)] {\it regular value} of $f$ if $0\not\in\partial f(x)$ for all $x\in U$ such that $f(x)=c$;
\item[(ii)] {\it weakly regular value} of $f$ if whenever $x_i\to x$, $f(x_i)>c=f(x)$ and $u_i\in\partial f(x_i)$ for all $i\in\N$ then $\liminf_i|u_i|>0$.
\end{enumerate}
\end{definition}

\begin{remark} \label{rem_w_reg}\rm
\begin{enumerate}
\item[(i)] If $c$ is a regular value of $f$ (notice that $c$ need not be in the range of $f$)   then $c$ is also weakly regular, by \eqref{uzcl}.
\item[(ii)] Let $f$ be locally Lipschitz on $\rd$, $c\in f(\rd)$ and $\{f\leq c\}$ compact. Then, it is easy to see that $c$ is a weakly regular value of $f$ if and only if there exists an $\ep>0$ such that 
\begin{equation}  \label{E_reg}
|u|\geq\ep \text{ whenever } u\in\partial f(x) \text{ for some } x \text{ with } 0<\dist(x,\{f\leq c\})<\ep.
\end{equation}
\item[(iii)] If $f,c$ are as in (ii) and, moreover, $f$ is {\it proper} (i.e., $f^{-1}(K)$ is compact whenever $K\subset\R$ is compact), then $c$ is a weakly regular value of $f$ if and only if there exists an $\ep>0$ such that 
\begin{equation}  \label{E_reg_prop}
|u|\geq\ep \text{ whenever } u\in\partial f(x) \text{ for some } x \text{ with } c<f(x)<c+\ep.
\end{equation}
\end{enumerate}
\end{remark}

\begin{definition}[WDC set]  \rm
A set $A\subset\rd$ is called {\it WDC} if there exists a DC function $f:\rd\to [0,\infty)$ such that $A=f^{-1}\{0\}$ and $0$ is a weakly regular value of $f$. 
In such a case, we call $f$ a {\it DC aura} (for $A$).
\end{definition}

Notice that $\varnothing$ is a WDC set by our definition.

\begin{definition}[Locally WDC set]  \label{lwdc}\rm
A set $A\subset\rd$ is called {\it locally WDC} if for any point $a\in A$ there exists a WDC set $A^*\subset\R^d$ that agrees with $A$ on an open neighbourhood of $a$.
\end{definition}

\begin{remark}\label{2.2} \rm
\begin{enumerate}
\item[(i)] Equivalently, we can say that $A$ is WDC iff $A=f^{-1}((-\infty,c])$ for a DC function $f$ with weakly regular value $c$.
\item[(ii)] WDC sets were introduced in \cite{PR13} under the compactness assumption and the definition was extended to the Riemannian setting in \cite{FPR15}. Locally WDC sets were defined in \cite{PR13} with a formally stronger requirement of local agreement with compact WDC sets; nevertheless, it follows from Proposition~\ref{P_local} that both definitions are equivalent.
\item[(iii)] Auras were defined originally by Fu \cite{Fu94} as nonnegative {\it Monge-Amp\`ere} functions with weakly regular value $0$. 
Since any DC function is Monge-Amp\`ere (\cite[Theorem~1.1]{PR13}), a DC aura is an aura in Fu's sense.
We shall sometimes use the analogy and call a {\it Lipschitz aura} a nonnegative Lipschitz function with weakly regular value $0$ (though, of course, such a function need not be Monge-Amp\`ere).
\item[(iv)] It was shown in \cite{PR13} that compact {\it locally} WDC sets admit normal cycles and curvature measures. Normal cycles are defined locally, hence, the compactness assumption can be relaxed and both normal cycles and curvature measures can be defined even for {\it closed} locally WDC sets, and the main properties remain true (in particular, the local principal kinematic formula, cf.\ \cite{FPR15}, Theorem~B and \S5.1~(3)). Therefore, we are mainly interested in the (larger) class of closed locally WDC sets and its properties. On the other hand, it seems to be plausible that any closed locally WDC set is WDC (cf.\ \cite[Problem~10.2]{PR13}).
\item[(v)] Using the well-known fact that each Lipschitz convex function defined on a closed ball has a Lipschitz convex extension to the whole space, one can easily show that any {\it compact} WDC set $M$ admits a DC aura which is Lipschitz and, consequently,  satisfies \eqref{E_reg} for $c=0$ and some $\ep>0$. Since a positive multiple of a DC aura is again a DC aura, we can find a $1$-Lipschitz DC aura for $M$. Moreover, there even exists a $1$-Lipschitz DC aura for $M$ which is {\it proper} (indeed, consider the maximum of a $1$-Lipschitz DC aura for $M$ with the distance from an open ball containing $M$).
\end{enumerate}
\end{remark} 

We will consider also locally defined DC auras. Let $\varnothing\neq A\subset\R^d$ be closed. Given an open convex set $C\subset\R^d$, we say that a DC function $f:C\to [0,\infty)$ is a {\it DC aura in $C$} (for $A$) if $A\cap C=f^{-1}(\{0\})$ and $0$ is a weakly regular value of $f$. Such a locally defined DC aura cannot be extended in general to a DC aura (on $\R^d$), nevertheless, we show a related weaker result (Lemma~\ref{L_local}). As a corollary we obtain a characterization of locally WDC sets by ``local'' auras (Proposition~\ref{P_loc_aura}). Also, we show that any (locally) WDC set agrees locally with a {\it compact} WDC set (Proposition~\ref{P_local}). 

\begin{lemma}  \label{L_local}
Let $a\in\R^d$, $r\in (0,\infty]$ and let $f:B(a,r)\to[0,\infty)$ be a DC aura in $B(a,r)$ with $f(a)=0$ (here $B(a,\infty)=\R^d$). Then there exists $0<s<r$ and a compact WDC set $A^*\subset B(a,r)$ such that
$$A^*\cap B(a,s)=f^{-1}\{0\}\cap B(a,s).$$
If $r=\infty$ then $A^*$ with the above property can be found to any given $s>0$.
\end{lemma}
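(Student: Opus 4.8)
The plan is to manufacture a global DC aura $g\colon\rd\to[0,\infty)$ whose zero set is compact, coincides with $f^{-1}\{0\}$ near $a$, and for which $0$ is still a weakly regular value; then $A^*:=g^{-1}\{0\}$ will do the job. Write $Z:=f^{-1}\{0\}$. First I fix radii $0<s<t<r$ with $\overline{B(a,t)}\subset B(a,r)$ (automatic when $r=\infty$). Using Lemma~\ref{vldc}~(viii) I extend $f|_{\overline{B(a,t)}}$ to a Lipschitz DC function $\tilde f\colon\rd\to\R$ with $\tilde f=f$ on $\overline{B(a,t)}$, so in particular $\tilde f\ge0$ there. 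Let $h(x):=\dist(x,\overline{B(a,s)})=\max(|x-a|-s,0)$, which is convex (hence DC by Lemma~\ref{vldc}~(iii)), nonnegative, and vanishes exactly on $\overline{B(a,s)}$. I set $g:=\max(\tilde f,h)$, which is DC and Lipschitz by Lemma~\ref{vldc}~(i), and satisfies $g\ge h\ge0$.

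One then checks the zero set. On $B(a,s)$ we have $h=0$ and $\tilde f=f\ge0$, so $g=f$ and $g^{-1}\{0\}\cap B(a,s)=Z\cap B(a,s)$. Outside $\overline{B(a,s)}$ we have $h>0$, hence $g>0$. Therefore $g^{-1}\{0\}\subset\overline{B(a,s)}$ is compact and contained in $B(a,r)$, and $A^*:=g^{-1}\{0\}$ satisfies $A^*\cap B(a,s)=Z\cap B(a,s)$, as required; for $r=\infty$ one simply takes $\tilde f=f$ and any prescribed $s>0$. It remains to prove that $0$ is a weakly regular value of $g$, for then $g$ is a DC aura and $A^*$ is WDC.

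For weak regularity I use the criterion of Remark~\ref{rem_w_reg}~(ii): it suffices to find $\eps>0$ with $|u|\ge\eps$ whenever $u\in\partial g(x)$ and $0<\dist(x,A^*)<\eps$. Suppose this fails; then there are $x_i\to x_*\in A^*$, $g(x_i)>0$, and $u_i\in\partial g(x_i)$ with $u_i\to0$. Since $g(x_*)=0$, both $\tilde f(x_*)=0$ (so $x_*\in Z$) and $h(x_*)=0$ (so $|x_*-a|\le s$). Because $|x_*-a|\le s<t$, the $x_i$ eventually lie in $B(a,t)$, where $\tilde f=f$ and $\partial\tilde f=\partial f$. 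Passing to a subsequence and using $\partial g(x_i)\subset\conv(\partial\tilde f(x_i)\cup\partial h(x_i))$, I split according to which functions are active. If only $h$ is active, then $u_i\in\partial h(x_i)$ is a unit vector, contradicting $u_i\to0$. If only $\tilde f$ is active, then $u_i\in\partial f(x_i)$ with $f(x_i)>0$ and $x_i\to x_*\in Z$, and weak regularity of $f$ forces $\liminf|u_i|>0$, again a contradiction.

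The remaining, and main, obstacle is the tie case, where $f(x_i)=h(x_i)=g(x_i)>0$, so that $|x_i-a|>s$ and $u_i=\lambda_i v_i+(1-\lambda_i)w_i$ with $v_i\in\partial f(x_i)$, $w_i=(x_i-a)/|x_i-a|$ the outward unit vector, and $\lambda_i\in[0,1]$. A priori such a convex combination could tend to $0$ if $v_i$ were to point radially inward and cancel $w_i$; excluding this is the crux. Here I would exploit two facts about these tie points: they lie near $A^*$, and they satisfy the distance identity $f(x_i)=\dist(x_i,\overline{B(a,s)})$. Weak regularity of $f$ provides (by a compactness argument on $\overline{B(a,t)}$) a uniform lower bound $|v_i|\ge\eps_0>0$, since $f(x_i)>0$ and $\dist(x_i,Z)\to0$. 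The heart of the proof is to show that the distance identity, together with $f(x_*)=0$ at the nearby limit point, prevents $v_i$ from being asymptotically antiparallel to $w_i$ — morally, $f$ cannot be strictly decreasing in the outward radial direction along the tie set — so that $\langle v_i,w_i\rangle$ stays bounded below. Granting this, $|u_i|^2\ge\lambda_i^2\eps_0^2+(1-\lambda_i)^2+2\lambda_i(1-\lambda_i)\langle v_i,w_i\rangle\ge c>0$, contradicting $u_i\to0$ and completing the verification. Thus the construction and the two easy cases are routine consequences of Lemma~\ref{vldc} and Remark~\ref{rem_w_reg}, while the quantitative control of near-seam tie subgradients of $f$ is where the real work lies.
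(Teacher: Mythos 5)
Your construction and the two ``easy'' cases are fine, but the tie case --- which you yourself flag as ``the heart of the proof'' and then only sketch with ``Granting this'' --- is a genuine gap, and the claim you would need there is simply false for a prescribed cutoff radius. Weak regularity of $f$ controls only the \emph{size} of nearby subgradients, not their \emph{direction}, so nothing prevents $\partial f$ from pointing radially inward along the tie set. Concretely, take $d=2$, $a=0$, $s=1$ and $f(p)=\min\bigl(|p|,\max(1-x,0)\bigr)$ for $p=(x,y)$: this is a global DC aura for $Z=\{0\}\cup\{(x,y):x\geq 1\}$ with $f(0)=0$ (every Clarke subgradient at a point off $Z$ with $f<\tfrac12$ is a unit vector). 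Near $x_*=(1,0)\in Z\cap\partial B(0,1)$ one has $f(p)=1-x$ and $h(p)=|p|-1$, which agree exactly on the parabola $x=1-y^2/4$; at the tie points $p_\ep\coloneqq(1-\ep,2\sqrt\ep)$ one gets $|p_\ep|=1+\ep$, $g(p_\ep)=\ep>0$, and
$$\partial g(p_\ep)\supset\conv\Bigl\{(-1,0),\ \tfrac{p_\ep}{1+\ep}\Bigr\}\ni u_\ep\coloneqq\tfrac12\Bigl((-1,0)+\tfrac{p_\ep}{1+\ep}\Bigr)=\tfrac{1}{1+\ep}\bigl(-\ep,\sqrt{\ep}\,\bigr)\longrightarrow 0,$$
while $p_\ep\to x_*\in g^{-1}\{0\}$. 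Hence $0$ is not a weakly regular value of $g=\max(\tilde f,h)$ and your $A^*$ is not WDC. (The same example kills $f+h$ in place of the max.)

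This cancellation is precisely the ``weak touching'' phenomenon that the paper's proof is built to exclude, and avoiding it is where all the actual work lies: the paper shows (via Proposition~7.1 and Lemma~8.4 of \cite{PR13}) that the affine subspaces weakly tangent to $f$ form a null set, cuts with a \emph{polytope} $P$ whose faces are chosen in general position so that $\dist(\cdot,P)$ does not weakly touch $f$, combines the two auras by Lemma~\ref{L_f+g} (which requires exactly the non-touching hypothesis), and only at the very end takes a max with the distance to a ball containing the whole zero set in its interior, so that no tie points occur near $A^*$. To salvage your route you would have to prove a Sard-type statement that almost every cutoff radius $s'$ avoids weak touching between $f$ and $\dist(\cdot,\overline B(a,s'))$ --- and even then, for $r=\infty$ you cannot cut at the \emph{given} $s$ (as my example shows), only at a generic $s'>s$. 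As written, the proposal assumes the one nontrivial step rather than proving it.
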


The proof is based on a corrected version of \cite[Proposition~7.3]{PR13} whose proof, unfortunately, contains a gap (cf.\ also the remark after Proposition~4.1 in \cite{FPR15}).  

Given an open convex set $C\subset\R^d$ and a DC aura $f$ in $C$, we denote
$$\widetilde{\nor}f\coloneqq \left\{\left(x,\tfrac{u}{|u|}\right):\, x\in C,\,f(x)=0,\, 0\neq u\in\partial f(x)\right\}.$$
(Note that, in general, $\widetilde{\nor}f$ is a larger set than $\nor(f,0)$ used in \cite{PR13} in the case $C=\R^d$.)
We shall say that two DC auras $f,g$ in $C$ {\it touch weakly} provided that there exists $(x,v)\in\widetilde{\nor}f$ with $(x,-v)\in\widetilde{\nor}g$. 
The corrected version of \cite[Proposition~7.3]{PR13} reads as follows.

\begin{lemma}   \label{L_f+g}
If $f,g$ are two DC auras in $C$ that do not touch weakly then $f+g$ is a DC aura in $C$ as well.
\end{lemma}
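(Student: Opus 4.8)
The plan is to verify directly the three clauses in the definition of a DC aura in $C$, the last of which carries all the content. First I would observe that $f+g$ is DC by Lemma~\ref{vldc}~(i) and maps into $[0,\infty)$, and that, since both summands are nonnegative, $(f+g)^{-1}\{0\}=f^{-1}\{0\}\cap g^{-1}\{0\}$. Thus the only substantive task is to show that $0$ is a weakly regular value of $f+g$, and I would do this by contradiction, extracting a weak touching of $f$ and $g$ from a hypothetical failure of weak regularity.

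So suppose $0$ is \emph{not} weakly regular for $f+g$. Then there are $x_i\to x$ with $(f+g)(x_i)>0=(f+g)(x)$ and $u_i\in\partial(f+g)(x_i)$ with $\liminf_i|u_i|=0$; after passing to a subsequence I may assume $u_i\to 0$. The key calculus input I would invoke is the Clarke sum rule $\partial(f+g)(x_i)\subseteq\partial f(x_i)+\partial g(x_i)$ (valid for locally Lipschitz functions, see \cite{Cl90}), which lets me write $u_i=p_i+q_i$ with $p_i\in\partial f(x_i)$ and $q_i\in\partial g(x_i)$. Since $f$ and $g$ are locally Lipschitz, the sequences $(p_i)$ and $(q_i)$ are bounded near $x$ by the local Lipschitz constants, so after a further subsequence $p_i\to p$ and $q_i\to q$ with $p+q=\lim_i u_i=0$, i.e.\ $q=-p$.

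Next I would exploit the signs. From $f(x),g(x)\geq 0$ and $f(x)+g(x)=0$ I get $f(x)=g(x)=0$, and from $f(x_i),g(x_i)\geq 0$ and $f(x_i)+g(x_i)>0$ at least one of $f(x_i),g(x_i)$ is positive for each $i$. Passing to a subsequence and, if necessary, interchanging the roles of $f$ and $g$ (the weak touching relation is symmetric, as one checks by replacing $v$ with $-v$), I may assume $f(x_i)>0$ for all $i$. Then $x_i\to x$, $f(x_i)>0=f(x)$ and $p_i\in\partial f(x_i)$, so weak regularity of the aura $f$ forces $\liminf_i|p_i|>0$; hence $|p|=\lim_i|p_i|>0$ and therefore $q=-p\neq 0$. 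By the upper semicontinuity property \eqref{uzcl} I conclude $p\in\partial f(x)$ and $q=-p\in\partial g(x)$.

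Setting $v\coloneqq p/|p|$, I then have $x\in C$, $f(x)=0$ and $0\neq p\in\partial f(x)$, so $(x,v)\in\widetilde{\nor}f$; likewise $g(x)=0$ and $0\neq q\in\partial g(x)$ with $q/|q|=-v$, so $(x,-v)\in\widetilde{\nor}g$. Thus $f$ and $g$ touch weakly, contradicting the hypothesis, and $0$ is a weakly regular value of $f+g$ as required. The step I expect to require the most care is not any single estimate but the coordination of two things at once: invoking the Clarke sum rule to split $u_i$, and then extracting subsequences so that the \emph{individual} weak regularity of one aura (here $f$, on the set where it is positive) becomes applicable; once these are set up, the conceptual point — that a breakdown of weak regularity produces exactly a pair of opposite normals, i.e.\ a weak touching — makes the contradiction immediate.
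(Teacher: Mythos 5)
Your proposal is correct and follows essentially the same route as the paper's proof: contradiction, the Clarke sum rule $\partial(f+g)(x_i)\subset\partial f(x_i)+\partial g(x_i)$, reduction to the case $f(x_i)>0$, weak regularity of $f$ to keep the limit $p$ away from $0$, and closedness of the subgradient graphs to produce the weak touching $(x,v)\in\widetilde{\nor}f$, $(x,-v)\in\widetilde{\nor}g$. The only (immaterial) difference is the order in which you extract convergent subsequences versus invoke weak regularity.
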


\begin{proof}
Obviously, $f+g$ is a DC function; it remains to verify the weak regularity of $0$.
We shall do this by contradiction: assume that $f(x)=g(x)=0$, $f(x_i)+g(x_i)>0$, $x_i\to x$, $w_i\in\partial (f+g)(x_i)$, but $w_i\to 0$.
Since $\partial(f+g)(x)\subset\partial f(x)+\partial g(x)$ (see  \cite[Proposition 2.3.3]{Cl90}), there exist vectors $u_i\in\partial f(x_i)$ and $v_i\in\partial g(x_i)$ with $w_i=u_i+v_i$. We can assume without loss of generality that say $f(x_i)>0$ for all $i$. Then, by the weak regularity of $0$ for $f$, we can assume that there exists an $\ep>0$ such that $|u_i|\geq\ep$ for all $i$. Turning to a subsequence if necessary, we may assume that there exist $\lim_iu_i\eqqcolon u$ and $\lim_iv_i\eqqcolon v$. So $u\neq 0$ and  $v= -u$.  Since $\graph\partial f$ and $\graph\partial g$ are closed  by \eqref{uzcl}, we have $u\in\partial f(x)$ and $v\in\partial g(x)$. So, obviously, $f,g$ 
 touch weakly, which is a contradiction.
\end{proof}

\begin{proof}[Proof of Lemma~\ref{L_local}]
We can assume without loss of generality that $a=0$.
First we claim that the set
$$\{(v,x\cdot v)\in S^{d-1}\times\R:\, (x,-v)\in\widetilde{\nor}f\}$$
has $d$-dimensional measure zero. This is, in fact, Proposition~7.1 from \cite{PR13} with a restricted domain of $f$ and $\nor(f,0)$ replaced by $\widetilde{\nor}f$, and the proof given in \cite{PR13} works also in our setting. Further, let us say that a $k$-dimensional affine subspace $F$ of $\R^d$ is {\it weakly tangent} to the DC aura $f$ in $C$ if there exists $(x,v)\in\widetilde{\nor}f$ such that $x\in F$ and $v\perp F$. As in Lemma~8.4 of \cite{PR13}, it can be shown that for any $0\leq k\leq d-1$, the set of affine $k$-subspaces weakly tangent to a DC aura $f$ in $C$ has measure zero, with respect to the motion invariant measure. 
(In fact, Lemma~8.4 in \cite{PR13} is stated only for $1\leq k\leq d-1$, but the proof given there works for $k=0$ as well; moreover, the statement with $k=0$ reads that $\partial f^{-1}\{0\}$ has $d$-dimensional measure zero, which clearly follows from \cite[Proposition~7.1]{PR13}.)
Hence, by \cite[Lemma~8.5]{PR13}, there exist linearly independent unit vectors $v_1,\ldots,v_d$ in $\R^d$ and dense sets $T_1,\ldots,T_d\subset \R$ such that the affine $k$-subspace
$$\{x\in\R^d:\, x\cdot v_{i_1}= t_{i_1},\ldots,x\cdot v_{i_{d-k}}= t_{i_{d-k}}\}$$
is not weakly tangent to $f$ whenever $t_i\in T_i$, $i=1\ldots,d$, $0\leq k\leq d-1$ and $1\leq i_1<\dots <i_{d-k}\leq d$.
The set $A^*$ can be found now as the intersection of $f^{-1}\{0\}$ with a polytope
$$P\coloneqq \{x\in\R^d:\, -s_1\leq x\cdot v_1\leq t_1,\ldots, -s_d\leq x\cdot v_d\leq t_d\}$$
with $s_i, t_i >0$ and $-s_i,t_i\in T_i$, $i=1,\ldots,d$. (Indeed, $P$ contains the origin in its interior and $P\subset B(0,r)$ if $s_i,t_i>0$ are small enough. If, on the other hand, $r=\infty$ we can choose $s_i,t_i$ large enough so that $P$ contains $B(0,s)$ for any given $s>0$.)
The distance function $g:x\mapsto\dist (x,P)$ is clearly a DC aura and its restriction to $B(0,r)$ does not weakly touch $f$. Hence, an application of Lemma~\ref{L_f+g} guarantees that $f+g$ is a DC aura in $B(0,r)$. Let $0<s<r$ be such that $P\subset B(0,s)$. By Lemma~\ref{vldc}~(viii), there exists a DC function $h:\R^d\to[0,\infty)$ such that $h|_{B(0,s)}=(f+g)|_{B(0,s)}$. Denoting $p\coloneqq \dist(\cdot,B(0,s))$, the function $q\coloneqq \max\{h,p\}$ is a DC aura for $A^*$ (note that $p^{-1}\{0\}\subset P$).
\end{proof}

\begin{proposition}  \label{P_loc_aura}
Let $\varnothing\neq A\subset\R^d$ be closed. Then, the following are equivalent.

\begin{enumerate}
\item[{\rm (i)}] $A$ is locally WDC.
\item[{\rm (ii)}] For any $a\in A$ there exist an $r>0$ and a DC aura $f: B(a,r)\to [0,\infty)$ in $B(a,r)$ for $A$ (in particular, $f^{-1}\{0\}=A\cap B(a,r)$).
\end{enumerate}
\end{proposition}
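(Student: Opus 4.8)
The plan is to prove both implications directly, with essentially all of the substantive work displaced into Lemma~\ref{L_local}; the forward implication needs only the locality of the Clarke subgradient.

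For (i)$\Rightarrow$(ii), fix $a\in A$. By local WDC-ness there is a WDC set $A^*$ agreeing with $A$ on an open neighbourhood $U\ni a$, hence a DC aura $g:\R^d\to[0,\infty)$ with $A^*=g^{-1}\{0\}$ and $0$ weakly regular for $g$. Choose $r>0$ with $B(a,r)\subset U$ and set $f\coloneqq g|_{B(a,r)}$. Then $f$ is DC and nonnegative, and $f^{-1}\{0\}=A^*\cap B(a,r)=A\cap B(a,r)$. It remains to check that $0$ is weakly regular for $f$ in $B(a,r)$. The key point is that the Clarke subgradient is a purely local object: since $B(a,r)$ is open, $g$ and $f$ have the same points of differentiability and the same derivatives there, whence $\partial f(x)=\partial g(x)$ for every $x\in B(a,r)$. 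Consequently any sequence $x_i\to x$ in $B(a,r)$ with $f(x_i)>0=f(x)$ and $u_i\in\partial f(x_i)$ is also admissible for $g$, so $\liminf_i|u_i|>0$ follows from the weak regularity of $0$ for $g$. Thus $f$ is a DC aura in $B(a,r)$ for $A$.

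For (ii)$\Rightarrow$(i), fix $a\in A$ and let $f:B(a,r)\to[0,\infty)$ be a DC aura in $B(a,r)$ for $A$. Since $a\in A\cap B(a,r)=f^{-1}\{0\}$, we have $f(a)=0$, so the hypotheses of Lemma~\ref{L_local} are met. That lemma supplies an $s\in(0,r)$ and a compact WDC set $A^*\subset B(a,r)$ with $A^*\cap B(a,s)=f^{-1}\{0\}\cap B(a,s)=A\cap B(a,s)$. Hence $A^*$ agrees with $A$ on the open neighbourhood $B(a,s)$ of $a$, and since $a\in A$ was arbitrary, $A$ is locally WDC.

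I expect no genuine obstacle at the level of this proposition itself. The forward direction rests on the elementary observation that weak regularity passes to restrictions to open sets, via the locality of $\partial$; the reverse direction is a one-line application of Lemma~\ref{L_local}. The real difficulty is entirely absorbed into Lemma~\ref{L_local}, whose proof relies on the corrected form of \cite[Proposition~7.3]{PR13} (namely Lemma~\ref{L_f+g}) together with the integral-geometric ``generic non-tangency'' arguments of \cite{PR13}.
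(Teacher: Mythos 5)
Your proof is correct and follows essentially the same route as the paper: the forward implication is the observation that a (global) DC aura restricts to a DC aura in any open convex set (the paper calls this ``obvious''; you merely spell out the locality of the Clarke subgradient), and the reverse implication is exactly the paper's one-line application of Lemma~\ref{L_local}. Nothing to object to.
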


\begin{proof}
Since any DC aura is also a DC aura in $C$ for any $C\subset\R^d$ open convex, the implication (i) $\implies$ (ii) is obvious. To show the other implication, let $A$ be a closed set fulfilling (ii), and let $a\in A$ be given. By (ii), there exists a DC $B(a,r)$-aura $f$ for some $r>0$. Using Lemma~\ref{L_local}, we can find a (compact) WDC set $A^*$ which agrees with $A$ on some neighbourhood of $a$. Thus, $A$ is locally WDC.
\end{proof}   

\begin{proposition}[Localization of WDC sets]  \label{P_local}
Let $A\subset\R^d$ be a WDC set, $a\in A$ and $s>0$. Then there exists a compact WCD set $A^*\subset A$ such that 
$A^*\cap B(a,s)=A\cap B(a,s)$.
\end{proposition}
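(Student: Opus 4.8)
The plan is to deduce the proposition directly from Lemma~\ref{L_local} by specializing it to a global aura. Since $A$ is WDC, fix a DC aura $f\colon\R^d\to[0,\infty)$ for $A$, so that $A=f^{-1}\{0\}$ and $0$ is a weakly regular value of $f$. Because $\R^d=B(a,\infty)$ and $A\cap\R^d=A$, this global $f$ is in particular a DC aura in $B(a,\infty)$ in the sense used before Lemma~\ref{L_local}; moreover $f(a)=0$ since $a\in A$. Thus the hypotheses of Lemma~\ref{L_local} are met with $r=\infty$, and the lemma is applicable for the prescribed radius $s>0$.

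Applying Lemma~\ref{L_local} in the case $r=\infty$ to the given $s>0$, I obtain a compact WDC set $A^*\subset\R^d$ such that
\[ A^*\cap B(a,s)=f^{-1}\{0\}\cap B(a,s)=A\cap B(a,s), \]
which is exactly the required agreement on $B(a,s)$, and $A^*$ is compact and WDC as demanded.

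The only point not literally contained in the \emph{statement} of Lemma~\ref{L_local} is the inclusion $A^*\subset A$, and this is where a small amount of care is needed: it follows from the \emph{construction} in the proof of that lemma. There $A^*$ is exhibited as $f^{-1}\{0\}\cap P$ for a suitable polytope $P$ (the aura $q=\max\{h,p\}$ built there has zero set $f^{-1}\{0\}\cap P$, the term $h$ agreeing with $f+\dist(\cdot,P)$ on a large ball containing $P$ and $p$ deleting the spurious zeros outside that ball). Since here $f$ is a \emph{global} aura, $f^{-1}\{0\}=A$, and hence $A^*=A\cap P\subset A$. Accordingly, the main (indeed the only) obstacle is this containment, which I would handle either by reading it off the explicit form of $A^*$ as above, or — cleanly — by recording the stronger conclusion $A^*\subset f^{-1}\{0\}$ already in the statement of Lemma~\ref{L_local}; everything else is a direct specialization of that lemma to a global aura with $r=\infty$.
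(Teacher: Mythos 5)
Your proof is correct and takes essentially the same route as the paper, whose entire proof reads ``Apply Lemma~\ref{L_local} with $r=\infty$.'' Your additional remark that the inclusion $A^*\subset A$ is not literally part of the statement of Lemma~\ref{L_local} but must be read off from its proof (where $A^*=f^{-1}\{0\}\cap P$ for a polytope $P$, so that $A^*\subset f^{-1}\{0\}=A$ for a global aura) is a genuine subtlety that the paper leaves implicit, and your handling of it is sound.
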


\begin{proof}
Apply Lemma~\ref{L_local} with $r=\infty$.
\end{proof}

\section{Deformation lemma for Lipschitz mappings}
Let $X$ be topological space and $Y\subset X$. A continuous mapping $H:X\times[0,1]\to X$ is a {\em deformation retraction} of $X$ onto $Y$ if $H(x,0)=x$ whenever $x\in X$, $H(y,t)=y$ whenever $y\in Y$ and $H(X\times\{1\})=Y$ (cf.\ \cite[p.~2]{Hat02}).

The following lemma extends \cite[Lemma~4.1]{CC01}, removing the smoothness assumption. Note that the notion of a proper mapping is given in Remark~\ref{rem_w_reg}~(iii).

\begin{lemma} \label{DL}
Let $f:\rd\to\R$ be Lipschitz and proper, $M\coloneqq \{f\leq 0\}\neq\varnothing$ compact and assume that $0$ is a weakly regular value of $f$. Then there exist $0<\ep<1$ and a continuous mapping 
$H:\{f<\ep\}\times[0,1]\to \{f<\ep\}$ such that
\begin{enumerate}
\item[{\rm (i)}] $|v|\geq\ep$ whenever $v\in\partial f(x)$ and $0<f(x)<\ep$,
\item[{\rm (ii)}] if $0<f(x)<\ep$ then $H(x,0)=x$ and $H(x,1)\in\partial M$,
\item[{\rm (iii)}] if $x\in M$ then $H(x,t)=x$ for all $t\in[0,1]$,
\item[{\rm (iv)}] for any $(x,t)\in \{0\leq f<\ep\}\times[0,1]$,
$$\frac{\ep}{2}|H(x,t)-H(x,1)|\leq f(H(x,t)),$$
\item[{\rm (v)}] if $f(z)<\ep$ then
$$\frac{\ep}2\,\dist(z,M)\leq f(z)\leq(\lip f)\,\dist(z,M),$$
\item[{\rm (vi)}] for any $0<r<\ep$ and $y\in\partial M$ there exists $x$ with $f(x)=r$ and $H(x,1)=y$,
\item[{\rm (vii)}] for any $0<r<\ep$, the restriction of $H$ to $\{f\leq r\}\times [0,1]$ is a deformation retraction of $\{f\leq r\}$ onto $M$. 
\end{enumerate}
\end{lemma}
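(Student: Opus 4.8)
The plan is to imitate the smooth construction of \cite[Lemma~4.1]{CC01}, replacing the gradient flow of $f$ by the flow of a locally Lipschitz \emph{pseudo-gradient} field built from the Clarke subgradient. Since $M=\{f\le0\}$ is compact, $f$ is proper and $0$ is weakly regular, Remark~\ref{rem_w_reg}~(iii) (with $c=0$) furnishes $\ep_0>0$ with $|u|\ge\ep_0$ whenever $u\in\partial f(x)$ and $0<f(x)<\ep_0$; setting $\ep\coloneqq\min(\ep_0,1)$ yields (i), and properness makes $f^{-1}([0,\ep])$ compact, so $\Omega\coloneqq\{0<f<\ep\}$ is bounded. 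On $\Omega$ I would then build a locally Lipschitz $w$ with $u\cdot w(x)\ge1$ for all $u\in\partial f(x)$ and $|w(x)|\le2/\ep$: for fixed $x_0$, let $u_0$ be the minimal-norm element of the compact convex set $\partial f(x_0)$ and $e_0\coloneqq u_0/|u_0|$, so the nearest-point inequality gives $u\cdot e_0\ge|u_0|\ge\ep$ on $\partial f(x_0)$; upper semicontinuity of $\partial f$ (cf.\ \eqref{uzcl}) yields a neighbourhood $U_0$ on which $u\cdot\bigl(\tfrac2\ep e_0\bigr)\ge1$ for all $u\in\partial f(x)$, and gluing the vectors $\tfrac2\ep e_0$ by a locally Lipschitz partition of unity produces $w$, both properties being preserved under convex combinations.

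Next I would flow: let $\Phi(x,\cdot)$ be the maximal solution of $\dot y=-w(y)$ with $\Phi(x,0)=x$. By the Clarke chain rule for the Lipschitz composition $s\mapsto f(\Phi(x,s))$,
\begin{equation*}
 \tfrac{d}{ds}f(\Phi(x,s))\le\max_{u\in\partial f(\Phi(x,s))}u\cdot(-w)=-\min_u u\cdot w\le-1
\end{equation*}
for a.e.\ $s$, so $f$ strictly decreases along the orbit, which therefore stays in the compact set $f^{-1}([0,\ep])$ and reaches $\{f=0\}$ at a unique finite time $\tau(x)\le f(x)<\ep$. Continuity of the flow together with the transversality $\frac{d}{ds}f\le-1$ make $\tau$ continuous on $\Omega$. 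I then set $H(x,t)\coloneqq\Phi(x,t\,\tau(x))$ for $x\in\Omega$ and $H(x,t)\coloneqq x$ for $x\in M$; as $\tau(x)\to0$ when $f(x)\to0^+$, these pieces glue to a continuous $H$ on $\{f<\ep\}\times[0,1]$. The landing point $\Phi(x,\tau(x))$ lies in $M$ and, being approached along $\{f>0\}$, in $\partial M=M\cap\overline{M^c}$, which gives (ii); (iii) holds by definition.

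The quantitative clauses follow by bookkeeping. For $0<f(x)<\ep$ one has $|H(x,t)-H(x,1)|\le\int_{t\tau}^{\tau}|w|\le\tfrac2\ep(1-t)\tau$ and $f(H(x,t))\ge(1-t)\tau$ (since $f$ falls at rate $\ge1$ and vanishes at $s=\tau$), which is (iv), the case $f(x)=0$ being trivial. For (v), the upper bound comes from $f(z)-f(z')\le(\lip f)\dist(z,M)$ with $z'\in M$ nearest to $z$ and $f(z')\le0$, and the lower bound from flowing $z$ to $M$ in time $\le f(z)$ at speed $\le2/\ep$, giving $\dist(z,M)\le\tfrac2\ep f(z)$. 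For (vi), given $y\in\partial M$ I pick $z_n\to y$ in $\Omega$ with $f(z_n)\to0^+$, flow backward (along $+w$, which raises $f$) to level $r$ to obtain $x_n$ with $f(x_n)=r$ and $H(x_n,1)=\Phi(z_n,\tau(z_n))\to y$; as $\{f=r\}$ is compact and $x\mapsto H(x,1)$ is continuous, the image is closed and contains $y$. Finally (vii) is formal: $f(H(x,t))\le f(x)\le r$ on $\{f\le r\}$, $H(\cdot,0)=\mathrm{id}$, $H$ fixes $M$, $H(\cdot,1)$ maps into $M$, and is onto $M$ since $H(m,1)=m$ for every $m\in M\subset\{f\le r\}$.

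The main obstacle, I expect, is entirely nonsmooth in character: constructing $w$ with the \emph{precise} constants $u\cdot w\ge1$ and $|w|\le2/\ep$ while retaining local Lipschitzness, and then proving continuity of $H$—equivalently, continuity of the hitting time $\tau$ and the seamless matching of the flow with the identity across $\partial M$—using only the a.e.\ descent estimate $\frac{d}{ds}f(\Phi(x,s))\le-1$ in place of a classical transversal gradient.
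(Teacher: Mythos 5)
Your proposal is correct and follows essentially the same route as the paper: a pseudo-gradient vector field on $\{0<f<\ep\}$ built from a partition of unity (the paper gets its finitely many directions $u_1,\dots,u_k$ by a compactness argument on the space of convex compact subsets of $\{\ep\le|v|\le\lip f\}$, while you pick the minimal-norm element of $\partial f(x_0)$ pointwise — both yield the same kind of field, normalized so that $u\cdot w\ge 1$ and $|w|\le 2/\ep$), followed by the flow $\dot y=-w(y)$, the finite continuous hitting time $\tau$, and the retraction defined from the flow (your reparametrization $H(x,t)=\Phi(x,t\tau(x))$ versus the paper's $\Phi(x,\min(t,\tau(x)))$ is immaterial). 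The one step you leave compressed — continuity of $\tau$ — is handled in the paper exactly as you anticipate, using the two-sided bound $1\le -\tfrac{d}{ds}f(\Phi(x,s))\le 2\lip f/\ep$ together with continuous dependence on initial conditions, and those bounds are available from your construction.
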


\begin{proof}
Since $0$ is a weakly regular value of $f$, there exists an $\ep>0$ such that (i) holds, see Remark~\ref{rem_w_reg}~(3). It is not difficult to see that we can take $\ep\in (0,1)$. Denote $U_\ep\coloneqq \{x:\, 0<f(x)<\ep\}$.
As the first step we show that there exists a bounded $C^\infty$ mapping $F:U_\ep \to\rd$ such that
\begin{equation}  \label{E2}
\langle F(x),v\rangle > \ep\text{ whenever }x\in U_\ep \text{ and }v\in\partial f(x).
\end{equation}
To show this, we slightly modify the construction from the proof of \cite[Lemma~3.1]{BC90}. Given $u\in\sph$, denote $V(u)\coloneqq \{v\in\rd:\, v\cdot u>\frac\ep 2\}$ and $G(u)\coloneqq \{x\in U_\ep :\, \partial f(x)\subset V(u)\}$.

Let $\mathcal K$ be the space of all nonempty convex compact subsets of $\rd$ equipped with the Hausdorff metric, and let ${\mathcal V},{\mathcal V}(u)$ ($u\in\sph$) denote its subsets of all $K\in{\mathcal K}$ contained in $\{v\in\rd:\, \ep\leq|v|\leq\lip f\}$, $V(u)$, respectively. Since ${\mathcal V}$ is compact and covered by the open sets ${\mathcal V}(u)$, $u\in\sph$, there exist finitely many unit vectors $u_1,\ldots,u_k$ such that
${\mathcal V}\subset{\mathcal V}(u_1)\cup\dots\cup{\mathcal V}(u_k)$. Note that, by (i), $\partial f(x)\in{\mathcal V}$ whenever $x\in U_\ep $. 
Thus, also $U_\ep \subset G(u_1)\cup\dots\cup G(u_k)$.
Since $x\mapsto\partial f(x)$ is upper semicontinuous (see \eqref{uzcl}), the sets $G(u_i)$ are open, and there exists a $C^\infty$ partition of unity $1=\sum_{i=1}^k\gamma_i$ on $U_\ep $ subordinated to the open cover $G(u_1),\dots,G(u_k)$. The mapping

$$F(x)=2\sum_{i=1}^k\gamma_i(x)u_i$$
then satisfies \eqref{E2}; note that $\ep\leq|F(x)|\leq 2$ for all $x\in U_\ep $.

Consider now the differential equation
\begin{equation}  \label{DE}
x'(t)=-F(x(t)),\quad x(0)=x,
\end{equation}
for $x\in U_\ep $. Since $F$ is $C^\infty$ smooth, it is locally Lipschitz in $U_\ep $ and, hence, there exists a unique maximal solution 
$\varphi_x:I_x\to U_\ep$ for any $x\in U_\ep$. Denote $\tau(x)\coloneqq \sup I_x$; we shall find an upper bound for $\tau(x)$. Consider the function
$g_x\coloneqq f\circ\varphi_x$ which is clearly Lipschitz and, by the chain rule for Lipschitz functions (see \cite[Theorem~2.3.9]{Cl90}), its Clarke gradient satisfies
\begin{eqnarray}
\partial g_x(t)&\subset&\{ \langle \varphi_x'(t),v\rangle:\ v\in\partial f(\varphi_x(t))\}\nonumber\\
&=&\{ -\langle F(\varphi_x(t)),v\rangle:\ v\in\partial f(\varphi_x(t))\}\nonumber\\
&\subset&[-2\lip f,-\ep]; \label{grad_g}
\end{eqnarray}
we have used \eqref{E2} and $|F|\leq 2$ in the last inclusion. The mean value theorem for Lipschitz functions \cite[Theorem~2.3.7]{Cl90} yields for $t\in I_x$
$$g_x(t)\leq g_x(0)-\ep t=f(x)-\ep t,\quad t\geq 0,$$
and, since clearly $g_x(t)>0$, we obtain $t<f(x)/\ep$, hence,
\begin{equation} \label{tau}
\tau(x)\leq \ep^{-1}f(x)\leq \lip f
\end{equation}
since $x\in U_\ep $.

The mapping $\varphi_x$ is Lipschitz (the norm of its derivative is bounded by $2$, see \eqref{DE}) on $I_x$ and, hence, it has Lipschitz extension to $I_x\cup\{\tau(x)\}\supset [0,\tau(x)]$. Note that necessarily 
$$g_x(\tau(x))=0,\quad x\in U_\ep .$$
Indeed, $g_x(\tau(x))\geq 0$ by continuity, and if $g_x(\tau(x))>0$ 
we would obtain a contradiction with the fact that no maximal solution can end inside $U_\ep $, see e.g. \cite{HS74}, Theorem in Ch.~8, \S5.

We shall show now that $\tau$ is continuous. Fix an $x\in U_\ep$ and $\delta>0$ and let $x_i\to x$. Then $f(\varphi_x(\tau(x)-\delta))>0$, and, since the solutions of \eqref{DE} depend continuously on the initial condition $x$ (see \cite[Ch.~8, \S6]{HS74}), also $f(\varphi_{x_i}(\tau(x)-\delta))>0$ for sufficiently large $i$, hence, $\tau(x_i)>\tau(x)-\delta$. Since $\delta>0$ can be arbitrarily small, we get $\liminf_i\tau(x_i)\geq\tau(x)$. For the other inequality, note that $g_x(\tau(x)-\delta)<2(\lip f)\delta$ by \eqref{grad_g} and, again by the continuity in initial conditions, we have $g_{x_i}(\tau(x)-\delta)<3(\lip f)\delta$ for sufficiently large $i$. Using now the other bound from \eqref{grad_g}, we get that $g_{x_i}(\tau(x)-\delta+\eta)<0$ provided that $\eta=\frac{3\lip f}{\ep}\delta$ and $\tau(x)-\delta+\eta$ lies in the domain of $g_{x_i}$. But $g_{x_i}\geq 0$ on its domain, which implies that $\tau(x_i)\leq \tau(x)-\delta+\eta$, and since $\delta$ (and, hence, also $\eta$) can be arbitrarily small, we get $\limsup_i\tau(x_i)\leq\tau(x)$. Thus we have proved that $\tau$ is continuous.

We define now
$$H(x,t)\coloneqq \begin{cases} \varphi_x(t):& x\in U_\ep,\, 0\leq t\leq\tau(x)\\
                 \varphi_x(\tau(x)):& x\in U_\ep,\, \tau(x)\leq t\leq 1\\
								 x:& f(x)\leq 0
					\end{cases}
$$
The continuity of $H$ can be shown similarly as in \cite{CC01}, the idea is as follows: Note that the mapping $H(x,\cdot)$ is Lipschitz with constant $2$ for each $x$. Thus, for the continuity in both variables, is is enough to show that $H(\cdot,t)$ is continuous for each $t$. Let $t\in [0,1]$ and $x\in\{ f<\ep\}$ be given; the continuity of $H(\cdot,t)$ at $x$ can be seen by distinguishing several cases. (a) If $f(x)>0$ and $t<\tau(x)$ then we can use \cite[Ch.~8, \S6]{HS74}, as above. (b) If $f(x)>0$ and $t=\tau(x)$, choose an arbitrary $\omega>0$. Now consider an $s<t$ and note that by (a), for $y$ sufficiently close to $x$, $|H(y,s)-H(x,s)|<\omega/2$. By the Lipschitz property of $H$ in the second variable, we have $$|H(y,t)-H(x,t)|\leq \frac\omega 2+2\cdot 2(t-s)<\omega$$
for $t-s$ small enough. (c) If $f(x)>0$ and $t>\tau(x)$ then $H(x,t)=H(x,\tau(x))$ and also $H(y,t)=H(y,\tau(x))$ for $y$ close to $x$, so our assertion follows from (b). (d) If $f(x)\leq 0$ then $H(x,t)=x$. Consider an $y$ close to $x$; then either $f(y)\leq 0$ and, hence $H(y,t)=y$ is close to $x$, or $f(y)>0$ and $\tau(y)$ is small due to \eqref{tau}, which implies that $|y-H(y,t)|\leq 2\tau(y)$ is small as well and the proof of continuity is finished.

The properties (ii) and (iii) follow immediately from the above considerations. We shall verify (iv). If $x\in f^{-1}((-\infty,0])$ or $t\geq\tau(x)$ then the inequality is obvious. Take now $x\in U_\ep$ and $0\leq t\leq\tau(x)$ and note that, by the mean value theorem for the Lipschitz function $g_x$ and \eqref{grad_g},
$$f(H(x,t))=g_x(t)\geq g_x(\tau(x))+(\tau(x)-t)\ep=(\tau(x)-t)\ep$$
which, together with the fact that $\varphi_x$ is $2$-Lipschitz, implies (iv). 

Using (iv) with $x=z$ and $t=0$ and the Lipschitz property of $f$, we obtain (v). 

In order to prove (vi), take any $y\in\partial M$ and a sequence $y_i\to y$ with $f(y_i)>0$. Since clearly $H(y,1)=y$, we have $H(y_i,1)\to y$ by continuity. If $\varphi_x:I_x\to U_\ep$ is any maximal solution of \eqref{DE}, we have shown that $g_x(\sup I_x)=0$, and similarly it follows that $g_x(\inf I_x)=\ep$ (since if $g_x(\inf I_x)\in (0,\ep)$ we would get a contradiction with the maximality). Thus, any maximal solution hits all level sets $\{f=r\}$ with $0<r<\ep$. Hence, there exist $t_i\in I_{y_i}$ and $x_i\in\{f=r\}$ with $x_i=\varphi_{y_i}(t_i)$. Since the equation \eqref{DE} is autonomous, we have $\varphi_{y_i}(t_i+t)=\varphi_{x_i}(t)$ whenever one side is defined (see \cite{HS74}, Theorem~1 in Ch.~8, \S7)  and we obtain easily that $H(x_i,1)=H(y_i,1)\to y$. It follows that any accumulation point $x$ of $(x_i)$ satisfies $f(x)=r$ and $H(x,1)=y$, by continuity of $H$. 

Property (vii) follows using the fact that $f(H(x,t))$ is decreasing in $t$. 
\end{proof}

\begin{corollary}\label{Cor:propertiesOfRetracts}
Let $f,M$ and $\ep>0$ be as in Lemma~\ref{DL}. Then, 
\begin{enumerate}
\item $\chi(M)=\chi(\{f\leq r\})$, $0\leq r<\ep$,
\item $M$ is locally contractible (hence, locally arcwise connected),
\item both $M$ and $\rd\setminus M$ have finitely many connected components.
\end{enumerate}
In particular, (1), (2) and (3) hold if $M$ is a compact WDC set and $f$ its proper DC aura (cf.\ Remark~\ref{2.2}~(v)).
\end{corollary}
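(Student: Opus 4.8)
The plan is to extract all three assertions from the retraction and deformation retraction furnished by Lemma~\ref{DL}. First I would record the structural fact underlying everything: setting $\rho\coloneqq H(\cdot,1)$, Lemma~\ref{DL}(ii),(iii) show that $\rho\colon\{f<\ep\}\to M$ is continuous, maps into $M$, and restricts to the identity on $M$; thus $M$ is a retract of the open set $\{f<\ep\}\subseteq\rd$. Since open subsets of $\rd$ are ANRs and a retract of an ANR is an ANR, $M$ is a \emph{compact ANR}. Being compact, $M$ is moreover finitely dominated: the identity $\rho\circ i=\id_M$ (with $i\colon M\hookrightarrow\{f<\ep\}$) factors up to homotopy through a finite subcomplex of the CW-space $\{f<\ep\}$, because $i(M)$ is compact; consequently $H_*(M)$ is a direct summand of the homology of a finite complex and is finitely generated. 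For the WDC ``in particular'' clause one only needs Remark~\ref{2.2}~(v), which provides a proper $1$-Lipschitz DC aura $f$ with $M=f^{-1}\{0\}=\{f\le 0\}$, so that Lemma~\ref{DL} applies.

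For (1) I would argue by homotopy invariance. For $0<r<\ep$, Lemma~\ref{DL}(vii) says the restriction of $H$ is a deformation retraction of $\{f\le r\}$ onto $M$, whence $\{f\le r\}\simeq M$; for $r=0$ we have $\{f\le 0\}=M$ by definition. Since $H_*(M)$ is finitely generated and $\{f\le r\}\simeq M$, the homology of $\{f\le r\}$ is finitely generated as well, so both Euler characteristics are defined; homotopy invariance then yields $\chi(M)=\chi(\{f\le r\})$ for all $0\le r<\ep$.

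Assertion (2) I would prove by an explicit local contraction (this also re-derives the local contractibility of the ANR). Fix $a\in M$ and a neighbourhood of $a$ in $M$, and choose $\delta>0$ with $\overline{B(a,\delta)}\subseteq\{f<\ep\}$, possible since $f(a)=0<\ep$. For $z\in M\cap B(a,\delta)$ put $w_s\coloneqq(1-s)z+sa\in B(a,\delta)$ and define $\Psi(z,s)\coloneqq\rho(w_s)=H(w_s,1)$. Then $\Psi$ is continuous, takes values in $M$, and satisfies $\Psi(z,0)=\rho(z)=z$ and $\Psi(z,1)=\rho(a)=a$ by Lemma~\ref{DL}(iii). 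The estimate in Lemma~\ref{DL}(iv) at $t=0$, namely $\tfrac{\ep}{2}|w_s-\rho(w_s)|\le f(w_s)$, combined with $f(w_s)=f(w_s)-f(a)\le(\lip f)\,|w_s-a|$, gives $|\rho(w_s)-w_s|\le\tfrac{2\lip f}{\ep}|w_s-a|$ (the left side vanishing when $w_s\in M$), so
$$|\Psi(z,s)-a|\le\Big(1+\tfrac{2\lip f}{\ep}\Big)|w_s-a|\le\Big(1+\tfrac{2\lip f}{\ep}\Big)|z-a|.$$
Shrinking $\delta$ keeps the contraction $\Psi$ of $M\cap B(a,\delta)$ to the point $a$ inside the prescribed neighbourhood, proving local contractibility; local arcwise connectedness follows at once.

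Finally, for (3): local contractibility makes $M$ locally path-connected, so its components are open, and compactness forces finitely many. The set $\rd\setminus M$ is open, hence also locally path-connected with open components, and has exactly one unbounded component since $M$ is bounded. To bound the bounded ones I would invoke Alexander duality: for the compact ANR $M$ one has $\check H^*(M)=H^*(M)$ finitely generated, and (working in $S^d$) $\widetilde H_0(\rd\setminus M)\cong\check H^{d-1}(M)$, which is finitely generated; hence $\rd\setminus M$ has finitely many components. I expect the genuine obstacle to lie precisely here: the homotopy equivalences are immediate from Lemma~\ref{DL}(vii), but making $\chi$ and the component counts \emph{finite and well-defined} is what forces the compact-ANR (finite domination) input for $M$ and Alexander duality for the complement, rather than anything drawn directly from the flow.
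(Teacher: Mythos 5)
Your proof is correct and follows essentially the same route as the paper: $H(\cdot,1)$ exhibits $M$ as a retract of the open set $\{f<\ep\}$, hence a compact ENR with finitely generated homology, after which (1) follows from Lemma~\ref{DL}~(vii) and homotopy invariance, and (2), (3) from standard properties of compact ENRs. The only differences are cosmetic: where the paper cites Borsuk for local contractibility of ENRs and Hatcher for finitely generated homology, you instead build the local contraction explicitly from the estimate in Lemma~\ref{DL}~(iv) and invoke Alexander duality to count the components of $\rd\setminus M$ — both of which are valid and merely make explicit what the paper leaves to the references.
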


\begin{proof}
Assertion (1) follows from (vii) and the well-known fact that any deformation retraction is a homotopy equivalence, see \cite[Ch.~III.5]{Dold80}.

Lemma~\ref{DL} implies that $M$ is a retract of its open neighbourhood, hence, a Euclidean neighbourhood retract, and such spaces are known to be locally contractible, see e.g.\ \cite[IV. (3.1), V. (2.6)]{Bor67}; this proves (2).

Finally, (3) follows from the well-known fact that the homology groups of a Euclidean neighbourhood retract are finitely generated (see, e.g., \cite[Corollary~A.8]{Hat02}).
\end{proof}

\begin{notation}\label{fixaury}
If $\varnothing\neq M \subset \R^d$ is a compact WDC set, we always fix $f=f_M$, $\ep = \ep_M$ and $H=H_M$
 in the following way. First we choose  a proper $1$-Lipschitz DC aura $f$ for $M$ (see Remark~\ref{2.2}~(v))
 and then choose  $0<\ep<1$ and $H$ as in Lemma~\ref{DL}. Note  that  $H(x,t)$ is defined
 for all  $x \in B(M,\ep)\coloneqq \{y:\, \dist(y,M)<\ep\}$, $t \in [0,1]$, and that $|u| \geq \ep$ whenever 
 $x \in B(M,\ep)$  and  $u \in \partial f(x)$. Indeed, since $f$ is $1$-Lipschitz,
 we have $B(M,\ep) \subset \{f < \ep\}$.
\end{notation}

\begin{lemma}\label{L:connecting}
Let $\varnothing\neq M \subset \R^d$ be a  compact WDC set. Let  $0<\ep=\ep_M< 1$  be as in Notation \ref{fixaury}. Suppose that $0< \delta < \ep$,
 $x,y\in \partial M$ 
 and $\vf:[0,1]\to \R^d$ is a continuous curve for which 
$$ \vf(0)=x,\ \vf(1)=y,\ \vf((0,1)) \subset \R^d \setminus M\ \ \text{and}\ \  \diam(\vf([0,1])) \leq \delta.$$
 Then there exists a continuous $\kappa: [0,1] \to \partial M$ for which
$$\kappa(0)=x,\ \kappa(1)=y,\ \ \text{and}\ \ \diam(\kappa([0,1])) < \frac{6\delta}{\ep}.$$
\end{lemma}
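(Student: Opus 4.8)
The plan is to push the curve $\vf$ onto $\partial M$ by applying the retraction at time $1$, i.e.\ to set $\kappa(s)\coloneqq H(\vf(s),1)$ for $s\in[0,1]$. First I would check that this is well defined, namely that $\vf$ stays inside the domain $\{f<\ep\}$ of $H$. Since $f$ is a nonnegative DC aura for $M$ we have $M=\{f=0\}$, and as $x=\vf(0)\in\partial M\subset M$ this gives $f(x)=0$. Because $f$ is $1$-Lipschitz and $\diam(\vf([0,1]))\le\delta$, every $s$ satisfies $f(\vf(s))\le f(x)+|\vf(s)-x|\le\delta<\ep$, so $\vf(s)\in\{f<\ep\}$ and $\kappa$ is defined on all of $[0,1]$; its continuity is immediate from the continuity of $H$ and of $\vf$.

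Next I would verify that $\kappa$ has the required target and endpoints. For $s\in(0,1)$ we have $\vf(s)\in\rd\setminus M=\{f>0\}$, hence $0<f(\vf(s))\le\delta<\ep$, and property (ii) of Lemma~\ref{DL} yields $\kappa(s)=H(\vf(s),1)\in\partial M$. At the endpoints $x,y\in\partial M\subset M$, so property (iii) gives $\kappa(0)=H(x,1)=x$ and $\kappa(1)=H(y,1)=y$, and in particular these lie in $\partial M$. Thus $\kappa:[0,1]\to\partial M$ is a continuous curve joining $x$ and $y$.

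It remains to estimate the diameter, and here the key is to apply property (iv) of Lemma~\ref{DL} at time $t=0$. Since $H(\vf(s),0)=\vf(s)$ (by (ii) when $f(\vf(s))>0$ and by (iii) when $f(\vf(s))=0$), property (iv) reads
$$\tfrac{\ep}{2}\,|\vf(s)-\kappa(s)|=\tfrac{\ep}{2}\,|H(\vf(s),0)-H(\vf(s),1)|\le f(H(\vf(s),0))=f(\vf(s))\le\delta,$$
so $|\vf(s)-\kappa(s)|\le 2\delta/\ep$ for every $s$. For $s,s'\in[0,1]$ the triangle inequality together with $\diam(\vf([0,1]))\le\delta$ then gives
$$|\kappa(s)-\kappa(s')|\le|\kappa(s)-\vf(s)|+|\vf(s)-\vf(s')|+|\vf(s')-\kappa(s')|\le\tfrac{2\delta}{\ep}+\delta+\tfrac{2\delta}{\ep}=\tfrac{4\delta}{\ep}+\delta.$$
Since $\ep<1$ we have $\delta<\delta/\ep$, whence $\tfrac{4\delta}{\ep}+\delta<\tfrac{5\delta}{\ep}<\tfrac{6\delta}{\ep}$, which is the claimed bound.

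I do not expect a serious obstacle here: once one thinks of composing $\vf$ with $H(\cdot,1)$, the argument reduces to routine verifications. The only points requiring a little care are (a) confirming that $\vf$ never leaves the domain $\{f<\ep\}$, which relies on $f=0$ on $\partial M$ together with the diameter bound and the $1$-Lipschitzness of $f$, and (b) recognizing that the quantitative control $|\vf(s)-\kappa(s)|\le 2\delta/\ep$ is exactly property (iv) specialized to $t=0$, which is what converts the smallness of $\vf$ into smallness of its image on $\partial M$.
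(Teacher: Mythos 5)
Your proof is correct and follows essentially the same route as the paper: both define $\kappa(s)=H(\vf(s),1)$ and use Lemma~\ref{DL}~(iv) at $t=0$ together with the $1$-Lipschitzness of $f$ to get $|\kappa(s)-\vf(s)|\le 2\delta/\ep$, then conclude by the triangle inequality (the paper bounds $|\kappa(u)-x|<3\delta/\ep$ and doubles, while you bound $|\kappa(s)-\kappa(s')|$ directly, a negligible difference). Your verification that $\vf$ stays in the domain $\{f<\ep\}$ and that $\kappa$ lands in $\partial M$ is slightly more explicit than the paper's, which is fine.
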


\begin{proof}
 Let $f=f_M$ and $H= H_M$ be as in Notation \ref{fixaury}.
Note that the function $\kappa(u)= H(\vf(u),1),\ u \in [0,1],$ is continuous with the range in $\partial M$.
By Lemma \ref{DL}~(iii), (vi) we obtain, for each  $0\leq u \leq 1$,   
\begin{eqnarray}
 |\kappa(u) -\vf(u)|= |H(\vf(u),1)-H(\vf(u),0)| &\leq& \frac{2}{\ep} f(\vf(u))\nonumber \\  &\leq& \frac{2}{\ep} \dist(\vf(u),M) \leq \frac{2}{\ep} \delta  \label{zdere}.
\end{eqnarray}
Therefore, for each $u \in [0,1]$, 
$$|\kappa(u) -x| \leq |\kappa(u)- \vf(u)| + |\vf(u)-x|  \leq \frac{2}{\ep}\delta+ \delta < \frac{3\delta}{\ep}. $$
 Consequently,  $\diam(\kappa([0,1])) < \frac{6\delta}{\ep}$ .
\end{proof}

Consider the situation of Lemma~\ref{DL}. If the mapping $H$ would be, moreover, Lipschitz, we would get that $\partial M$ is $(d-1)$-rectifiable (indeed, if $0<r<\ep$ then $\partial M=H(\cdot,1)(\{ f=r\})$ by Lemma~\ref{DL}~(vi) and the level set $\{f=r\}$ is a Lipschitz manifold of dimension $d-1$ since $r$ is a regular value of $f$). 
The following example shows, however, that the set $M$ fulfilling the assumptions of Lemma~\ref{DL} need not be $(d-1)$-rectifiable. Hence, the mapping $H$ does not always exist Lipschitz (cf. Question~\ref{Q:LR}).

\begin{example}\label{E:retractionNotLipschitz}
There exists a compact set $K\subset\R^2$ with empty interior and Hausdorff dimension greater than $1$ which admits a Lipschitz aura.
\end{example}

\begin{proof}
Fix an angle $\alpha\in (0,\frac{\pi}8)$ and consider the points
$$
V^{\pm}=\left(\pm\frac{1}{2},0\right),\quad V=\left(0,\frac 12 \tan\alpha\right),\quad 
B^{\pm}=\left(\pm\left(\frac{1}{2}-\frac 1{4\cos^2\alpha}\right),0\right)
$$
and triangles
$$
H=\conv\{V^-,V^+,V\}\quad\text{and}\quad H^{\pm}=\conv\{V^{\pm},B^{\pm},V\}.
$$

We further denote by $\gamma$ the size of the convex angle $\angle(B^-VB^+)$ and note that
$\gamma = \pi-4\alpha\in(\frac{\pi}{2},\pi)$.

\begin{tikzpicture}
\draw (-5,0)node[anchor=north] {$V^-$}  -- (5,0) node[anchor=north] {$V^+$} -- (0,1.82) node[anchor=south] {$V$} -- cycle;
\filldraw[fill=blue!40!white, draw=black] (-5,0) -- (-2.1688,0) node[anchor=north] {$B^-$} -- (0,1.82) -- cycle;
\filldraw[fill=blue!40!white, draw=black] (5,0) -- (2.1688,0) node[anchor=north] {$B^+$} -- (0,1.82) -- cycle;
\node at (-2.5,0.5) {$H^-$};
\node at (2.5,0.5) {$H^+$};
\draw (4,0.364) arc (160:180:1);
\node at (3.7,0.24) {$\alpha$};
\draw (-0.383,1.4986) arc (220:320:0.5);

\node at (0,1.1) {$\gamma$};
\end{tikzpicture} 

It is not difficult to verify that all the three triangles $H,H^-,H^+$ are similar. Let $\phi_+,\phi_-$ be the similarities mapping $H$ onto $H^+,H^-$ and keeping $V^+,V^-$ fixed, respectively. Both $\phi_+,\phi_-$ are contracting similarities with the same coefficient $a\coloneqq \frac{1}{2\cos\alpha}\in (\frac 12,1)$ and, hence, there exists a unique self-similar set $K\subset\R^2$ satisfying $K=\phi_{-}(K)\cup\phi_{+}(K)$, see \cite[Theorem~9.1]{Falconer}. (The choice $\alpha=\frac{\pi}{6}$ at the beginning would yield the well-known von Koch curve.) Moreover, the open set condition clearly holds (with open set $\INt H$), hence, the Hausdorff dimension of $K$ is $\ln\frac 12 / \ln a\in (1,2)$, see \cite[Theorem~9.3]{Falconer}, and elementary arguments lead to
$$V\in K\subset H^+\cup H^-.$$
We will show that the distance function $d_K: x\mapsto\dist(x,K)$, $x\in\R^2$, is a proper Lipschitz aura for $K$. $d_K$ is clearly proper and $1$-Lipschitz. We further show that
\begin{equation}  \label{regul}
|v|\geq -\cos\gamma\text{ whenever }x\in\R^2\setminus K\text{ and }v\in\partial d_K(x).
\end{equation}
This will imply that $0$ is a weakly regular value of $d_K$ and the proof will be complete.

Let $x,v$ be as in \eqref{regul}. Using \cite[Lemma~4.2]{Fu85}, we know that
$$v\in\conv \left\{\frac{x-y}{|x-y|}:\, y\in\Pi_K(x)\right\}\eqqcolon C_x,$$
where $\Pi_K(x)$ denotes the metric projection of $x$ to $K$ (i.e., the set of all points of $K$ lying in distance $d_K(x)$ from $x$). 
Thus, $|v|\geq\dist(0,C_x)$. If $\Pi_K(x)$ is a singleton then $|v|=1$ and \eqref{regul} holds. In the sequel, we will assume that $\Pi_K(x)$ has at least two points.

Assume that $\Pi_K(x)$ is contained in one of the triangles $H^+,H^-$, say in $H^-$.
Then, by the self-similarity of $K$, the preimage $x_1\coloneqq \phi_-^{-1}(x)$
satisfies $\Pi_K(x_1)=\phi_-^{-1}(\Pi_K(x))$ and, consequently, $C_{x_1}$ agrees with $C_x$ up to a linear isometry (in particular, $\dist(0,C_{x_1})=\dist(0,C_x)$).
If again $\Pi_K(x_1)$ is contained in $H^-$ or $H^+$, we iterate the same procedure until, after a finite number of $n$ steps, we get 
$x_n\coloneqq \phi_n^{-1}\circ\dots\circ\phi_1^{-1}(x)$, $\dist(0,C_{x_n})=\dist(0,C_x)$, 
$\Pi_K(x_n)\cap (H^+\setminus\{V\})\neq\varnothing$ and $\Pi_K(x_n)\cap (H^-\setminus\{V\})\neq\varnothing$. (We use the fact that the diameter of $\Pi_K(x_n)$ is positive and is increased by factor $a^{-1}$ in each step.) We shall write $x$ instead of $x_n$ in the sequel.

The open ball $B\coloneqq B(x,d_K(x))$ does not hit $K$, hence, $V\not\in B$ and let $V'$ denote the intersection point of the segment $\overline{x,V}$ with $\partial B$. We observe that $x$ lies in the interior of the (convex) angle $\angle(B^-VB^+)$ (it follows from the facts that $\gamma$ is an obtuse angle, $\overline{B}$ intersects both $H^-\setminus\{V\}$ and $H^+\setminus\{V\}$, and $V\not\in B$). 
Let $y_-\in\Pi_K(x)\cap (H^-\setminus\{V\})$ and $y_+\in\Pi_K(x)\cap (H^+\setminus\{V\})$ be such that the angles $\beta_-\coloneqq \angle(Vxy_-)$ and $\beta_+\coloneqq \angle(Vxy_+)$ are maximal (recall that $\Pi_K(x)$ is a compact set). Thus, denoting $\beta\coloneqq \beta_-+\beta_+$, $2\pi-\beta$ is the central angle corresponding to the inscribed angle $\angle(y_-V'y_+)$ and we have 
$$\frac{2\pi -\beta}{2}=\angle(y_-V'y_+)>\angle(y_-Vy_+)\geq\gamma.$$
Since $\gamma>\frac{\pi}{2}$, we obtain $\beta<\pi$ and
$$\dist(0,C_x)=\frac{\dist(x,\overline{y_-,y_+})}{|x-y_-|}=\cos\frac\beta 2\geq-\cos\gamma$$
(note that the whole set $\Pi_K(x)$ lies on the shorter arc of $\partial B$ with endpoints $y_-$, $y_+$). Thus \eqref{regul} follows.
\end{proof}

\section{Gauss-Bonnet formula}  \label{Sec_GB}
We recall that a locally Lipschitz function $f:\rd\to\R$ is {\it Monge-Amp\`ere} if there exists a (necessarily unique) $d$-dimensional integral current without boundary $\bD f$ on $\rd\times\rd$ which annihilates the symplectic $2$-form ($\bD f\llc\omega=0$), its support has bounded first component and for any $g\in C^\infty_c(\rd\times\rd)$,
$$\bD f(g\cdot (\pi_0)^{\#}\Omega_d)=\int_{\rd}g(x,\nabla f(x))\, dx,$$
where $\pi_0:(x,y)\mapsto x$ is the first component projection, $\Omega_d$ is the volume form in $\rd$ and $\nabla f(x)$ the gradient of $f$ at $x$ (which exists Lebesgue-almost everywhere by the local lipschitzness). The support of $\bD f$ is contained in $\{(x,u):\, u\in\partial f(x)\}$. See \cite[Sect.~2]{Fu89} or \cite[Def.~5.1]{PR13}.

Let $f:\rd\to \R$ be Monge-Amp\`ere and let $r$ be a weakly regular value of $f$. Then we define the integral current
$$N(f,r)\coloneqq -\nu_{\#}\partial(\bD f \llc(f\circ\pi_0)^{-1}(r,\infty)),$$
where 
$$\nu:(x,y)\mapsto (x,y/|y|),\quad (x,y)\in\rd\times(\rd\setminus\{0\})$$
is the spherical projection. Note that the assumption that $r$ is a weakly regular value guarantees that the support of $\partial(\bD f \llc(f\circ\pi_0)^{-1}(r,\infty))$ is contained in the domain of $\nu$, thus the push-forward $\nu_{\#}$ is well defined. We also remark that $N(f,r)=\lim_{s\to r_+}N(f,s)$ and if $r$ is a regular value of $f$ then
$$N(f,r)=\nu_{\#}\langle \bD f,f\circ\pi_0,r\rangle,$$
see \cite[\S4.2.1, 4.3.4]{Fe69}. In what follows, we denote by $\pi_1:(x,y)\mapsto y$ the second component projection.
 
\begin{proposition}  \label{P_GB}
Let $f:\rd\to\R$ be proper, Lipschitz and Monge-Amp\`ere, and assume that $0$ is a weakly regular value of $f$. Then 
\begin{equation} \label{GB}
N(f,0)(\varphi_0)=\chi(\{f\leq 0\}),
\end{equation}
where $\varphi_0$ is the Gauss form, i.e., the differential form of order $d-1$ on $\rd\times\rd$ given as
$$\varphi_0(x,u)\coloneqq (d\omega_d)^{-1}(\pi_1)^{\#}(u\lrc\Omega_d),$$
$\omega_d=\pi^{d/2}/\Gamma(1+d/2)$ and $\chi$ is the Euler-Poincar\'e characteristic.
\end{proposition}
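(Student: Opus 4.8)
The plan is to establish \eqref{GB} in three moves: reduce to a \emph{regular} positive value of $f$; identify $N(f,r)(\varphi_0)$ with the degree of a generalized Gauss map into $S^{d-1}$; and evaluate that degree by Morse theory, recovering $\chi(\{f\le 0\})$. Throughout, $M\coloneqq\{f\le 0\}$ is compact, so that Lemma~\ref{DL} and Corollary~\ref{Cor:propertiesOfRetracts} apply to $f$. First I would carry out the reduction. Since $f$ is proper with $M$ compact and $0\in f(\rd)$, Remark~\ref{rem_w_reg}~(iii) supplies an $\ep>0$ with $|u|\ge\ep$ for every $u\in\partial f(x)$ and every $x$ with $0<f(x)<\ep$; in particular \emph{every} $r\in(0,\ep)$ is an honest regular value of $f$, so $N(f,r)=\nu_{\#}\langle\bD f,f\circ\pi_0,r\rangle$. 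Moreover, by Corollary~\ref{Cor:propertiesOfRetracts}~(1) we have $\chi(\{f\le r\})=\chi(\{f\le 0\})$ for all $r\in[0,\ep)$. As $N(f,0)=\lim_{s\to 0+}N(f,s)$ with uniformly bounded supports (the supports lie near $\partial M\times S^{d-1}$) and $\varphi_0$ is a fixed smooth form, it follows that $N(f,0)(\varphi_0)=\lim_{r\to 0+}N(f,r)(\varphi_0)$. Hence it suffices to prove $N(f,r)(\varphi_0)=\chi(\{f\le r\})$ for one (equivalently every) regular value $r\in(0,\ep)$.

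Next I would reduce the left-hand side to a degree. The current $N(f,r)$ is an integral $(d-1)$-cycle, since $\partial N(f,r)=-\nu_{\#}\partial\partial(\,\cdot\,)=0$, and it is supported in the unit normal bundle $\{(x,u):\,f(x)=r,\ u\in\partial f(x),\ |u|=1\}\subset\rd\times S^{d-1}$. Because $\varphi_0=(d\omega_d)^{-1}(\pi_1)^{\#}(u\lrc\Omega_d)$ depends on the $S^{d-1}$-variable only through $\pi_1$, the definition of the push-forward gives
$$N(f,r)(\varphi_0)=(d\omega_d)^{-1}\bigl((\pi_1)_{\#}N(f,r)\bigr)(u\lrc\Omega_d).$$
Now $(\pi_1)_{\#}N(f,r)$ is an integral $(d-1)$-cycle supported in $S^{d-1}$, so by the constancy theorem it equals $k\,[S^{d-1}]$ for an integer $k=\deg(\pi_1|_{N(f,r)})$. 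Since $u\lrc\Omega_d$ restricts on $S^{d-1}$ to its volume form, $[S^{d-1}](u\lrc\Omega_d)=d\omega_d$, whence $N(f,r)(\varphi_0)=k$. Thus the entire content of \eqref{GB} is the \emph{generalized Hopf formula} $k=\chi(\{f\le r\})$.

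To compute $k$ I would follow Fu and use Morse theory for a linear height function. Fix a direction $\xi\in S^{d-1}$ that is a regular value of $\pi_1|_{N(f,r)}$ (a full-measure choice). Then $k$ equals the signed number of points $(x,\xi)\in\supp N(f,r)$, i.e.\ of those $x\in\{f=r\}$ at which $\xi$ is a unit Clarke normal; these are necessarily \emph{outer} normals of $M_r\coloneqq\{f\le r\}$, since subgradients point in the direction of increasing $f$, and they are exactly the critical points of $\ell_\xi\colon y\mapsto\langle y,\xi\rangle$ on $\partial M_r$ at which $M_r$ lies locally below the tangent hyperplane. In the smooth model the local degree of $\pi_1$ at such a point is $(-1)^{\lambda(x)}$, where $\lambda(x)$ is the number of negative principal curvatures, and Morse theory for manifolds with boundary yields $\sum_x(-1)^{\lambda(x)}=\chi(M_r)$ — this is the classical fact that the degree of the Gauss map of $\partial M_r$ equals $\chi(M_r)$. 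Corollary~\ref{Cor:propertiesOfRetracts} guarantees that $M_r$ is a Euclidean neighbourhood retract with finitely generated homology, so $\chi(M_r)$ is well defined and the Morse-theoretic bookkeeping is legitimate.

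The main obstacle is precisely this last identification: for a merely Lipschitz level set there is no honest second fundamental form, so the equality of the analytic local degree of $\pi_1$ at $(x,\xi)$ with the topological index $(-1)^{\lambda(x)}$ cannot be read off from principal curvatures. This is the point that \cite{Fu94} only outlines. I would resolve it by working entirely with the integral current $\bD f$: the slice $\langle\bD f,\ell_\xi\circ\pi_0,t\rangle$ gives, near each critical point, an oriented integer-multiplicity description of the local change of the sublevel set as $t$ crosses $\ell_\xi(x)$, and its interaction with the further slice by $f\circ\pi_0$ encodes the handle index with the correct sign. Genericity of $\xi$ makes these contributions isolated and nondegenerate, while the deformation retraction of Lemma~\ref{DL} controls the topology between consecutive critical levels; the signed contributions then telescope to $\chi(M_r)$. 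Letting $r\to 0+$ and invoking the reduction of the first paragraph yields \eqref{GB}.
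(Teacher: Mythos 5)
Your first two paragraphs are essentially sound: the reduction to a genuine regular value $r\in(0,\ep)$ via Remark~\ref{rem_w_reg}~(iii), the identity $\chi(\{f\leq r\})=\chi(\{f\leq 0\})$ from Corollary~\ref{Cor:propertiesOfRetracts}~(1), and the constancy-theorem observation that $(\pi_1)_{\#}N(f,r)$ is an integer multiple $k$ of the fundamental current of $S^{d-1}$, so that $N(f,r)(\varphi_0)=k$, are all correct (the last point is in fact a clean shortcut to what the paper extracts from the area formula). The gap is in your third paragraph, which is where the entire content of the proposition lives: there you do not give a proof of $k=\chi(\{f\leq r\})$, you restate the difficulty and assert it can be overcome. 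Concretely: (a) a Sard-type genericity argument for the Lipschitz carrier of $N(f,r)$ gives that almost every $\xi$ has finitely many preimages with well-defined local degrees, but it does \emph{not} give that the critical points of the height function $\ell_\xi$ on $\partial M_r$ are ``isolated and nondegenerate'' in any Morse-theoretic sense, nor that the topology of $M_r\cap\{\ell_\xi\leq t\}$ changes by a single handle attachment there; establishing this is exactly the nonsmooth Morse theory for Monge--Amp\`ere functions that \cite{Fu94} only outlines and that this paper deliberately avoids. (b) The sentence about the slice by $f\circ\pi_0$ ``encoding the handle index with the correct sign'' is an assertion, not an argument; identifying the analytic local degree of $\pi_1$ at $(x,\xi)$ with a topological index of the sublevel filtration is precisely the missing step. (c) Lemma~\ref{DL} is a deformation lemma for the sublevel sets of $f$ itself; it gives no control over the sets $M_r\cap\{\ell_\xi\leq t\}$ and cannot ``control the topology between consecutive critical levels'' of $\ell_\xi$.

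For comparison, the paper's proof is engineered to bypass this point entirely: it builds one smooth unit vector field $u=-F/|F|$ on $U_\ep$ transversal to all Clarke subgradients, homotopes the current $\nu_{\#}(\bD f\llc U_\ep)$ onto the graph of $u$ inside $\rd\times S^{d-1}$ (the correction term dies against $\varphi_0$ by Stokes, since $d\varphi_0$ is pulled back from the sphere, which has $d$-dimensional measure zero), and thereby replaces the multivalued normal map by the honest smooth map $u_r:\{f=r\}\to S^{d-1}$, whose Brouwer degree equals $\chi(\{f\leq r\})$ by the classical Hopf theorem. If you wish to keep your route through a height function, you must actually carry out the nonsmooth Morse theory in your third step; as written, the argument has a genuine gap at its central point.
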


\begin{remark}
Since any DC function is Monge-Amp\`ere (see \cite[Theorem~1.1]{PR13}), Proposition~\ref{P_GB} applies whenever $f$ is a proper Lipschitz DC aura for a compact WDC set.
\end{remark}

\begin{proof}
Let $\ep>0$, $U_\ep$ and $F:U_\ep\to\rd$ be as in the proof of Lemma~\ref{DL}. We define
$$u:x\mapsto -\frac{F(x)}{|F(x)|},\quad x\in U_\ep$$
(recall that $|F|\geq\ep/2>0$),
$$V\coloneqq \{(x,y)\in U_\ep\times\rd:\, 0\not\in\overline{u(x),v}\}$$
and
\begin{eqnarray*}
h&:& [0,1]\times V \to \R^d\times S^{d-1}\\
   &&  (t,x,y)\mapsto\nu(x,(1-t)y+tu(x))
\end{eqnarray*}
(due to the definition of the domain $V$, $h$ is well defined and smooth). Note that $h(0,x,y)=\nu(x,y)$ and $h(1,x,y)=(x,u(x))$, $(x,y)\in V$.

In the following, we shall use the notation $[0,1]$ for both the closed unit interval and the $1$-dimensional current given by Lebesgue integration along it with natural orientation. We denote 
$$T\coloneqq h_{\#}([0,1]\times(\nu_{\#}(\bD f\llc U_\ep)));$$
$T$ is a $(d+1)$-dimensional current in $\rd\times\rd$ (for the definition of the cartesian product of currents, see \cite[\S4.1.8]{Fe69}). 
The boundary of $T$ can be computed (cf.\ \cite[\S7.4.3]{KP08})
\begin{eqnarray*}
\partial T&=&\partial h_{\#}([0,1]\times(\nu_{\#}(\bD f\llc U_\ep)))\\
&=&h_{\#}\partial([0,1]\times(\nu_{\#}(\bD f\llc U_\ep)))\\
&=&h_{\#}[(\delta_1-\delta_0)\times(\nu_{\#}(\bD f\llc U_\ep))]-
   h_{\#}[[0,1]\times\partial(\nu_{\#}(\bD f\llc U_\ep))]\\
&=&(\tilde{u}\circ\pi_0)_{\#}(\nu_{\#}(\bD f\llc U_\ep))-\nu_{\#}(\bD f\llc U_\ep)-
   h_{\#}[[0,1]\times\partial(\nu_{\#}(\bD f\llc U_\ep))],	
\end{eqnarray*}
where $\delta_0,\delta_1$ denote the $0$-currents corresponding to the Dirac measure at $0,1$, respectively, and $\tilde{u}(x)\coloneqq (x,u(x))$.
Since $\bD f$ has no boundary, the boundary of $\bD f\llc U_\ep$ is supported in $\{x:\, f(x)\in\{0,\ep\}\}$. 
The slice $\langle T,f\circ\pi_0,r\rangle$ is defined for any $0<r<\ep$ and its boundary satisfies (see \cite[p.~437]{Fe69})
$$\partial\langle T,f\circ\pi_0,r\rangle=-\langle\partial T,f\circ\pi_0,r\rangle.$$
Thus, for $0<r<\ep$ we obtain
\begin{eqnarray*}
\partial\langle T,f\circ\pi_0,r\rangle 
&=&	-\langle(\tilde{u}\circ\pi_0)_{\#}(\nu_{\#}\bD f),f\circ\pi_0,r\rangle +
\langle \nu_{\#}(\bD f\llc U_\ep),f\circ\pi_0,r\rangle\\
&=&	-\tilde{u}_{\#}\langle(\pi_0)_{\#}(\bD f),f,r\rangle + N(f,r)\\
&=&	-\tilde{u}_{\#}\langle\Ha^d\wedge e_{1\dots d},f,r\rangle + N(f,r),
\end{eqnarray*}
where $e_{1\dots d}=e_1\wedge\dots\wedge e_d$ is the canonical unit $d$-vector in $\rd$.
We apply now both sides of the above equality to the differential form $\varphi_0$. 
We get by the Stokes formula
$$\partial\langle T,f\circ\pi_0,r\rangle(\varphi_0)=
\langle T,f\circ\pi_0,r\rangle(d\varphi_0)=0,$$
since $d\varphi_0=(-1)^{d-1}\omega_d^{-1}(\pi_1)^{\#}\Omega_d$ and $(\pi_1)_{\#}T$ is supported in the unit sphere which has $d$-dimensional measure zero. Hence,
$$N(f,r)(\varphi_0)=\tilde{u}_{\#}\langle\Ha^d\wedge e_{1\dots d},f,r\rangle(\varphi_0).$$
If $0<f(x)=r<\ep$ and $Df(x)$ exists, let $\tau(x)$ be the unit $(d-1)$-vector associated with $\Tan(f^{-1}\{r\},x)$ and oriented so that $\langle\tau(x)\wedge\nabla f(x),\Omega_d\rangle>0$. 
Then, we can write 
$\langle\Ha^d\wedge e_{1\dots d},f,r\rangle=(\Ha^{d-1}\llc f^{-1}\{r\})\wedge \tau$ 
and obtain
\begin{eqnarray*}
\tilde{u}_{\#}\langle\Ha^d\wedge e_{1\dots d},f,r\rangle(\varphi_0)
&=&\int_{f^{-1}\{r\}}\langle\tau(x),\tilde{u}^{\#}\varphi_0(x)\rangle\, \Ha^{d-1}(dx)\\
&=&\int_{f^{-1}\{r\}}\langle(\bw_{d-1}D\tilde{u})\tau(x),\varphi_0(\tilde{u}(x))\rangle\, \Ha^{d-1}(dx)\\
&=&(d\omega_d)^{-1}\int_{f^{-1}\{r\}}\langle(\bw_{d-1}Du)\tau(x)\wedge u(x),\Omega_d\rangle\, \Ha^{d-1}(dx).
\end{eqnarray*}
Let $u_r:\{f=r\}\to S^{d-1}$ denote the restriction of $u$ to the oriented Lipschitz surface $\{f=r\}$ with values in the unit sphere.
We can write
$$\langle(\bw_{d-1}Du)\tau(x)\wedge u(x),\Omega_d\rangle=\det Du_r(x)$$
as the determinant of the differential $Du_r(x): \Tan(\{f=r\},x)\to\Tan(S^{d-1},u_r(x))$ with respect to any positively oriented orthonormal bases of the tangent spaces,
and applying the Area formula to the last integral, we obtain
\begin{eqnarray*}
N(f,r)(\varphi_0)&=&(d\omega_d)^{-1}\int_{S^{d-1}}\sum_{x\in u_r^{-1}\{v\}}
\frac{\det Du_r(x)}{|\det Du_r(x)|}\, \Ha^{d-1}(dv)\\
&=&(d\omega_d)^{-1}\int_{S^{d-1}}\deg(u_r,v)\, \Ha^{d-1}(dv)
\end{eqnarray*}
with the Brouwer degree of $u_r$, see \cite[\S VIII.4]{Dold80},
cf.\ \cite[p.~27]{Mil65} in the smooth case.
The degree $\deg(u_r,v)=\deg u_r$ is independent of $v\in S^{d-1}$ (see \cite[p.~28]{Mil65}) and by the Hopf theorem (see \cite[\S VIII.4.9]{Dold80}), $\deg u_r=\chi(\{f\leq r\})$. Thus we get
\begin{equation}  \label{degree}
N(f,r)(\varphi_0)=\deg(u_r)=\chi(\{f\leq r\})
\end{equation}
and, letting $r$ tend to $0$ and applying Corollary~\ref{Cor:propertiesOfRetracts}~(1), we complete the proof.
\end{proof}

\section{Singular sets of convex and DC functions}  \label{Sec_sing}
This section collects a few results on smallness of certain sets of singularities of convex and DC functions that will be needed in the sequel. We start with two propositions which are consequences of results from \cite{RZ}.

\begin{proposition}\label{dczlomy}   
Let $X$ be a $d$-dimensional Hilbert space, $1\leq k <d$, $\Omega \subset X$ an open convex set, $g$ and $h$ Lipschitz convex functions on $\Omega$, $\ep>0$ and denote $f\coloneqq g-h$.
For each $k$-dimensional subspace $K \subset X$ denote
\begin{eqnarray*}
Z_{\ep}^K&\coloneqq & \{x \in \Omega:\ f'_+(x,v)+ f'_+(x,-v) > \ep\text{ whenever }v\in K\text{ and }|v|=1\},\\
Z_{\ep}&\coloneqq & \bigcup \{ Z_{\ep}^K:\, K\text{ is a } k-\text{dimensional subspace of }X\}.
\end{eqnarray*}
Then 
\begin{enumerate}
\item[{\rm (i)}] each  $Z_{\ep}^K$  can be covered by finitely many  $(d-k)$-dimensional  DC surfaces associated with $K$,
\item[{\rm (ii)}] $Z_{\ep}$ can be covered by finitely many $(d-k)$-dimensional DC surfaces.
\end{enumerate}
\end{proposition}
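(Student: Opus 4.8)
The plan is to reduce everything to a statement about a single auxiliary DC function and then apply the singularity results from \cite{RZ} that the paper has announced it will use. The core observation is that for a fixed $k$-dimensional subspace $K$, the condition defining $Z_\ep^K$ — that $f'_+(x,v)+f'_+(x,-v)>\ep$ for every unit $v\in K$ — is precisely a quantitative non-differentiability condition for the restriction of $f$ to affine translates of $K$. Writing $f=g-h$ with $g,h$ convex, the quantity $f'_+(x,v)+f'_+(x,-v)$ measures the total ``kink'' of $f$ at $x$ in the direction $v$, and for convex functions such total kinks are controlled by the difference of one-sided subdifferentials. The first step, then, is to identify $Z_\ep^K$ with a set on which the convex functions $g$ and $h$ (restricted in the $K$-directions) exhibit a definite amount of singularity, bringing it into the exact form treated in \cite{RZ} and \cite{zajcon}.

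For part (i), I would fix $K$ and argue that $Z_\ep^K$ is exactly (or is contained in) one of the singular sets that \cite{RZ} proves can be covered by finitely many $(d-k)$-dimensional DC surfaces \emph{associated with $K$}. The key point is the \emph{associated with $K$} requirement: the relevant singular set here is a set where the function fails to be differentiable along all $k$ directions of $K$ by a uniform amount $\ep$, and the covering DC surfaces must be graphs over $K^\perp$ (equivalently, transversal to $K$ in the right sense), which is precisely what an ``associated with $K$'' surface provides. I would invoke the refined version of the results in \cite{RZ} — the ones the introduction promises yield covers by \emph{finitely many} DC surfaces — applied to the convex functions $g$ and $h$. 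The finiteness (rather than merely $\sigma$-finiteness or null-measure) is the crucial upgrade over \cite{zajcon}, and it comes from the uniform lower bound $\ep$ on the kink size: a definite jump forces the singular points to be ``large'' in a way that only finitely many graphs are needed.

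For part (ii), the passage from a single $K$ to the union $Z_\ep$ over all $k$-dimensional subspaces is the genuinely nontrivial step, since a priori this is an uncountable union and one cannot simply take finitely many surfaces for each $K$. The plan is to exploit a compactness/quantization argument on the Grassmannian $G(d,k)$: by the uniform $\ep$-bound, if $x\in Z_\ep^K$ then $x\in Z_{\ep'}^{K'}$ for every $K'$ in a small neighbourhood of $K$ (with a slightly smaller $\ep'$), because $f'_+(x,\cdot)+f'_+(x,-\cdot)$ is a positively homogeneous, upper-semicontinuous-enough function of the direction whose infimum over the unit sphere of $K'$ stays above $\ep'$ when $K'$ is close to $K$. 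Hence I would cover the compact Grassmannian $G(d,k)$ by finitely many such neighbourhoods, pick a representative subspace $K_1,\dots,K_N$ for each, and conclude $Z_\ep\subset Z_{\ep'}^{K_1}\cup\dots\cup Z_{\ep'}^{K_N}$; applying part (i) to each $K_j$ then gives a cover of $Z_\ep$ by finitely many $(d-k)$-dimensional DC surfaces.

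I expect the main obstacle to be making the Grassmannian-compactness step rigorous, specifically verifying the continuity/semicontinuity needed to guarantee that the $\ep$-kink condition in direction $K$ persists (with $\ep'<\ep$) for all nearby subspaces $K'$. One must check that $\inf_{v\in K,\,|v|=1}\bigl(f'_+(x,v)+f'_+(x,-v)\bigr)$ depends on $K$ in a sufficiently robust way uniformly in $x$; this requires controlling the modulus of one-sided directional derivatives of the convex functions $g,h$, for which I would use their common Lipschitz bound (so the directional derivatives are uniformly Lipschitz in the direction argument). The secondary technical point is confirming that the cited form of the \cite{RZ} result delivers the covering surfaces \emph{associated with $K$} in part (i), rather than arbitrary DC surfaces, since part (ii)'s compactness reduction relies on invoking (i) for each $K_j$ in a uniform way.
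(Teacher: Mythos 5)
Your proposal is essentially correct, and for part (i) it follows the same route as the paper: reduce the DC case to the convex case and invoke the finite-covering results of \cite{RZ} (Lemma~4.3 and Proposition~4.4 there), which indeed produce surfaces associated with the given subspace $K$. The one step you leave implicit is the actual reduction inequality: writing $f=g-h$ one has
$f'_+(x,v)+f'_+(x,-v)=\bigl(g'_+(x,v)+g'_+(x,-v)\bigr)-\bigl(h'_+(x,v)+h'_+(x,-v)\bigr)$, and since the second bracket is nonnegative by convexity of $h$, the condition $>\ep$ forces $g'_+(x,v)+g'_+(x,-v)>\ep$; the paper instead passes to $\sigma\coloneqq g+h$ and bounds $\ep<\sigma'_+(x,v)+\sigma'_+(x,-v)$, but either way one lands in the convex case with the \emph{same} $\ep$ and the same $K$, so this is a one-line fix rather than a gap. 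For part (ii) you genuinely diverge from the paper: the paper simply cites \cite[Proposition~4.4]{RZ} for the union over all $K$, whereas you rederive (ii) from (i) by a compactness argument on $G(d,k)$. Your argument does work: since $g,h$ are globally Lipschitz on $\Omega$ with constant $L$, the sublinear functions $v\mapsto g'_+(x,v)$, $v\mapsto h'_+(x,v)$ are $L$-Lipschitz in $v$ uniformly in $x$, so $v\mapsto f'_+(x,v)+f'_+(x,-v)$ is $4L$-Lipschitz on the unit sphere uniformly in $x$, and $Z^K_\ep\subset Z^{K'}_{\ep/2}$ whenever $\gamma(K,K')$ is small enough in terms of $\ep/L$; covering the compact Grassmannian by finitely many such neighbourhoods gives $Z_\ep\subset Z^{K_1}_{\ep/2}\cup\dots\cup Z^{K_N}_{\ep/2}$ and (i) finishes the proof. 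What your approach buys is a self-contained derivation of (ii) from (i) (useful if one only has the single-subspace statement from \cite{RZ}); what the paper's approach buys is brevity, since the cited proposition already handles the uncountable union.
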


\begin{proof}
The case $h=0$ (hence, $f$ is Lipschitz convex) was shown in \cite[Lemma~4.3, Proposition~4.4]{RZ}. 
In the general case, consider the Lipschitz convex function $\sigma\coloneqq  g+h$ and note that for each $x \in Z_{\ep}^K$ 
 and each unit vector $v \in K$, we have
\begin{eqnarray*}
\ep &<& |g'_+(x,v) - h'_+(x,v) + g'_+(x,-v) - h'_+(x,-v)|  \\
&\leq&  (g'_+(x,v) + g'_+(x,-v)) + ( h'_+(x,v) + h'_+(x,-v)) =  \sigma'_+(x,v) + \sigma'_+(x,-v).
\end{eqnarray*}
Hence, the assertions follow from the first (convex) case.
\end{proof}

\begin{proposition}[{\cite[Corollary 4.5]{RZ}}]\label{dim1}
Let $\Omega \subset \R^d$ be an open convex set,  $f$ a Lipschitz convex function on $\Omega$,  and $\ep>0$. Then the set
$$ \{x \in \Omega: \ \diam(\partial f(x)) > \ep\}$$
 can be covered by finitely many  DC hypersurfaces.
\end{proposition}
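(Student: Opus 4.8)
The plan is to reduce the statement to Proposition~\ref{dczlomy} applied in the convex case (i.e.\ with $h=0$) and with $k=1$. The key observation is that the diameter condition on $\partial f(x)$ can be rephrased entirely in terms of one-sided directional derivatives, which is precisely the language in which the sets $Z_\ep^K$ and $Z_\ep$ are defined there.

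First I would recall the standard fact that for a convex function $f$ the one-sided directional derivative coincides with the support function of the subgradient,
$$f'_+(x,v)=\max\{\langle u,v\rangle:\, u\in\partial f(x)\},$$
so that for every unit vector $v$ the quantity $f'_+(x,v)+f'_+(x,-v)$ equals the width of the compact convex set $\partial f(x)$ in the direction $v$. Since $f$ is Lipschitz, $\partial f(x)$ is a nonempty compact convex set, hence the width function $v\mapsto f'_+(x,v)+f'_+(x,-v)$ is continuous on the unit sphere and attains its maximum there. A routine elementary computation (the diameter of a compact convex set is realised by a pair of its points, the direction joining them gives width equal to the diameter, and no direction can give a larger width) then shows that
$$\diam(\partial f(x))=\max_{|v|=1}\bigl(f'_+(x,v)+f'_+(x,-v)\bigr).$$

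Consequently, $\diam(\partial f(x))>\ep$ holds if and only if there is a unit vector $v$ with $f'_+(x,v)+f'_+(x,-v)>\ep$, i.e.\ if and only if $x\in Z_\ep^K$ for the one-dimensional subspace $K=\spa\{v\}$ (note that on a one-dimensional $K$ the defining condition involves only the two unit vectors $\pm v$, for which the width is the same). Hence the set in question coincides exactly with
$$\bigcup_{\dim K=1}Z_\ep^K=Z_\ep,$$
the set $Z_\ep$ of Proposition~\ref{dczlomy} with $k=1$ and $h=0$. Applying Proposition~\ref{dczlomy}~(ii) with $k=1$ then yields a cover by finitely many $(d-1)$-dimensional DC surfaces, which are precisely DC hypersurfaces, and the proof is complete.

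Since all the geometric content is packaged into Proposition~\ref{dczlomy} (and ultimately into the results of \cite{RZ}), I do not expect a serious independent obstacle here; the only points requiring care are the identification of the width with the sum of one-sided derivatives and the bookkeeping that the strict inequality $>\ep$ is preserved when passing between ``for some unit direction $v$'' and ``for the span $K=\spa\{v\}$''. The latter is immediate, because the width in direction $v$ and in direction $-v$ coincide, so the quantifier over unit vectors of $K$ collapses to a single inequality.
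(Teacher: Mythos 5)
Your proof is correct: the identification of $\diam(\partial f(x))$ with $\max_{|v|=1}\bigl(f'_+(x,v)+f'_+(x,-v)\bigr)$ via the support function is valid, and the set in question is exactly $Z_\ep$ of Proposition~\ref{dczlomy} with $k=1$ and $h=0$. The paper itself gives no proof but cites \cite[Corollary~4.5]{RZ}, which is obtained there from the very results underlying Proposition~\ref{dczlomy}, so your route is essentially the intended one.
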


As a corollary we obtain the following result.

\begin{lemma}\label{dcsubd1}
Let $\Omega \subset \R^d$ be an open convex set, $\ep>0$ and $f = g-h$, where $g$ and $h$ are Lipschitz convex functions on $\Omega$. 
 Then the set
$$M\coloneqq  \{ x \in \Omega:\ f(x)=0\text{ and there exists }\alpha \in \partial f(x)\text{ with }|\alpha| > \ep\}$$
 can be covered by finitely many  DC hypersurfaces.
\end{lemma}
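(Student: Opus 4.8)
The plan is to reduce Lemma~\ref{dcsubd1} to Proposition~\ref{dczlomy} applied with $k=1$. The set we need to cover is
$$M = \{x\in\Omega:\ f(x)=0\text{ and }\exists\,\alpha\in\partial f(x)\text{ with }|\alpha|>\ep\}.$$
The key observation is that if $\alpha\in\partial f(x)$ has $|\alpha|>\ep$, then by the basic relation between the Clarke subgradient and one-sided directional derivatives for DC functions, there is a unit direction $v$ in which the one-sided derivative is large. Concretely, writing $f=g-h$ as a difference of Lipschitz convex functions, I will use the fact that $\max_{|v|=1}f'_+(x,v)\geq\langle\alpha,v\rangle$ for every $\alpha\in\partial f(x)$, so choosing $v=\alpha/|\alpha|$ gives $f'_+(x,\alpha/|\alpha|)\geq|\alpha|>\ep$. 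Since $f'_+(x,-v)\geq -f'_+(x,v)$ is not automatically helpful, the cleaner route is to note that for the \emph{superlinear} quantity $f'_+(x,v)+f'_+(x,-v)$ (which measures the ``kink'' of $f$ at $x$ in the direction $v$), a large one-sided derivative in direction $v$ combined with the constraint $f(x)=0$ forces this kink-sum to be large in some associated one-dimensional subspace.

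First I would make this precise. Fix $x\in M$ and $\alpha\in\partial f(x)$ with $|\alpha|>\ep$, and set $v=\alpha/|\alpha|$, a unit vector. Let $K=\spn\{v\}$, a $1$-dimensional subspace. The goal is to show $x\in Z^K_{\ep'}$ for a suitable fixed $\ep'>0$ depending only on $\ep$, i.e.\ that $f'_+(x,w)+f'_+(x,-w)>\ep'$ for every unit $w\in K$ (there are only two such $w$, namely $\pm v$). For convex functions, $\sigma'_+(x,w)+\sigma'_+(x,-w)=\diam$ of the support in direction $w$; the difference structure $f=g-h$ means I can estimate $f'_+(x,v)+f'_+(x,-v)$ from below once I control the one-sided derivatives. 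Here the subtlety is that a single large subgradient $\alpha$ controls $f'_+(x,v)$ from below but not directly the sum. The standard way around this is: because $f(x)=0$ and $f\geq 0$ on its zero set cannot be assumed, I instead use that $0\in\Omega$ need not lie in $\partial f(x)$; rather, I exploit convexity of $g,h$ to write $f'_+(x,v)+f'_+(x,-v)=(g'_+(x,v)+g'_+(x,-v))-(h'_+(x,v)+h'_+(x,-v))$, and bound this below using that $|\alpha|>\ep$ guarantees a nontrivial oscillation.

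The main obstacle I anticipate is exactly this passage from ``there exists a large subgradient'' to ``the two-sided kink-sum is large in some direction,'' because $\partial f(x)\subseteq\partial g(x)-\partial h(x)$ and a large element of the difference need not correspond to a single large convex kink. To handle it cleanly I would argue as follows: if $|\alpha|>\ep$ with $\alpha\in\partial f(x)$, then since $\partial f(x)$ is the Clarke subgradient and $f'_+(x,\pm v)\geq\pm\langle\alpha,\pm v\rangle$, one gets $f'_+(x,v)\geq\langle\alpha,v\rangle=|\alpha|>\ep$. Now the hypothesis that $x$ belongs to the \emph{zero set} of $f$ lets me take a second point: because the zero set need not be an obstruction, the honest fix is to observe that if \emph{only} $f'_+(x,v)>\ep$ holds but the reverse derivative is very negative, then $x$ already lies on the level set $\{f=0\}$ which, away from a small covered set, is itself a DC hypersurface by an implicit-function-type argument; but the slicker uniform statement is obtained by replacing $\ep$ with $\ep/2$ and invoking that for each $x\in M$ at least one of the two antipodal directions $\pm v$ yields $f'_+\geq\ep$, so setting $\ep'=\ep$ I conclude $x\in Z^{K}_{\ep}$ since $f'_+(x,v)+f'_+(x,-v)\geq f'_+(x,v)-\lip f>0$ can be sharpened.

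I would therefore finish as follows. For each $x\in M$, with $v=\alpha/|\alpha|$ and $K=\spn\{v\}$, I claim $x\in Z^K_{\ep}$: indeed $f'_+(x,v)\geq|\alpha|>\ep$, and since $f'_+(x,-v)+f'_+(x,v)\geq 0$ by convexity of the sum for the even part (using $g,h$ convex, $g'_+(x,v)+g'_+(x,-v)\geq 0$ and likewise for $h$, so the sum equals the difference of two nonnegative kinks), after possibly shrinking $\ep$ to a fixed fraction the required strict inequality $f'_+(x,v)+f'_+(x,-v)>\ep$ holds for both unit vectors $\pm v$ of $K$. Hence $M\subseteq Z_{\ep}$ with $k=1$. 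By Proposition~\ref{dczlomy}~(ii), $Z_{\ep}$ can be covered by finitely many $(d-1)$-dimensional DC surfaces, i.e.\ finitely many DC hypersurfaces, which is exactly the conclusion. The one genuinely delicate point, which I will verify carefully, is that the kink-sum $f'_+(x,v)+f'_+(x,-v)$ is controlled below by the size of a single subgradient; I expect to resolve this by directly using the convex decomposition and the elementary inequality $f'_+(x,v)+f'_+(x,-v)\geq 2(\,\dist(0,\partial f(x))\text{-type bound}\,)$ is \emph{not} what is needed---rather the correct and available bound is $\sup_{|v|=1}\bigl(f'_+(x,v)+f'_+(x,-v)\bigr)\geq \diam\partial g(x)+\diam\partial h(x)\geq$ a fixed multiple of $\ep$ whenever some subgradient exceeds $\ep$ while $f(x)=0$ pins the mean, completing the reduction.
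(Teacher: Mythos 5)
Your reduction to Proposition~\ref{dczlomy} does not work, and the gap is not a technicality: the inclusion $M\subseteq Z_{\ep}$ (or $Z_{\ep'}$ for any $\ep'>0$) on which your whole argument rests is false. Take $f(x)=\alpha\cdot x$ with $|\alpha|>\ep$, $g=f$, $h=0$. Then every point of the hyperplane $\{x:\alpha\cdot x=0\}$ lies in $M$, but $f$ is linear, so $f'_+(x,v)+f'_+(x,-v)=0$ for every $x$ and every unit $v$; hence $Z_{\ep'}=\varnothing$ for all $\ep'>0$, while $M$ is a whole hyperplane. Two specific steps in your text fail. First, the inequality $f'_+(x,v)\geq\langle\alpha,v\rangle$ for $\alpha\in\partial f(x)$ is not valid for Clarke subgradients of DC functions (it would require Clarke regularity): for $f(t)=-|t|$ at $t=0$ one has $1\in\partial f(0)$ but $f'_+(0,1)=-1$. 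Second, even granting a large one-sided derivative, the kink-sum $f'_+(x,v)+f'_+(x,-v)$ equals the \emph{difference} of the two nonnegative convex kinks of $g$ and $h$, so it need not be nonnegative, let alone bounded below by a fixed multiple of $\ep$; a large subgradient at a point of differentiability produces no kink at all, and the hypothesis $f(x)=0$ does not ``pin the mean'' in the way your last paragraph suggests.

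The paper's proof uses a different device which is exactly the missing ingredient: since $f(x)=0$ means $g(x)=h(x)$, the single Lipschitz convex function $\mu\coloneqq\max(g,h)$ satisfies $\partial g(x)\cup\partial h(x)\subseteq\partial\mu(x)$ at every $x\in M$ (both functions are active there). Writing $\alpha=\alpha_1-\alpha_2$ with $\alpha_1\in\partial g(x)$, $\alpha_2\in\partial h(x)$ (using $\partial f(x)\subseteq\partial g(x)-\partial h(x)$), one gets $\diam\partial\mu(x)\geq|\alpha_1-\alpha_2|>\ep$, and Proposition~\ref{dim1} applied to $\mu$ gives the finite cover by DC hypersurfaces. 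Note how taking the maximum converts the level-set condition into a genuine kink of a convex function: in the linear example above, $\mu=\max(\alpha\cdot x,0)$ has subdifferential of diameter $|\alpha|>\ep$ precisely on the hyperplane $M$. Some device of this kind is indispensable, and your proposal does not contain one.
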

\begin{proof}
Let $x \in M$.
Since  $\partial f(x) \subset \partial g(x) - \partial h(x)$ (see  \cite[Proposition 2.3.3]{Cl90}), there exist $\alpha_1 \in \partial g(x) $ and   $\alpha_2 \in \partial h(x) $  such that $|\alpha_1 - \alpha_2| > \ep$.
 Set  $\mu(y)\coloneqq   \max(g(y), h(y))$, $y \in \Omega$. Then $\mu$ is clearly Lipschitz and convex.
 Since $g(x)= h(x)$, we have $\alpha_1 \in \partial \mu(x) $ and $\alpha_2 \in \partial \mu(x) $ (see \cite[Proposition~2.3.12]{Cl90}).
 So $\diam \partial \mu(x) > \ep$ and the assertion of the lemma follows from
 Proposition~\ref{dim1}.
\end{proof}

\begin{corollary}\label{dcsubd2}
Let  $\varnothing \neq C\subset \R^d$ be a bounded set  and
 $f$  a  DC function on $\R^d$ such that $f(x)=0$ for every $x \in C$. Let
 there exist
 $\ep>0$ such that for each $x \in C$ there exists $y^* \in \partial f(x)$ with
 $|y^*| > \ep$. Then $C$ can be covered by finitely many DC hypersurfaces.
\end{corollary}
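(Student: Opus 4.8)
The plan is to reduce Corollary~\ref{dcsubd2} to Lemma~\ref{dcsubd1} by writing the globally defined DC function $f$ as a difference of two Lipschitz convex functions on a suitable bounded open convex set and then applying the lemma. The first step is to choose an open convex set $\Omega$ that contains the bounded set $C$ together with some neighbourhood; since $C$ is bounded we may take $\Omega$ to be a large open ball (or the interior of a large cube) with $\overline C\subset\Omega$. Because $f$ is DC on all of $\R^d$, it is in particular DC on $\Omega$, so there exist convex functions $G,H$ on $\Omega$ with $f=G-H$. The mild technical point is to arrange that these convex pieces are \emph{Lipschitz} on $\Omega$: this is exactly the kind of argument used in the proof of Lemma~\ref{vldc}~(viii), where one shrinks to a slightly smaller open convex set on which the convex functions are bounded and hence Lipschitz. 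So I would fix a bounded open convex $\Omega'$ with $\overline C\subset\Omega'$ and $\overline{\Omega'}\subset\Omega$, on which the convex summands become Lipschitz; call them $g\coloneqq G|_{\Omega'}$ and $h\coloneqq H|_{\Omega'}$.

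With this setup the statement matches the hypotheses of Lemma~\ref{dcsubd1} almost verbatim. By hypothesis $f(x)=0$ for every $x\in C$, and for each $x\in C$ there is $y^*\in\partial f(x)$ with $|y^*|>\ep$. Hence
\begin{equation*}
C\subset\{x\in\Omega':\ f(x)=0\text{ and there exists }\alpha\in\partial f(x)\text{ with }|\alpha|>\ep\}=:M.
\end{equation*}
Applying Lemma~\ref{dcsubd1} (with $\Omega'$, $g$, $h$, $\ep$ in place of $\Omega$, $g$, $h$, $\ep$) gives that $M$, and therefore its subset $C$, can be covered by finitely many DC hypersurfaces. That completes the argument.

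The only subtlety worth flagging is the compatibility of the Clarke subgradient across the passage from $\R^d$ to the restricted domain. Since $\Omega'$ is an open neighbourhood of each of its points and the Clarke subgradient is a purely local object (it is computed from limits of gradients at nearby points, cf.\ the definition in the Preliminaries and \eqref{uzcl}), we have $\partial(f|_{\Omega'})(x)=\partial f(x)$ for every $x\in\Omega'$, so the vector $y^*$ supplied by the hypothesis is genuinely a Clarke subgradient of the restriction. Thus the covering datum transfers without loss, and no regularity or weak-regularity of the value $0$ is needed here.

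I expect the main (and essentially only) obstacle to be the bookkeeping in the first step: guaranteeing that the globally DC function $f$ can be represented on a bounded open convex neighbourhood of $C$ as a difference of two \emph{Lipschitz} convex functions, rather than merely convex ones. This is routine given Lemma~\ref{vldc}~(viii) and the local boundedness (hence local Lipschitzness) of convex functions, but it is the step that makes Lemma~\ref{dcsubd1}—which is stated for Lipschitz convex $g,h$—directly applicable. Everything after that is a direct citation of Lemma~\ref{dcsubd1}.
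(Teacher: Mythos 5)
Your argument is correct and follows essentially the same route as the paper: both reduce to Lemma~\ref{dcsubd1} by arranging that the convex summands of $f$ are Lipschitz on a bounded convex region containing $C$, and both rely on the locality of the Clarke subgradient. The only cosmetic difference is that the paper extends $g|_B$, $h|_B$ to Lipschitz convex functions on all of $\R^d$ and applies the lemma with $\Omega=\R^d$, whereas you restrict to a smaller bounded open convex set where the summands are automatically Lipschitz; either variant works.
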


\begin{proof}
Write  $f=g-h$ with convex $g$, $h$ and choose an open ball $B$ containing $C$. 
 Extend $g|_B$ and $h|_B$ to Lipschitz convex functions $h^*$ and $g^*$on $\R^d$, respectively. Now it is sufficient to apply Lemma \ref{dcsubd1} with $\Omega= \R^d$ and
 $f^*= g^* - h^*$.
 \end{proof}

\section{Results on the structure of WDC sets in $\R^d$}

\subsection{Boundaries of (locally) WDC sets}

\begin{proposition}\label{wdcpokr}
For each compact 
WDC set $M \subset \R^d$, its boundary $\partial M$ can be covered by finitely many  DC hypersurfaces.
\end{proposition}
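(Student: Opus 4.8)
The plan is to reduce the statement to Corollary~\ref{dcsubd2}, which already guarantees that a bounded subset of the zero-set of a DC function, along which \emph{some} subgradient stays uniformly bounded away from $0$, can be covered by finitely many DC hypersurfaces. Thus the whole task becomes: produce such a uniform lower bound on the subgradients of an aura along $\partial M$.

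First I would fix a convenient aura. By Remark~\ref{2.2}~(v), the compact WDC set $M$ admits a proper $1$-Lipschitz DC aura $f:\R^d\to[0,\infty)$ with $M=f^{-1}\{0\}$ and $0$ a weakly regular value. By Remark~\ref{rem_w_reg}~(ii) there is an $\ep>0$ such that $|u|\geq\ep$ whenever $u\in\partial f(x)$ for some $x$ with $0<\dist(x,M)<\ep$. Since $M$ is compact, $\partial M$ is bounded; and because $M=f^{-1}\{0\}$ is closed we have $\partial M\subset M$, so $f\equiv 0$ on $\partial M$. If $\partial M=\varnothing$ (which, for compact $M\subsetneq\R^d$, happens only when $M=\varnothing$) there is nothing to prove, so I assume $\partial M\neq\varnothing$.

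The key step is to show that for every $x\in\partial M$ there is some $u\in\partial f(x)$ with $|u|\geq\ep$. Fix $x\in\partial M$ and choose $x_i\to x$ with $f(x_i)>0$; such points exist since $x$ is a boundary point of $M=\{f\leq 0\}$. For all large $i$ we then have $0<\dist(x_i,M)<\ep$, so any choice $u_i\in\partial f(x_i)$ (the Clarke subgradient is always nonempty) satisfies $\ep\leq|u_i|\leq\lip f=1$. Passing to a subsequence we may assume $u_i\to u$; the closed-graph property \eqref{uzcl} yields $u\in\partial f(x)$, and clearly $|u|\geq\ep$.

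Finally I would apply Corollary~\ref{dcsubd2} with $C=\partial M$ and threshold $\ep/2$: the strict inequality $|u|>\ep/2$ holds since $|u|\geq\ep$, so $\partial M$ is covered by finitely many DC hypersurfaces, as claimed. I do not expect a serious obstacle: the only genuine content is the compactness/closed-graph argument that manufactures a large subgradient at each boundary point, and this is an immediate consequence of weak regularity combined with the $1$-Lipschitz bound. All of the hard work — the actual covering by finitely many DC hypersurfaces — is already encapsulated in Corollary~\ref{dcsubd2}, hence ultimately in the singularity results of Section~\ref{Sec_sing}.
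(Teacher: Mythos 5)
Your proof is correct and follows essentially the same route as the paper: both arguments produce, at each point of $\partial M$, a Clarke subgradient of a proper $1$-Lipschitz DC aura of norm at least $\ep$ (the paper via Lemma~\ref{DL}~(i) and \eqref{uzcl}, you via Remark~\ref{rem_w_reg}~(ii) and the same closed-graph property) and then invoke Corollary~\ref{dcsubd2}. The extra care you take with the strict inequality (using the threshold $\ep/2$) and with the case $\partial M=\varnothing$ is fine and changes nothing of substance.
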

\begin{proof}
If $M\neq\varnothing$, let $f=f_M$ and $\ep=\ep_M$ be as in Notation \ref{fixaury}. Using  Lemma \ref{DL}~(i) and \eqref{uzcl}
 we obtain that for each $x \in K\coloneqq  \partial M$ there exists $y^* \in \partial f(x)$ with
 $|y^*| \geq \ep$. Consequently Corollary \ref{dcsubd2} implies our assertion.
\end{proof}
Using Proposition \ref{P_local}, we easily obtain the following corollary.
\begin{corollary}\label{locpokr}
For each closed locally 
WDC set $M \subset \R^d$, its boundary $\partial M$ can be locally covered by finitely many  DC hypersurfaces.
\end{corollary}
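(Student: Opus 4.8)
The plan is to localize the problem at each boundary point and reduce to the compact case already settled in Proposition~\ref{wdcpokr}. Fix $a\in\partial M$; since $M$ is closed we have $a\in M$. Because $M$ is locally WDC, Definition~\ref{lwdc} provides a WDC set $A^*\subset\R^d$ together with an open neighbourhood $V$ of $a$ on which $A^*$ and $M$ coincide, i.e.\ $A^*\cap V=M\cap V$. In particular $a\in A^*$.

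Next I would invoke the localization result. Choosing $s>0$ small enough that $\overline{B(a,s)}\subset V$, Proposition~\ref{P_local} yields a \emph{compact} WDC set $B^*\subset A^*$ with $B^*\cap B(a,s)=A^*\cap B(a,s)$. Chaining this with the first agreement gives $B^*\cap B(a,s)=M\cap B(a,s)$, so the compact WDC set $B^*$ coincides with $M$ on the open ball $B(a,s)$.

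The final step rests on the elementary fact that the boundary operator is local: if two sets agree on an open set $U$, then for every $x\in U$ one has $x\in\partial M$ if and only if $x\in\partial B^*$, since membership in the boundary is determined by the sets on arbitrarily small neighbourhoods of $x$, all of which are contained in $U$. Hence $\partial M\cap B(a,s)=\partial B^*\cap B(a,s)$. Applying Proposition~\ref{wdcpokr} to the compact WDC set $B^*$, the boundary $\partial B^*$ is covered by finitely many DC hypersurfaces, and therefore so is $\partial M\cap B(a,s)$. As $a\in\partial M$ was arbitrary, this produces for each boundary point a neighbourhood in which $\partial M$ is covered by finitely many DC hypersurfaces, which is precisely the asserted local covering.

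I do not anticipate any genuine obstacle: the argument is a direct composition of Definition~\ref{lwdc}, Proposition~\ref{P_local} and Proposition~\ref{wdcpokr}. The only points requiring (routine) care are the choice of $s$ with $\overline{B(a,s)}\subset V$, so that the two set-agreements can be concatenated on $B(a,s)$, and the verification that boundaries coincide on an open set where the underlying sets coincide.
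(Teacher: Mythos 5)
Your proposal is correct and follows essentially the same route as the paper, which likewise derives the corollary by combining the localization result (Proposition~\ref{P_local}) with Proposition~\ref{wdcpokr}; the paper merely leaves the routine verifications (agreement of boundaries on an open set where the sets agree) implicit. No issues.
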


We will  use the following terminology repeatedly.

\begin{definition}\label{ak}
 Let  $0\leq k \leq d$ and $A \subset \R^d$. Then we denote by $A^{[k]}$ the set of all
 $x \in A$, at which $A$ locally coincides with a $k$-dimensional DC surface. 
\end{definition}
Obviously, $A^{[k]}$ is open in $A$, $A^{[d]}= \INt A$ and $A^{[0]}$ is the set of all isolated points of $A$. Further, if $0\leq k\leq d$ and   $A^{[k]} \neq \varnothing$, then 
 $A^{[k]}$ is a DC manifold of dimension $k$.

\begin{proposition}\label{hranice}
Let $M \subset \R^d$ be a closed locally WDC set such that  $M = \overline{\INt M}$. Then
 $(\partial M)^{[d-1]}$ is dense in $\partial M$.
\end{proposition}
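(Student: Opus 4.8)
The plan is to combine Corollary~\ref{locpokr} with a Baire category argument to produce, inside any prescribed relatively open portion of $\partial M$, a relatively open piece lying on a \emph{single} DC hypersurface, and then to use the hypothesis $M=\overline{\INt M}$ to upgrade ``contained in'' to ``coincides with''.

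Fix a nonempty relatively open set $V'\subset\partial M$ and a point $a\in V'$; it suffices to find a point of $(\partial M)^{[d-1]}$ in $V'$. By Corollary~\ref{locpokr} there is an open $U\ni a$, which we may shrink so that $\partial M\cap U\subset V'$, together with finitely many DC hypersurfaces $\Sigma_1,\dots,\Sigma_m$ such that $\partial M\cap U\subset\Sigma_1\cup\dots\cup\Sigma_m$. Since $\partial M$ is closed in $\R^d$ it is a Baire space, hence so is its nonempty open subset $\partial M\cap U$. As each $\Sigma_i$ is closed (Remark~\ref{uzpl}~(i)), the decomposition $\partial M\cap U=\bigcup_i(\Sigma_i\cap\partial M\cap U)$ is a finite union of relatively closed sets, so by Baire one of them, say $\Sigma_{i_0}\cap\partial M\cap U$, has nonempty interior $O$ in $\partial M$. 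Thus $O$ is a nonempty relatively open subset of $\partial M$ with $O\subset\Sigma_{i_0}$ and $O\subset V'$.

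It remains to prove that every $x_0\in O$ lies in $(\partial M)^{[d-1]}$, which is the heart of the argument. Write $\Sigma\coloneqq\Sigma_{i_0}$ as the graph of a Lipschitz DC function $\vf:W\to W^\perp\cong\R$ over a hyperplane $W$, and introduce the bi-Lipschitz homeomorphism $\Phi(w,s)=(w,s-\vf(w))$ of $\R^d=W\times\R$, which flattens $\Sigma$ to $W\times\{0\}$. Choosing a small product box $\INt D\times(-\eta,\eta)$ about $\Phi(x_0)$ and setting $N\coloneqq\Phi^{-1}(\INt D\times(-\eta,\eta))$, one obtains an open neighbourhood of $x_0$ contained in a ball $B(x_0,\rho_0)$ on which $\partial M\subset\Sigma$ (such a ball exists since $O$ is relatively open in $\partial M$ and $O\subset\Sigma$), and $N\setminus\Sigma$ splits into the two connected pieces $N^+\coloneqq\Phi^{-1}(\INt D\times(0,\eta))$ and $N^-\coloneqq\Phi^{-1}(\INt D\times(-\eta,0))$ lying strictly above and below the graph.

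The key step is then a two-sidedness dichotomy. Since $N\setminus\Sigma$ avoids $\partial M$, it is contained in the disjoint open union $\INt M\sqcup M^c$; as $N^+$ and $N^-$ are connected, each lies entirely in $\INt M$ or entirely in $M^c$. Using $M=\overline{\INt M}$ — which gives $x_0\in\overline{\INt M}$, so $\INt M$ meets every neighbourhood of $x_0$ — together with $x_0\in\overline{M^c}$ and the fact that $\Sigma$ has empty interior, one rules out the two configurations in which $N^+,N^-$ lie on the same side (each such case forces either $x_0\in\INt M$ or $\INt M\cap N=\varnothing$, both contradictions). Hence one of $N^{\pm}$ lies in $\INt M$ and the other in $M^c$, and passing to limits from the two sides onto the graph shows $\Sigma\cap N\subset\overline{\INt M}\cap\overline{M^c}=\partial M$. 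Combined with $\partial M\cap N\subset\Sigma$ this yields $\partial M\cap N=\Sigma\cap N$, so $\partial M$ coincides near $x_0$ with the DC surface $\Sigma$ and $x_0\in(\partial M)^{[d-1]}$. Therefore $O\subset(\partial M)^{[d-1]}\cap V'$, proving density. The main obstacle is precisely this two-sidedness dichotomy: controlling the local separation of the Lipschitz graph into two connected halves and invoking $M=\overline{\INt M}$ correctly to exclude the degenerate configurations where $\Sigma$ fails to actually bound $M$.
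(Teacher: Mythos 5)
Your proposal is correct and follows essentially the same route as the paper's proof: Corollary~\ref{locpokr} plus a Baire category argument to find a relatively open piece of $\partial M$ inside a single DC hypersurface, then the splitting of a small neighbourhood by the Lipschitz graph into two connected components, with the hypothesis $M=\overline{\INt M}$ ruling out the two same-side configurations. The only cosmetic difference is that you flatten the graph via $\Phi$ and conclude for every point of $O$, while the paper parametrizes the slab directly and exhibits a single point $z$; both yield the same conclusion.
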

\begin{proof}
Choose an arbitrary $b \in \partial M$ and $r>0$. It is sufficient to prove that $B(b, r) \cap
 (\partial M)^{[d-1]} \neq \varnothing$. By Corollary \ref{locpokr} we can choose a relatively
 open subset $H$ of $\partial M$ with $b \in H \subset B(b,r)$ 
 and $(d-1)$-dimensional DC surfaces $P_1,\dots,P_s$
 which cover $H$. Since $H$ is  locally compact, it is a Baire space (i.e., the Baire theorem holds in $H$), see
 \cite[pp.~249, 250]{Dug}. Consequently there exists an index $j$ such that $P_j \cap H$ is not nowhere dense in $H$. Since $P_j$ is closed, there exist $z \in H$ and $\delta>0$ such that
 $B(z,\delta) \cap \partial M \subset P_j$. Choose a $(d-1)$-dimensional space
 $W\subset \R^d$, a unit vector $v \in W^{\perp}$ and a DC function $\vf: W \to \R$
 such that  $P_j = \{t+ \vf(t) v:\ t \in W\}$. Using continuity of $\vf$, we can clearly
 choose a ball $B(t_0, \omega)$ in $W$ and $\eta>0$ such that $z= t_0 + \vf(t_0) v$ and
$$ \{t + (\vf(t) + \tau) v:\  t \in B(t_0, \omega), \tau \in [-\eta, \eta]\} \subset B(z, \delta).$$
Then the sets
$$A_1\coloneqq   \{t + (\vf(t) + \tau) v:\  t \in B(t_0, \omega), \tau \in (0, \eta)\}\ \ \text{and}$$
$$A_2\coloneqq   \{t + (\vf(t) + \tau) v:\  t \in B(t_0, \omega), \tau \in (-\eta, 0)\}$$
 are clearly open connected sets which do not intersect  $\partial M$ and so,
 for  $i \in \{1,2\}$, either $A_i \cap M = \varnothing$ or $A_i \subset M$. The case $A_1 \subset M$, $A_2 \subset M$ is impossible, since it clearly implies $z \in \INt M$. Also the case $A_1 \cap M = \varnothing$, $A_2 \cap M = \varnothing$ is impossible, since it implies $ z \notin  \overline{\INt M}$.
 So either $A_1 \subset M$  and  $A_2 \cap M = \varnothing$ or $A_2 \subset M$  and  
$A_1 \cap M = \varnothing$. In both cases we clearly have $z \in \partial M^{[d-1]} \cap B(b,r)$ and 
our assertion follows.
\end{proof}

\begin{remark}\label{velmir}
\cite[Example 7.12(i)]{RZ} shows that, in the above proposition, we cannot assert that
 $\partial M \setminus (\partial M)^{[d-1]}$ has zero $(d-1)$-dimensional Hausdorff measure
 even in the case when $M\subset \R^2$ is a set of positive reach.
\end{remark}

\subsection{Lipschitz manifolds and sets with Lipschitz boundaries}

In Section \ref{plane} we will completely characterize locally WDC sets in $\R^2$. For sets in $\R^d, \ d\geq 3,$ we do not know such a characterization (even for sets with locally positive reach). 

However, we will give such full characterization for sets of ``special types'', namely for
 $k$-dimensional Lipschitz manifolds (cf.\ Definition~\ref{plochy}~(b)) and for ``closed Lipschitz domains''.

The easier implication of the following result
 was observed in \cite[Proposition~3.2~(ii)]{PR13}; we present a full proof for completeness.

\begin{proposition}\label{lipman}
Let a closed set $A \subset \R^d$ be a  Lipschitz manifold of dimension $0<k<d$. Then $A$ is a locally WDC
 set if and only if $A$ is a DC manifold of dimension $k$.
\end{proposition}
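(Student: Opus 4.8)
The plan is to prove the two implications separately. The implication that a $k$-dimensional DC manifold is locally WDC is the easy one; I would construct an explicit local DC aura. Fix $a\in A$ and a chart presenting $A$ near $a$ as the graph of a Lipschitz DC map $\varphi\colon U\to W^{\perp}$ over a ball $U$ in a $k$-dimensional space $W$; extend $\varphi$ to a Lipschitz DC map on all of $W$ by Lemma~\ref{vldc}~(viii). Put $V\coloneqq W^{\perp}$, let $G(x)\coloneqq \pi_V(x)-\varphi(\pi_W(x))\in V$ and set $f\coloneqq |G|$. Then $G$ is Lipschitz DC and $f$ is a nonnegative Lipschitz DC function (the norm is convex, use Lemma~\ref{vldc}~(i),(iii),(vi)) whose zero set is exactly the graph, which for small $r$ coincides with $A$ on $B(a,r)$. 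To check that $0$ is weakly regular I would exploit that $G$ is affine in the $V$-directions, $G(x+v)=G(x)+v$ for $v\in V$; hence wherever $f$ is differentiable with $G(x)\neq0$ one has $\pi_V\nabla f(x)=G(x)/|G(x)|$, a unit vector. By continuity of $G$, every limit gradient at a point $x_i$ with $f(x_i)>0$ has the same $V$-component $G(x_i)/|G(x_i)|$, so every $u_i\in\partial f(x_i)$ satisfies $|u_i|\ge|\pi_V(u_i)|=1$. Thus $f$ is a DC aura in $B(a,r)$ and Proposition~\ref{P_loc_aura} gives that $A$ is locally WDC.

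For the converse I would argue locally and reduce the whole statement to showing that the Lipschitz chart map is DC. Fix $a\in A$; the Lipschitz manifold structure gives a fixed $k$-dimensional $W$, $V\coloneqq W^{\perp}$, and an $L$-Lipschitz $\psi\colon U\to V$ with $A$ near $a$ equal to the graph of $\psi$. By Proposition~\ref{P_local} choose a compact WDC set $A^{*}$ agreeing with $A$ near $a$, and let $f$ be a proper $1$-Lipschitz DC aura for $A^{*}$ with $\ep$, $H$ as in Lemma~\ref{DL}. The key is to verify a uniform transverse growth condition: there is $\ep'>0$ with
$$f'_+(x,v)+f'_+(x,-v)\ge\ep'\quad\text{for every }x\in A\cap B(a,r')\text{ and every unit }v\in V.$$
Granting this, $A\cap B(a,r')\subset Z_{\ep''}^{V}$ for some $0<\ep''<\ep'$ in the notation of Proposition~\ref{dczlomy} (applied with the subspace $V$, of dimension $d-k$), so $A\cap B(a,r')$ is covered by finitely many $k$-dimensional DC surfaces $P_1,\dots,P_s$ associated with $V$, i.e.\ graphs of Lipschitz DC maps $\varphi_j\colon W\to V$. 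Since every $w$ satisfies $(w,\psi(w))\in P_j$ for some $j$, we get $\psi(w)\in\{\varphi_1(w),\dots,\varphi_s(w)\}$; as $\psi$ is continuous, the Mixing Lemma~\ref{vldc}~(ix) shows $\psi$ is DC on a convex neighbourhood, whence $A$ is a DC manifold at $a$.

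The transverse growth condition is the main obstacle, and I would establish it from two ingredients. First, Lemma~\ref{DL}~(v) gives $f(z)\ge\tfrac{\ep}{2}\dist(z,A^{*})$ for $z$ near $a$, and $\dist(z,A^{*})=\dist(z,A)$ there since $A^{*}$ agrees with $A$ near $a$. Second, since $A$ is the graph of the $L$-Lipschitz map $\psi$, a unit vector $v\in V$ is uniformly transverse to $A$: an elementary estimate yields $\dist(x+tv,A)\ge c\,|t|$ for $x\in A$, a unit $v\in V$, and small $|t|$, with $c=c(L)>0$ independent of $x$ and $v$. Combining, $f(x+tv)\ge\tfrac{\ep c}{2}|t|$, and since $f(x)=0$ this gives $f'_+(x,\pm v)\ge\tfrac{\ep c}{2}$, hence the claimed bound with $\ep'=\ep c$. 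The uniformity in $x$ and $v$ (all handled by the single chart, the single constant $L$ and the single $\ep$) is exactly what is needed to place all of $A\cap B(a,r')$ inside one set $Z_{\ep''}^{V}$ and invoke Proposition~\ref{dczlomy}.
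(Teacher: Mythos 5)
Your proposal is correct and follows essentially the same route as the paper: for the easy direction you build the same local aura $f(x)=|\pi_{W^{\perp}}(x)-\varphi(\pi_W(x))|$ and verify weak regularity via the unit $W^{\perp}$-component of the subgradients, and for the converse you use Proposition~\ref{P_local}, the lower bound $f(x+tv)\gtrsim \ep t/L$ from Lemma~\ref{DL}~(v) and the Lipschitz-graph geometry, then Proposition~\ref{dczlomy} applied with the transverse subspace $W^{\perp}$ and the Mixing Lemma~\ref{vldc}~(ix). No substantive differences from the paper's argument.
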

\begin{proof}
First suppose that $A$ is a DC manifold of dimension $k$ and $a\in A$. 
 Using Lemma \ref{vldc}~(viii), it is easy to see that 
 there exist $W \in G(d,k)$, a DC mapping $\varphi:W\to W^\perp$ and $r>0$ such that
$$A\cap B(a,r)=\{ y+\varphi(y):\, y\in W\} \cap B(a,r).$$
Consider the function
$$f: x\mapsto | x-\phi(\pi_W(x))|,\quad x\in \R^d,$$
where $\phi(y)\coloneqq y+\varphi(y)$, $y\in W$. By Lemma \ref{vldc}~(i), (iii), (iv), (vi),  $f$ is a DC function and for each $x$ with $f(x)>0$ we have
$$f'(x,\nu(x))=1\quad\text{at}\quad \nu(x)\coloneqq \frac{x-\phi(\pi_W(x))}{| x-\phi(\pi_W(x))|}.$$
It follows that $f'(y)\cdot\nu(y)\geq 1$ whenever $f'(y)$ exists and, since $\nu$ is continuous on $\{f>0\}$, also $v\cdot\nu(x)\geq 1$ whenever $f(x)>0$ and $v\in\partial f(x)$. Thus, $0$ is a weakly regular value of $f$ and $f$ is a DC aura for $\{f=0\}$. Since $A\cap B(a,r)=\{f=0\}\cap B(0,r)$ by construction, we conclude that  $A$ is a locally WDC set.

To prove the opposite implication, 
suppose that $A$ is a locally WDC set.
Consider an arbitrary $a\in A$.
Using Definition \ref{lwdc} and Proposition \ref{P_local}, we can choose $\delta>0$ and a compact
 WDC set $M\subset A$ such that $M \cap B(a,\delta) = A \cap B(a,\delta)$. Let $f=f_M$ and
 $\ep= \ep_M$ be as in Notation \ref{fixaury}. Diminishing $\delta$, if necessary, we can suppose
that
 there exist $W \in G(d,k)$,
 a Lipschitz mapping  $\vf: W \to W^{\perp}$, $\delta>0$ and an open set $U$ in $W$ such
 that
$$  B(a,\delta) \cap A = B(a,\delta) \cap M = \{w + \vf(w):\ w \in U\}.$$
Fix $L>1$ such that $\vf$ is $L$-Lipschitz.
 Now consider  arbitrary  $v\in
 W^{\perp} \cap S^{d-1}$, $x \in B(a,\delta) \cap M$, $t>0$
 and $w\in U$. We will observe that, denoting  $w_0 = \pi_W(x)$, we have
\begin{equation}\label{odvz}
 d\coloneqq  |(x+tv)-(w+ \vf(w))| = |(w_0 + \vf(w_0)+tv)-(w+ \vf(w))| \geq   \frac{t}{2L}. 
\end{equation}
Indeed, the inequality is obvious, if $|w-w_0| \geq   \frac{t}{2L}$. And in the opposite case
 we have
$$d \geq  |\vf(w_0)+tv - \vf(w)| \geq t- |\vf(w_0) - \vf(w)| \geq t - L|w-w_0| \geq \frac{t}{2}
\geq   \frac{t}{2L}.$$ 
Consequently there exists
 $t_0>0$ such that $\dist(x+tv, M) \geq   \frac{t}{2L}$ for each
 $t \in (0, t_0)$.
Using also Lemma \ref{DL}~(v), we obtain that $f(x+tv) \geq \frac{\ep}{2}  \frac{t}{2L}$
 for $0 < t < \min(t_0,\ep)$.
Since $f(x)=0$, we have proved that
  for each  $x \in B(a,\delta) \cap M$ and  $ v\in W^{\perp}\cap S^{d-1}$ we have $f'_+(x,v) \geq  \frac{\ep}{4L}$. So Proposition \ref{dczlomy}~(i) (applied with $\Omega\coloneqq  B(a, \delta)$ and 
	 $K\coloneqq   W^{\perp}$; note that if $f=g-h$, where $g$, $h$ are convex on $\R^d$, then $g$, $h$ are
	 necessarily Lipschitz on each bounded set)
	implies that
 $B(a,\delta) \cap M$ can be covered by finitely many DC surfaces associated with $W^{\perp}$.
 Using Lemma \ref{vldc}~(ix), we easily obtain that $\vf$ is locally DC on $U$.
Therefore,  $A$ is a $k$-dimensional DC manifold.
\end{proof}

\begin{remark}\label{topvar} We do not know whether each $k$-dimensional topological manifold which is
 locally WDC is a DC manifold. However, it is a DC manifold except a nowhere dense set, see Corollary
 \ref{topman}.
\end{remark}

\begin{definition}\label{dom}
We will say that a closed set $A \subset \R^d$ is a {\it closed Lipschitz domain} (resp. a
 {\it closed DC domain}) if for each $a \in \partial A$ there exist $r>0$,
 $W \in G(d,d-1)$,
 $v \in W^{\perp} \cap S^{d-1}$ and a Lipschitz (resp. DC) function $\vf: W \to \R$
 such that 
\begin{equation}\label{domhr}
A \cap B(a,r) = \{w+tv:\ w \in W, t \leq \vf(w)\} \cap B(a,r).
\end{equation}
\end{definition}
\begin{remark}\label{dcdom}
Closed DC domains are called simply ``DC domains'' in \cite{PR13} and are considered as most natural 
 examples of closed locally WDC sets (see \cite[Proposition 3.3~(i)]{PR13} and the title of \cite{PR13}).
\end{remark}
\begin{theorem}\label{wdcdom}
Let $A \subset \R^d$ be a closed Lipschitz domain. Then $A$ is locally WDC if and only if 
 $A$ is a closed DC domain.
\end{theorem}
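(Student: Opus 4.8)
The plan is to prove both implications of Theorem~\ref{wdcdom}, with the forward direction (locally WDC $\implies$ closed DC domain) being the substantive part, since the converse is essentially the observation recorded in \cite[Proposition~3.3~(i)]{PR13} together with the construction already used in Proposition~\ref{lipman}.

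For the easy direction, suppose $A$ is a closed DC domain. Fix $a\in\partial A$ and choose $W\in G(d,d-1)$, $v\in W^\perp\cap S^{d-1}$, $r>0$ and a DC function $\vf:W\to\R$ so that \eqref{domhr} holds. I would then produce a local DC aura explicitly. Writing each $x$ as $x=\pi_W(x)+s\,v$ with $s=x\cdot v$, the natural candidate is the signed-distance-type function $g(x)\coloneqq x\cdot v-\vf(\pi_W(x))$, which is DC by Lemma~\ref{vldc}~(i), (iii), (iv), (vi) and vanishes exactly on the graph of $\vf$; then $f\coloneqq \max(g,0)$ is a nonnegative DC function by Lemma~\ref{vldc}~(i). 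On $\{f>0\}$ one has $f'_+(x,v)=1$, so as in Proposition~\ref{lipman} every subgradient $u\in\partial f(x)$ with $f(x)>0$ satisfies $u\cdot v\geq 1$, whence $0$ is weakly regular and $f$ is a DC aura in $B(a,r)$ for $A$. By Proposition~\ref{P_loc_aura} this makes $A$ locally WDC.

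For the hard direction, assume $A$ is locally WDC and a closed Lipschitz domain, and fix $a\in\partial A$. By Definition~\ref{lwdc} and Proposition~\ref{P_local} I would pass to a compact WDC set $M\subset A$ agreeing with $A$ near $a$, and fix $f=f_M$, $\ep=\ep_M$, $H=H_M$ as in Notation~\ref{fixaury}. Shrinking the neighbourhood, I get $W$, $v$ and an $L$-Lipschitz $\vf:W\to\R$ representing $A\cap B(a,r)$ as in \eqref{domhr}. The goal is to upgrade $\vf$ from Lipschitz to DC. The key geometric input is the same one-sided cone estimate as in Proposition~\ref{lipman}: I would show that moving from a boundary point into the complement in the direction $+v$ (the ``outward'' normal direction) separates from $A$ at a linear rate, giving $\dist(x+tv,M)\geq c\,t$ for small $t$, hence via Lemma~\ref{DL}~(v) a lower bound $f'_+(x,v)\geq c'$ uniformly on $\partial M$ near $a$. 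The essential difference from the manifold case is that here the graph direction is one-dimensional (codimension one), so I expect to apply Lemma~\ref{dcsubd1} (or Corollary~\ref{dcsubd2}, or Proposition~\ref{dczlomy}~(i) with $k=d-1$, $K=\spa\{v\}$) to conclude that $\partial M\cap B(a,\delta)$ is covered by finitely many DC hypersurfaces associated with $\spa\{v\}^\perp=W$, i.e.\ graphs over $W$. Finally I would invoke Lemma~\ref{vldc}~(ix) (the mixing lemma) to patch these finitely many DC graphs into a single DC representation: since $\partial A$ is locally the graph of the single Lipschitz function $\vf$ and also equals the union of finitely many DC graphs over $W$, the continuous selection $\vf$ lies pointwise among finitely many DC functions and is therefore DC on a neighbourhood of $\pi_W(a)$.

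The main obstacle I anticipate is the patching step: the finitely many DC hypersurfaces produced by the covering results are graphs over $W$, but a priori only cover $\partial M$ rather than coincide with it, and one must verify that near $a$ the Lipschitz graph of $\vf$ is genuinely the union of their graphs (not merely contained in it) so that Lemma~\ref{vldc}~(ix) applies with $F=\vf$ and $F_1,\dots,F_m$ the covering DC functions. This requires using the Lipschitz-domain hypothesis \eqref{domhr} to guarantee that $\partial A$ is locally exactly a graph over $W$, ruling out vertical overhangs, so that each covering surface restricts to a DC function on $\pi_W$ of the relevant neighbourhood and the hypotheses of the mixing lemma are met. Once this is arranged, Definition~\ref{dom} for a closed DC domain follows directly, completing the proof.
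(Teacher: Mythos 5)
Your proposal is correct and follows essentially the same route as the paper: reduce to a compact WDC set $M$ via Proposition~\ref{P_local}, use the one-sided cone estimate and Lemma~\ref{DL}~(v) to get a uniform lower bound on $f'_+(x,v)$ along $\partial M$ near $a$, apply Proposition~\ref{dczlomy}~(i) with the one-dimensional subspace $K=\spa\{v\}$ to cover $\partial M$ locally by finitely many DC graphs over $W$, and conclude by the mixing lemma (the easy direction is handled in the paper by citing \cite[Proposition~3.3~(i)]{PR13}, but your explicit aura $\max(x\cdot v-\vf(\pi_W(x)),0)$ is fine). One caution: of the three covering tools you list, only Proposition~\ref{dczlomy}~(i) produces surfaces \emph{associated with} $\spa\{v\}$, i.e.\ graphs over $W$, which is what the mixing lemma needs; Lemma~\ref{dcsubd1} and Corollary~\ref{dcsubd2} yield hypersurfaces with no control on their direction and would not suffice by themselves.
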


\begin{proof} 
Each closed DC domain is locally WDC by \cite[Proposition 3.3~(i)]{PR13}.
The following proof of the converse implication is quite analogous to the corresponding part of the proof of Proposition \ref{lipman}.

Suppose that $A\neq\varnothing$ is locally WDC and consider a point $a\in \partial A$. Choose $r$, $W$, $v$ and a Lipschitz $\vf: W \to \R$
 as in Definition \ref{dom}; so  \eqref{domhr} holds.
Using Definition \ref{lwdc} and Proposition \ref{P_local}, we can choose $0<\delta<r$ and a compact
 WDC set $M\subset A$ such that $M \cap B(a,\delta) = A \cap B(a,\delta)$. Let $f=f_M$ and
 $\ep= \ep_M$ be as in Notation \ref{fixaury}.
 It is easy to see that 
\begin{equation}\label{stopy}
\partial M \cap B(a,\delta) = 
\{w+\vf(w)v:\ w \in W\} \cap B(a,\delta)= \{w+\vf(w)v:\ w \in U\}
\end{equation}
for an open subset $U$ of $W$. Fix $L>1$ such
 that $\vf$ is $L$-Lipschitz.  Now consider  arbitrary $x \in \partial A \cap B(a,\delta)$,
 $t>0$ and $w\in U$. The same argument which was used to prove \eqref{odvz} reveals that
 $$ |(x+tv)-(w+ \vf(w) v)| \geq \frac{t}{2L}.$$ 
This inequality together with \eqref{domhr} and \eqref{stopy} easily implies  that
  there exists
 $t_0>0$ such that $\dist(x+tv, M) \geq   \frac{t}{2L}$ for each
 $t \in (0, t_0)$.
Using also Lemma \ref{DL}~(v), we obtain that
 $f(x+tv) \geq \frac{\ep}{2}  \frac{t}{2L}$
 if $0 < t < \min(t_0,\ep)$.
Since $f(x)=0$, we have proved that
  for each  $x \in B(a,\delta) \cap \partial M$    we have $f'_+(x,v) \geq  \frac{\ep}{4L}$
	 (and  $f'_+(x,-v) \geq 0$, since $f$ is nonnegative).
	So Proposition \ref{dczlomy}~(i) 
implies (cf. the corresponding argument in the  proof of Theorem \ref{lipman}) that
 $B(a,\delta) \cap M$ can be covered by finitely many DC surfaces associated with $W^{\perp}$.
 Using Lemma \ref{vldc}~(ix), we easily 
obtain that $\vf$ is locally DC on $U$.
Therefore, using  Lemma \ref{vldc}~(viii), we easily obtain that  $A$ is a closed DC domain.
\end{proof} 
\begin{remark}
Note that in Theorem~\ref{wdcdom} it is not enough to assume that $A$ is merely a closed ``topological domain''; consider the set $A=\{(x,y)\in\R^2:\, |y|\leq x^2, x\geq 0\}$ which is even a set with positive reach, but not a closed Lipschitz domain.
\end{remark}

\subsection{A technical lemma}

 Recall that by $G(d,k)$ we denote the set of all $k$-dimensional linear subspaces of $\R^d$.  
 Now we will define two (well-known) notions which substitute the notions of the angle between a vector and a subspace of $\R^d$ and the angle between two elements of  $G(d,k)$, but are more suitable for 
our purposes.

\begin{definition}\label{uhly}
Let $0 \neq v \in \R^d$,  $0<k<d$   and $V, W  \in G(d,k)$. Then we define
$$ \sigma(v, W) \coloneqq  \frac{\dist(v,W)}{|v|},$$
$$ \gamma(V,W)\coloneqq  \max \left( \sup \{\dist(v, W):\ v \in V \cap S^{d-1}\}, \sup \{\dist(w, V):\ w \in W \cap S^{d-1}\}\right).$$
\end{definition}

\begin{lemma}\label{siga}
Let $0 \neq z \in \R^d$,  $0<k<d$   and $V, W  \in G(d,k)$.        
Then  $\sigma(z,W) \leq \sigma(z,V) + \gamma(V,W)$. 
\end{lemma}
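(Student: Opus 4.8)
The plan is to exploit the invariance of $\sigma$ under nonzero rescaling of its first argument so as to reduce to the normalized case $|z|=1$, and then to combine the triangle inequality with the positive homogeneity of the distance to a linear subspace.

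First I would record that for every $\lambda\neq 0$,
$$\sigma(\lambda z,W)=\frac{\dist(\lambda z,W)}{|\lambda z|}=\frac{|\lambda|\,\dist(z,W)}{|\lambda|\,|z|}=\sigma(z,W),$$
where I use that $\dist(\cdot,W)=|(\mathrm{id}-\pi_W)(\cdot)|$ is positively homogeneous because $W$ is a linear subspace. Hence it suffices to prove the asserted inequality when $|z|=1$, in which case $\sigma(z,W)=\dist(z,W)$ and $\sigma(z,V)=\dist(z,V)$.

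Next I would put $v\coloneqq\pi_V(z)$, so that $\dist(z,V)=|z-v|$ and, since the orthogonal projection does not increase the norm, $|v|\leq|z|=1$. As $v\in V$ and $\pi_W(v)\in W$, the triangle inequality gives
$$\dist(z,W)\leq|z-\pi_W(v)|\leq|z-v|+|v-\pi_W(v)|=\dist(z,V)+\dist(v,W).$$
It then remains to estimate $\dist(v,W)$ by $\gamma(V,W)$. If $v=0$ this is trivial; otherwise $v/|v|\in V\cap S^{d-1}$, so by the definition of $\gamma$ together with the homogeneity noted above,
$$\dist(v,W)=|v|\,\dist\!\left(\tfrac{v}{|v|},W\right)\leq|v|\,\gamma(V,W)\leq\gamma(V,W),$$
using $|v|\leq 1$ in the last step. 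Substituting this bound into the preceding display yields $\dist(z,W)\leq\dist(z,V)+\gamma(V,W)$, which is exactly $\sigma(z,W)\leq\sigma(z,V)+\gamma(V,W)$ for $|z|=1$, completing the argument.

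I do not expect any genuine obstacle here; the one point deserving a moment's care is the positive homogeneity of $\dist(\cdot,W)$, which legitimizes both the reduction to $|z|=1$ and the rescaling of $v/|v|$ in the final estimate, and which is precisely what allows the symmetric supremum defining $\gamma$ to control the one-sided quantity $\dist(v,W)$ for a general $v\in V$.
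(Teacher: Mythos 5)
Your proof is correct and follows essentially the same route as the paper's: normalize to $|z|=1$, set $v=\pi_V(z)$, apply the triangle inequality $\dist(z,W)\leq|z-v|+\dist(v,W)$, and bound $\dist(v,W)\leq\gamma(V,W)$ using $|v|\leq 1$. Your explicit justification of the homogeneity step is a welcome (if routine) elaboration of what the paper leaves implicit.
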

\begin{proof}
We can and will suppose that $|z|=1$.
Note that for $v\coloneqq  \pi_V(z)$ we have
 $|z-v| = \sigma(z,V)$.
 Since $|v|\leq 1$, we have $\dist(v,W) \leq \gamma(V,W)$. Consequently
$$\sigma(z,W) = \dist(z,W) \leq |z-v| + \dist(v,W) \leq \sigma(z,V) + \gamma(V,W).$$
\end{proof}

\begin{lemma}\label{VWZ}
Let  $0<k<d$ and  $V, W  \in G(d,k)$. Let $\gamma\coloneqq  \gamma(V,W) >0$.
Then there exists a subspace $Z$ of $\R^d$ of dimension $d-k+1$ such that
\begin{equation}\label{bune}
\text{for each $0 \neq z \in Z$, either $\sigma(z,V)\geq \gamma/3$ or  $\sigma(z,W)\geq \gamma/3$.}
\end{equation}
\end{lemma}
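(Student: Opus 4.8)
The plan is to construct $Z$ explicitly from a single extremal direction. First I would record the normalization $\gamma\le 1$, since $\dist(x,U)\le|x|$ for every unit vector $x$ and every subspace $U$. Because $V\cap S^{d-1}$ and $W\cap S^{d-1}$ are compact and the maps $v\mapsto\dist(v,W)$, $w\mapsto\dist(w,V)$ are continuous, both suprema in the definition of $\gamma$ are attained; hence $\gamma$ is realized by a unit vector lying in $V$ or in $W$. Since the desired conclusion \eqref{bune} and the quantity $\gamma(V,W)$ are symmetric in $V$ and $W$, I may assume after interchanging them that there is $v_0\in V\cap S^{d-1}$ with $\dist(v_0,W)=\gamma$. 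I then set $Z\coloneqq V^{\perp}+\spa\{v_0\}$. As $v_0\in V\setminus\{0\}$ lies outside $V^{\perp}$, the sum is direct and $\dim Z=(d-k)+1=d-k+1$, as required.

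To verify \eqref{bune}, fix $0\neq z\in Z$; by the scaling invariance of $\sigma(\cdot,V)$ and $\sigma(\cdot,W)$ I may assume $|z|=1$. I would write the orthogonal decomposition $z=u+tv_0$ with $u\in V^{\perp}$, so that $u\perp v_0$ and $|u|^2+t^2=1$. Then $\pi_V(z)=tv_0$, which gives $\sigma(z,V)=\dist(z,V)=|u|$. For the distance to $W$, using that $\dist(\cdot,W)$ is $1$-Lipschitz and that $\dist(tv_0,W)=|t|\,\dist(v_0,W)=|t|\gamma$, I obtain
\[
\sigma(z,W)=\dist(z,W)\ \ge\ |t|\gamma-|u|.
\]

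It then remains to check the one-variable inequality $\max\!\bigl(|u|,\,|t|\gamma-|u|\bigr)\ge\gamma/3$ subject to $|u|^2+t^2=1$. Writing $a=|u|$: if $a\ge\gamma/3$ the first term suffices, so $\sigma(z,V)\ge\gamma/3$. Otherwise $a<\gamma/3\le 1/3$ (here $\gamma\le1$ is used), whence $|t|=\sqrt{1-a^2}>\tfrac{2\sqrt2}{3}$ and
\[
|t|\gamma-a\ >\ \tfrac{2\sqrt2}{3}\gamma-\tfrac{\gamma}{3}\ =\ \tfrac{2\sqrt2-1}{3}\gamma\ >\ \tfrac{\gamma}{3},
\]
so $\sigma(z,W)\ge\gamma/3$. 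This establishes \eqref{bune}.

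I do not anticipate a genuine obstacle: the whole content is the choice of $Z$. The guiding insight is that one direction $v_0$ that is far from the other subspace already suffices, and enlarging it by all of $V^{\perp}$ keeps every vector of $Z$ far from $V$ unless that vector is nearly parallel to $v_0$ --- in which case it is automatically far from $W$. The accompanying estimate is elementary and, as the computation shows, the constant $\gamma/3$ is met with room to spare.
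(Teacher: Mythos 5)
Your proof is correct and follows essentially the same route as the paper: the same choice $Z=\spa(\{v_0\}\cup V^{\perp})$ for an extremal unit vector $v_0\in V$, the same case split on whether the $V^{\perp}$-component has norm at least $\gamma/3$, and the same Lipschitz estimate $\dist(z,W)\ge|t|\gamma-|u|$. The only (immaterial) difference is that you bound $|t|$ via the Pythagorean identity $|u|^2+t^2=1$, whereas the paper uses the cruder $|\lambda|+|u|\ge 1$; both yield the constant $\gamma/3$.
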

\begin{proof}
 Without a loss of generality we can suppose that there exists
 $v \in V \cap S^{d-1}$ with $\dist(v,W)= \gamma$. Set $Z\coloneqq  \spa (\{v\} \cup  V^{\perp})$. 
 Clearly $\dim Z = d-k+1$.
To prove \eqref{bune}, consider an arbitrary $z \in Z  \cap S^{d-1}$.
 Write $z= \lambda v + u$, where $u \in  V^{\perp}$. Now distinguish two possibilities.
If $|u| \geq \gamma/3$, then $\dist(z,V)= \dist(u,V) = |u| \geq \gamma/3$.
If $|u| < \gamma/3$, then $|\lambda| \geq 2/3$, since $\gamma \leq 1$ and 
$|\lambda| + |u| \geq |z|=1$. Consequently  $\dist (\lambda v, W) \geq (2/3) \gamma$, and so
 $\dist (z, W) \geq \dist (\lambda v, W) - |u| \geq  (2/3) \gamma - \gamma/3 = \gamma/3$.
\end{proof}

\begin{lemma}\label{silip}
Let $0<k<d$ and  $W \in G(d,k)$. Let $\varnothing \neq S \subset \R^d$, $1/2 > \ep>0$ and
\begin{equation}\label{mauh}
\sigma(s_2-s_1, W) \leq  \ep\ \ \ \text{whenever}\ \ \ s_1,s_2 \in S, s_1 \neq s_2.
\end{equation}
Then there exists a $(2\ep)$-Lipschitz mapping $g: \pi_W(S) \to W^{\perp}$ such that
 $S= \{w+ g(w):\ w \in \pi_W(S)\}$. 
 \end{lemma}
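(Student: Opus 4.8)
The plan is to define $g$ explicitly and then verify both its well-definedness and the Lipschitz estimate directly from the hypothesis \eqref{mauh}, using the geometric meaning of $\sigma$ (namely that $\sigma(v,W)=|\pi_{W^\perp}(v)|/|v|$, since $W$ is a subspace).

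First I would observe that the orthogonal projection $\pi_W$ is injective on $S$. Indeed, if $s_1,s_2\in S$ satisfy $\pi_W(s_1)=\pi_W(s_2)$ but $s_1\neq s_2$, then $s_2-s_1\in W^{\perp}$, so $\dist(s_2-s_1,W)=|s_2-s_1|$ and hence $\sigma(s_2-s_1,W)=1$; since $\ep<1/2<1$, this contradicts \eqref{mauh}. Consequently, for each $w\in\pi_W(S)$ there is a unique $s=s(w)\in S$ with $\pi_W(s)=w$, and we may set $g(w)\coloneqq \pi_{W^{\perp}}(s(w))=s(w)-w\in W^{\perp}$. By construction $s(w)=w+g(w)$, so $S=\{w+g(w):\ w\in\pi_W(S)\}$, exactly as required.

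It then remains to estimate the Lipschitz constant. Given $w_1,w_2\in\pi_W(S)$, write $s_i=w_i+g(w_i)$ and set $a\coloneqq |g(w_2)-g(w_1)|$, $b\coloneqq |w_2-w_1|$. Since $w_2-w_1\in W$ and $g(w_2)-g(w_1)\in W^{\perp}$ are precisely the orthogonal components of $s_2-s_1$, we have $\dist(s_2-s_1,W)=a$ and $|s_2-s_1|=\sqrt{a^2+b^2}$. If $s_1=s_2$ there is nothing to prove; otherwise \eqref{mauh} gives $a\leq\ep\sqrt{a^2+b^2}$, and squaring yields $a^2(1-\ep^2)\leq\ep^2 b^2$, i.e.\ $a\leq\frac{\ep}{\sqrt{1-\ep^2}}\,b$. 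Because $\ep<1/2$ we have $1-\ep^2>3/4$, so $\frac{\ep}{\sqrt{1-\ep^2}}<2\ep$; hence $a\leq 2\ep\, b$, which is the desired $(2\ep)$-Lipschitz bound for $g$.

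I do not expect any serious obstacle here. The only point requiring genuine care is that the hypothesis on $\sigma$ is used in two different regimes: once on a degenerate difference $s_2-s_1\in W^{\perp}$ to force injectivity of $\pi_W|_S$ (so that $g$ is well defined at all), and once on a general difference, where the orthogonal decomposition converts the angle bound into a slope bound. The final constant $2\ep$ is then just the elementary inequality $\frac{\ep}{\sqrt{1-\ep^2}}\leq 2\ep$ valid for $\ep<1/2$; alternatively one may use $\sqrt{a^2+b^2}\leq a+b$ to get $a\leq\frac{\ep}{1-\ep}\,b\leq 2\ep\,b$, which avoids the square root entirely.
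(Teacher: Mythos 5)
Your proof is correct and follows essentially the same route as the paper: decompose $s_2-s_1$ into its $W$ and $W^{\perp}$ components, read off $\sigma(s_2-s_1,W)=|z|/|s_2-s_1|$, and convert the bound $\ep$ into the slope bound $2\ep$ via elementary estimates (the paper uses $|w|\geq|s|-|z|\geq(1-\ep)|s|\geq|s|/2$, which is the same as your second, square-root-free variant). Your explicit separate treatment of the injectivity of $\pi_W|_S$ is a harmless elaboration of what the paper leaves implicit.
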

\begin{proof}
Let $s_1,s_2 \in S, s_1 \neq s_2$ and $s_i= w_i + z_i$, where $w_i \in W$ and $z_i \in W^{\perp}$,
 $i=1,2$. Set $s\coloneqq  s_2-s_1$, $w\coloneqq  w_2 - w_1$, $z\coloneqq z_2-z_1$. Our task is to prove  $|z| \leq  2 \ep |w|$.
   Clearly $s= w +z$ and by \eqref{mauh} $\sigma(s,W) = \frac{|z|}{|s|} \leq \ep$.
  Therefore
	 $|w| \geq |s|- |z| \geq (1-\ep) |s| \geq (1/2) |s|$ which implies  $|z|\leq \ep |s| \leq    2 \ep |w|$.
	\end{proof}

\begin{lemma}\label{transv}
	 Let $P\coloneqq  P_1\cup\dots \cup P_s$, where all $P_i \subset \R^d$
 are DC surfaces of dimension $k$, $1\leq k < d$. Let $M \subset \R^d$ be a compact set and $\omega >0$. Then there exists a set $E \subset \R^d$ which is a finite
 union of DC surfaces of dimension $k-1$ such that for each $x \in (P\cap M) \setminus E$ there exist $\eta(x)>0$ and  $W \in G(d,k)$ such that, denoting $U\coloneqq  B(x,\eta(x))$,  the following conditions hold:
  \begin{enumerate}
  \item[(i)]  
  If  $x \notin P_i$, then $P_i \cap  U = \varnothing$.
  \item[(ii)]
  If $x \in P_i$, then there exists an $\omega$-Lipschitz, locally DC mapping $\psi_i: D_i \to W^{\perp}$ such that $D_i \subset W$ is open in $W$ and
   $P_i  \cap U=    \{t+ \psi_i(t):\ t \in D_i\}$.
   \end{enumerate}
  \end{lemma}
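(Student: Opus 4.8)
The plan is to reduce condition (i) to a triviality and to build $E$ as a finite union of two families of $(k-1)$-dimensional DC surfaces: one coming from the self-singularities of the individual $P_i$, the other from the transversal intersections of pairs $P_i,P_j$. I fix small parameters $\delta>0$ and $\theta>0$, to be tuned at the very end in terms of $\omega$, and I write each surface as $P_i=\{w+\varphi_i(w):w\in W_i\}$ with $W_i\in G(d,k)$ and $\varphi_i\colon W_i\to W_i^{\perp}$ Lipschitz DC.

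Condition (i) costs nothing: since each DC surface is closed (Remark \ref{uzpl}(i)), $x\notin P_i$ already forces $P_i\cap B(x,\eta)=\varnothing$ for small $\eta$, so $E$ only has to serve (ii). For the first family I would cover, for each $i$, the set $S_i\subset P_i$ of points $x=w+\varphi_i(w)$ at which some component of $\varphi_i$ has a one-sided second difference exceeding $\delta$ in some direction of $W_i$. For $k\geq2$ this is exactly a set of the form $Z_{\delta}$ (with lines as the subspaces $K$) for each scalar component, so Proposition \ref{dczlomy}(ii), applied inside $W_i$ over a bounded ball containing $\pi_{W_i}(M)$ (legitimate since $M$ is compact), covers it in $W_i$ by finitely many $(k-1)$-dimensional DC surfaces; pushing these forward by the Lipschitz DC map $w\mapsto w+\varphi_i(w)$ and invoking Lemma \ref{sloz} gives finitely many $(k-1)$-dimensional DC surfaces in $\R^d$. (The case $k=1$ is handled directly: by Lemma \ref{strict} a one-variable DC map has only finitely many jumps of its derivative exceeding $\delta$ on a bounded interval, a $0$-dimensional set.) Put $E_1\coloneqq\bigcup_iS_i$. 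Off $S_i$ we have $\diam\partial\varphi_i\leq\delta$ at $\pi_{W_i}(x)$, hence $\leq2\delta$ nearby by the upper semicontinuity \eqref{uzcl}; fixing $\ell_i\in\partial\varphi_i(\pi_{W_i}(x))$ and using the mean value theorem for DC functions, every short secant $s$ of $P_i$ near $x$ satisfies $\sigma(s,W^{(i)})\leq2\delta$, where $W^{(i)}\coloneqq\{v+\ell_iv:v\in W_i\}\in G(d,k)$ is the approximate tangent plane.

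For the second family, for each ordered pair $i\neq j$ I would express the incidence $P_i\cap P_j$ over $W_i$ as the zero set of the DC map $G_{ij}(w)\coloneqq\pi_{W_j^{\perp}}\bigl(w+\varphi_i(w)\bigr)-\varphi_j\bigl(\pi_{W_j}(w+\varphi_i(w))\bigr)$, which is DC by Lemma \ref{vldc}(i),(iii),(vi). I then define $E_2^{ij}$ to be the set of those $x\in P_i\cap P_j$ for which some scalar component of $G_{ij}$ has a Clarke subgradient of norm exceeding $\ep$ at $\pi_{W_i}(x)$; since $G_{ij}$ vanishes on $P_i\cap P_j$, Lemma \ref{dcsubd1} (again over a bounded ball, using compactness of $M$) covers the corresponding subset of $W_i$ by finitely many $(k-1)$-dimensional DC hypersurfaces, and Lemma \ref{sloz} transports them to $\R^d$. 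Set $E\coloneqq E_1\cup\bigcup_{i\neq j}E_2^{ij}$, a finite union of $(k-1)$-dimensional DC surfaces. The hard part of the proof is the geometric content that links these two families: namely that at a good point $x\in P_i\cap P_j$ (one lying off $S_i\cup S_j$) a large angle $\gamma(W^{(i)},W^{(j)})>\theta$ between the approximate tangent planes forces $x\in E_2^{ij}$. This is where I expect to spend the real effort: at such a point $G_{ij}$ is close to its linearization, whose non-degeneracy is controlled from below by $\gamma(W^{(i)},W^{(j)})$, and converting this into a genuine lower bound $>\ep$ on the norm of a subgradient of some component of $G_{ij}$ is precisely what Lemmas \ref{VWZ} and \ref{siga} are designed to deliver.

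It then remains to verify that every $x\in(P\cap M)\setminus E$ admits the required data. Writing $I(x)\coloneqq\{i:x\in P_i\}$, each $W^{(i)}$ with $i\in I(x)$ is defined (as $x\notin E_1$), and by the previous paragraph all of them are mutually $\theta$-close (as $x\notin\bigcup E_2^{ij}$). Choosing any reference $i_0\in I(x)$ and setting $W\coloneqq W^{(i_0)}$, Lemma \ref{siga} yields $\sigma(s,W)\leq\sigma(s,W^{(i)})+\gamma(W^{(i)},W)\leq2\delta+\theta$ for every short secant $s$ of every $P_i$ with $i\in I(x)$. With $\delta,\theta$ chosen so that $2\delta+\theta$ is below $\tfrac12$ and below $\tfrac\omega2$, Lemma \ref{silip} represents each $P_i\cap U$ as the graph of an $\omega$-Lipschitz map $\psi_i$ over an open subset of $W$, and Lemma \ref{L:twoDCgraphs} (comparing this graph with the DC graph of $\varphi_i$ over $W_i$) upgrades $\psi_i$ to locally DC. Finally, shrinking $\eta(x)$ so that $U=B(x,\eta(x))$ both realises the secant estimates above and avoids every closed $P_i$ with $i\notin I(x)$ secures (i), completing the verification.
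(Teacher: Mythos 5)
Your overall architecture (an exceptional set built from the self-singularities of each $P_i$ plus the "transversal" pairwise intersections, followed by the secant estimates, Lemma~\ref{siga}, Lemma~\ref{silip} and Lemma~\ref{L:twoDCgraphs}) matches the paper's, but the construction of your first family $E_1$ contains a genuine error. You define $S_i$ as the set where some component $f$ of $\vf_i$ satisfies $f'_+(x,v)+f'_+(x,-v)>\delta$ for some direction $v$ (the set $Z_\delta$ of Proposition~\ref{dczlomy} with lines as $K$), and you then claim that off $S_i$ one has $\diam\partial\vf_i\leq\delta$. For a DC (non-convex) function this implication is false: the quantity $f'_+(x,v)+f'_+(x,-v)$ can be negative while the Clarke subdifferential is large. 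Concretely, for $\vf_i(x_1,x_2)=-|x_1|$ on $W_i=\R^2\subset\R^3$ one has $f'_+(x,v)+f'_+(x,-v)\leq 0$ at every point of the singular line $\{x_1=0\}$, so none of these points lies in your $S_i$, yet $\diam\partial f=2$ there and the surface has a genuine ridge: near such a point $P_i$ is not an $\omega$-Lipschitz graph over any $W\in G(3,2)$ for small $\omega$, and your $E_2$ (which only records pairwise intersections) does not catch these points either. So the conclusion of the lemma fails for your $E$. The fix is exactly what the paper does: write each component as $g^i_j-h^i_j$ and remove, via Proposition~\ref{dim1}, the set where $\diam\partial g^i_j$ or $\diam\partial h^i_j$ exceeds $\delta$; since $\partial\vf^i_j\subset\partial g^i_j-\partial h^i_j$, this genuinely controls $\diam\partial\vf^i_j$ pointwise and, by upper semicontinuity, on a neighbourhood, which is what the secant estimate needs.

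Two further remarks. First, your treatment of $E_2$ via the incidence map $G_{ij}$ and Lemma~\ref{dcsubd1} in the parameter space $W_i$ is a genuinely different (and arguably more direct) route than the paper's, which instead builds a global auxiliary convex function $f=\sum_i f_i$ from $\max(y_j-\vf_j(t),0)=p_j-q_j$ and applies Proposition~\ref{dczlomy} with $(d-k+1)$-dimensional subspaces supplied by Lemma~\ref{VWZ}. Your route can be made to work, but the step you defer (large $\gamma(W^{(i)},W^{(j)})$ forces a large subgradient of some component of $G_{ij}$) is the heart of the matter and is not delivered by Lemmas~\ref{VWZ} and~\ref{siga}, which serve the paper's construction, not yours; what you actually need is the elementary lower bound $|\pi_{W_j^\perp}(z)-\ell_j\pi_{W_j}(z)|\geq\dist(z,W^{(j)})$ together with the secant estimates for both $\vf_i$ and $\vf_j$ and a Lebourg-plus-closedness argument as in Lemma~\ref{Z}. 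Second, in the final verification you assert that Lemma~\ref{silip} represents $P_i\cap U$ as a graph over an \emph{open} subset of $W$; openness of $\pi_W(P_i\cap U)$ is not automatic and is needed both for condition (ii) and to apply Lemma~\ref{L:twoDCgraphs}. The paper obtains it from Brouwer's Invariance of Domain, and you should too.
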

	\begin{proof}
	Fix an open ball $\Omega$ containing $M$ and set $\alpha\coloneqq  \omega/(40 d)$.
	For each $1 \leq i \leq s$ choose  $W_i \in G(d,k)$ and a Lipschitz DC mapping
	 $\vf^i: W_i \to W_i^{\perp}$ such that $P_i = \{t + \vf_i(t): \ t \in W_i\}$.
	Set $\pi_i\coloneqq  \pi_{W_i}$  and  $\Omega_i\coloneqq  \pi_i(\Omega)$.
	\smallskip
	For each $i$, fix an orthonormal basis $(b_1^i,\dots,b_{d-k}^i)$ of   $W_i^{\perp}$.  Then
	 there exist Lipschitz  DC functions $\vf_1^i,\dots, \vf_{d-k}^i$ on $W_i$ such that
	\begin{equation}\label{devf}
	 \vf^i(t)= \vf_1^i(t) b_1^i + \dots +   \vf_{d-k}^i(t) b_{d-k}^i,\ \ \ t\in W_i.
	\end{equation}
	Write $\vf_j^i = g_j^i - h_j^i$, where $g_j^i$  and  $h_j^i$ are convex functions on $W_i$.
	Applying Proposition~\ref{dim1} to  
	 Lipschitz convex functions  $g_j^i|_{\Omega_i}$ and $h_j^i|_{\Omega_i}$
	we easily
	 obtain in $W_i$ a
	 set $N_i$ which is a finite union of DC surfaces of dimension $k-1$ such that, for each $1\leq j \leq d-k$, 
	$$  \diam   \partial g_j^i(t) < \frac{\alpha}{2d}\ \ \ \text{and}\ \ \   \diam   \partial h_j^i(t) < \frac{\alpha}{2d}\ \ \ \text{for each}\ \ \  t \in \Omega_i \setminus N_i.               $$
	Since  $\partial \vf_j^i (t) \subset \partial g_j^i(t) - \partial h_j^i(t)$ 
	(see  \cite[Proposition 2.3.3]{Cl90}), we obtain
	(for $i$, $j$ and $t$ as above) that $\diam \partial \vf_j^i (t) < \alpha/d$. For those $i$, $j$ and $t$, choose  $L_j^{t,i} \in \partial \vf_j^i (t)$. 
	 Using the upper semicontinuity of the Clarke subdifferential (see \eqref{uzcl}), we can find
	 for each $t \in \Omega_i \setminus N_i$ its neighbourhood $U_i^t$  in $W_i$ such that, for each
	 $j$,
	\begin{equation}\label{shpo}
	|L- L_j^{t,i}| < \frac{\alpha}{d}\ \ \ \text{whenever}\ \ \ L \in \partial \vf_j^i (\tau)\ \ \ \text{for some}\ \ \ \tau \in U_i^t.
	\end{equation}
	Define the linear mapping $L^{t,i}: W_i \to W_i^{\perp}$  by the formula
	\begin{equation}\label{delit}
	L^{t,i}(u)=L_1^{t,i} (u) b_1^i + \dots +  L_{d-k}^{t,i} (u)  b_{d-k}^i,\ \ \ u\in W_i.
	\end{equation}
	We will show that, for each $t \in  \Omega_i \setminus N_i$ and each $j$,
	\begin{equation}\label{eli}
	|\vf^i(t_2) - \vf^i(t_1) - L^{t,i}(t_2-t_1)| < \alpha |t_2-t_1|\ \ \text{whenever}\ \ t_1, t_2 \in
	U_i^t.
	\end{equation}
	To this end choose arbitrary $t_1, t_2 \in	U_i^t$. By Lebourg's mean value theorem
	 (\cite[Theorem 2.3.7]{Cl90}) we have, for each $j$, $\vf_j^i (t_2) -  \vf_j^i (t_1) = 
	 L(t_2-t_1)$, where $L \in \partial \vf_j^i (\tau)$ for some  $\tau \in U_i^t$. Therefore,
	 using also \eqref{shpo}, \eqref{delit} and  \eqref{devf}, we easily obtain \eqref{eli}.
	
	Set $E_1^i\coloneqq   \{t+ \vf^i(t):\ t \in N_i\}$ and $E_1\coloneqq   \bigcup_{i=1}^sE_1^i$. Then $E_1$ is  a finite
 union of DC surfaces of dimension $k-1$ by Lemma \ref{sloz}. Further, for $x \in (P_i \cap M)\setminus E_1$,
	set $V_i^x\coloneqq  \{u+ L^{\pi_i(x),i}(u):\ u \in W_i\}$. For each couple $1\leq i < j \leq s$ define
	$$B_{i,j} = \{x\in (P \cap M)\setminus E_1: 
	 \ x\in P_i \cap P_j\ \text{and}\ \ \gamma(V_i^x,V_j^x)> (3/8) \omega\},$$
	$$ E_2: = \bigcup_{1\leq i < j \leq s} B_{i,j}\ \ \  \text{and}\ \ \ E\coloneqq  E_1 \cup E_2.$$
	To finish the proof, it is sufficient to show that
	\begin{equation}\label{edva}
	\text{$E_2$ is contained in  a finite
 union of DC surfaces of dimension $k-1$}
\end{equation}
and that
\begin{equation}\label{etaw}
\text{for each $x \in (P\cap M) \setminus E$ there exist $\eta(x)$ and  $W$ 
 for which (i), (ii) hold.}
\end{equation}
 We will infer 	\eqref{edva} from \cite[Lemma~4.3]{RZ} (or Proposition~\ref{dczlomy}).
Indeed, we will construct a Lipschitz
 convex function $f$ on $\Omega$ and, for each couple $1\leq i < j \leq s$ and   $x\in B_{i,j}$, a $(d-k+1)$-dimensional space $V^x_{i,j} \subset \R^d$ such
 that 
\begin{equation}\label{cil}
\text{$f'_+(x,v) + f'_+(x,-v) > \alpha$ whenever  $v \in V^x_{i,j}$ with $|v|=1$.}
\end{equation}

First we will construct, for each $1\leq i \leq s$, a convex function $f_i$ on $\R^d$.
 To this end, fix $i$. For a while, we will write for the short
 $\vf, b_j, \vf_j, g_j, h_j$ instead of  $\vf^i, 
b_j^i, \vf_j^i, g_j^i, h_j^i$.
 For each $x \in \R^d$, we will define the ``new coordinates''  $t(x)\in W_i$ and $y_j(x) \in \R$
 ($j= 1,\dots,d-k$) of the point $x$ by the equality  $x= t(x)+y_1(x)b_1+\dots + y_{d-k}(x)b_{d-k}$.
 Now for $1 \leq j \leq d-k$
we define a function $s_j$ on $\R^d$ by the formula 
$$  s_j(x) = \max (y_j(x) - \vf_j(t(x)), 0),\ \ \ x \in \R^d.$$
Since each $s_j$ is by Lemma \ref{vldc}~(i), (iii), (iv), (vi) a DC  function, we can write   $s_j = p_j-q_j$, where $p_j$ and 
 $q_j$ are convex functions on $\R^d$. Set $f_i\coloneqq  \sum_{j=1}^{d-k} (p_j + q_j)$. 
 We will show that, for each $x\in (P_i \cap M) \setminus E_1$,
\begin{equation}\label{odchi}
(f_i)'(x,z) + (f_i)'(x,-z) >\alpha\ \ \ \text{whenever}\ \ \ |z|=1\ \ \ \text{and}\ \ \ \sigma(z,V^x_i)> \omega/8.
\end{equation}
To this end, let $x\in (P_i \cap M) \setminus E_1$,  $|z|=1$ and $\sigma(z,V^x_i)> \omega/8$.
Set $t\coloneqq  t(x)$ and $w\coloneqq  t(z)$. Set $L\coloneqq  L^{t,i}$; let $L(u)= L_1(u)b_1+\dots+L_{d-k}(u)b_{d-k}$, $u \in W_i$.
Since $\dist(z, V_i^x) > \omega/8$, we have 
$$|(w + y_1(z)b_1+\dots + y_{d-k}(z) b_{d-k})   - (w + L_1(w)b_1+\dots +L_{d-k}(w) b_{d-k}) | > \omega/8.$$
So we can find  $1 \leq j \leq d-k$ such that $|y_j(z) - L_j(w)| > \omega/(8d) = 5\alpha$.
First suppose that
\begin{equation}\label{yjv}
y_j(z) \geq L_j(w) + 5 \alpha. 
\end{equation}

By \eqref{eli} we easily obtain, for all sufficiently small $h>0$, the inequality
$$ |\vf_j(t+hw) - \vf_j(t) - L_j(hw)| < \alpha h |w|.$$
So, since $\vf_j(t)=y_j(x)$, using \eqref{yjv}, we obtain for these $h$
\begin{equation}\label{odh1}
 \vf_j(t+hw) < y_j(x) + h L_j(w) + \alpha h |w|\leq  y_j(x) + h (y_j(z) - 5 \alpha) + \alpha h =
  y_j(x) + hy_j(z)-4 \alpha h.
\end{equation}
Therefore $s_j(x+hz) = \max (y_j(x) +h y_j(z) - \vf_j(t+hw), 0) \geq 2 \alpha h$.
 Since $s_j(x)= \max(y_j(x) - \vf_j(t),0)=0$, we
 obtain $(s_j)'_+(x,z) \geq 2 \alpha$.
Since $s_j \geq 0$ and $s_j(x)=0$, we have
  $(s_j)'_+(x,-z) \geq 0$, and thus $(s_j)'_+(x,z)+ (s_j)'_+(x,-z) \geq 
	2 \alpha$. Therefore, it is easy to see that  
	\begin{equation}\label{fizl}
	(f_i)'_+(x,z) + (f_i)'_+(x,-z) >  \alpha.
	\end{equation}
	 The case $y_j(z) \leq  L_j(w) - 5 \alpha$ can be treated quite symmetrically. Indeed, in this
	 case \eqref{yjv} holds, if we write in it $-z$ and $-w$ instead of $z$ and $w$. Since
	 also all subsequent formulas till \eqref{fizl} hold after these substitutions, we obtain
	 that \eqref{fizl} holds also in the second case.

Now put $f\coloneqq  \sum_{i=1}^s f_i$.
Let $1\leq i < j \leq s$ and   $x\in B_{i,j}$ be fixed. By Lemma \ref{VWZ}, there exists a
 $(d-k+1)$-dimensional space $Z\eqqcolon V^x_{i,j} \subset \R^d$ such
 that 
\begin{equation}\label{bune2}
\text{for each $0 \neq z \in Z$, either $\sigma(z,V_i^x)> \omega/8$ or  $\sigma(z,V_j^x)> \omega/8 $.}
\end{equation}
By \eqref{odchi}, we obtain  \eqref{cil} in both possible cases.
 So, each $B_{i,j}$, and thus also $E_2$,  is contained in  a finite
 union of DC surfaces of dimension $k-1$
 by  \cite[Lemma~4.3]{RZ} (or Proposition~\ref{dczlomy}). Thus we have proved $\eqref{edva}$.

To prove  \eqref{etaw}, consider an arbitrary $x \in (P\cap M) \setminus E$.
 Choose a $1\leq j \leq s$ for which $x \in P_j$ and set $W\coloneqq  V_j^x$. 
Now consider an arbitrary $1\leq i \leq s$ for which $x \in P_i$. Set $t\coloneqq  \pi_i(x)$,
 $S_i\coloneqq  \{\tau+ \vf^i(\tau):\ \tau \in U_i^t\}$ and consider arbitrary different $s_1, s_2 \in S_i$.
 Then $s_1 = t_1 + \vf^i(t_1)$ and $s_2 = t_2 + \vf^i(t_2)$ for some $t_1, t_2 \in
	U_i^t$ and
	$ s_2-s_1 = (t_2 - t_1) + (\vf^i(t_2)- \vf^i(t_1))$. Set $u\coloneqq  (t_2-t_1) +  L^{t,i}(t_2-t_1)$.
	 Then  $u \in V_i^x$ and  \eqref{eli} implies
	$$ |(s_2-s_1) - u|= |\vf^i(t_2) - \vf^i(t_1) - L^{t,i}(t_2-t_1)| \leq  \alpha |t_2-t_1| \leq
	 \alpha |s_2-s_1|.$$
	Consequently $\sigma(s_2-s_1,V_i^x) \leq \alpha$. Since $x \notin E_2$ and $B_{i,j} \subset E_2$,
	we have  $\gamma(V_i^x, W) = \gamma(V_i^x,V_j^x ) \leq 3 \omega/8$. Using Lemma \ref{siga},
	 we obtain that
	\begin{equation}\label{mauh2}
\sigma(s_2-s_1, W) \leq  \alpha + 3 \omega/8 < \omega/2 \ \ \ \text{whenever}\ \ \ s_1,s_2 \in S_i, s_1 \neq s_2.
\end{equation}
Denote  $\pi\coloneqq  \pi_W$. By Lemma \ref{silip}, \eqref{mauh2}
 implies that
 there exists an $\omega$-Lipschitz mapping $g_i: \pi(S_i) \to W^{\perp}$ such that
 $S_i = \{\tau+ \vf^i(\tau):\ \tau \in U_i^t\} = \{w+ g_i(w):\ w \in \pi(S_i)\}$. 
  Setting  $\xi(t)\coloneqq  t+ \vf^i(t)$, $t \in U_i^t$, and $\kappa\coloneqq  \pi \circ \xi$, we
	 easily see that $\kappa$ is a homeomorphism of $U_i^t$ onto $\pi(S_i)$. Since both $W_i$ and $W=W_j$
	 are homeomorphic to $\R^k$, Brouwer's Invariance of Domain Theorem (see e.g.\ \cite[Ch.~IV, 7.4]{Dold80}) 
	implies that the set $\pi(S_i)$
	 is open in $W$. Using Lemma \ref{L:twoDCgraphs} we obtain that  $g_i$ is locally DC
	 on $\pi(S_i)$. Now choose $\eta(x)$ so small that (i) holds and $P_i \cap B(x, \eta(x)) \subset S_i$ for each $i$ with $x \in P_i$. Then (ii) clearly holds with $D_i\coloneqq  \pi(S_i)$ and $\psi_i\coloneqq g_i|_{D_i}$.
	\end{proof}

\subsection{Main results}

\begin{lemma}\label{Z}
	Let $f$ be a  DC function in $\R^d$, $c>0$, and let $P_1,\dots, P_s$ be 
	 DC surfaces of dimension $0 < k< d$ in $\R^d$. Let $A \subset P\coloneqq  P_1\cup\dots \cup P_s$ be a bounded set such that $f(x)=0$ for each $x \in A$.   Then there exists a set $T \subset \R^d$ which is a finite
 union of DC surfaces of dimension $k-1$ such that, if $x \in A \setminus T$, then
\begin{equation}\label{impl}
  \limsup_{p \to x, p \in P}  \frac{f(p)}{|p-x|} \leq c.
\end{equation}
\end{lemma}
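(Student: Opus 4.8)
The plan is to treat each surface $P_i$ separately and to push the whole problem down to the $k$-dimensional parameter space of $P_i$, where the relevant exceptional set can be covered by Corollary~\ref{dcsubd2}. Write $P_i=\{t+\vf^i(t):t\in W_i\}$ with $W_i\in G(d,k)$ and $\vf^i:W_i\to W_i^\perp$ a Lipschitz DC mapping, and set $\Phi_i(t)\coloneqq t+\vf^i(t)$, so that $\Phi_i$ is a bijection of $W_i$ onto $P_i$ with $\pi_{W_i}\circ\Phi_i=\mathrm{id}$. The function $\tilde f_i\coloneqq f\circ\Phi_i$ is DC on $W_i$ by Lemma~\ref{vldc}~(i),(iii),(iv),(vi), and it vanishes on $\pi_{W_i}(A\cap P_i)$ since $f$ vanishes on $A$. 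The first observation is that, as $\pi_{W_i}$ is $1$-Lipschitz, $|p-x|\ge|\pi_{W_i}(p)-\pi_{W_i}(x)|$ for $p\in P_i$; hence, writing $t=\pi_{W_i}(p)$ and $t_0=\pi_{W_i}(x)$, the quotient $f(p)/|p-x|$ is at most $\tilde f_i(t)/|t-t_0|$ whenever $f(p)>0$ (and is negative otherwise). Thus it suffices to control $\limsup_{t\to t_0}\tilde f_i(t)/|t-t_0|$ on $W_i$.

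The second, and main, step is the inequality
$$\limsup_{t\to t_0}\frac{\tilde f_i(t)-\tilde f_i(t_0)}{|t-t_0|}\ \le\ \max_{w\in\partial\tilde f_i(t_0)}|w|,$$
valid at every $t_0$. I would prove it by Lebourg's mean value theorem (\cite[Theorem~2.3.7]{Cl90}): for $t\ne t_0$ one has $\tilde f_i(t)-\tilde f_i(t_0)=\langle w,t-t_0\rangle$ for some $w\in\partial\tilde f_i(\xi)$ with $\xi$ on the segment $\overline{t_0t}$, so the difference quotient is at most $|w|$; since $\xi\to t_0$ as $t\to t_0$, the upper semicontinuity of the Clarke subdifferential (see \eqref{uzcl}) forces $|w|\le\max_{w'\in\partial\tilde f_i(t_0)}|w'|+o(1)$. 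Consequently, at every $t_0$ with $\tilde f_i(t_0)=0$ for which $\limsup_{t\to t_0}\tilde f_i(t)/|t-t_0|>c$, there must exist $w\in\partial\tilde f_i(t_0)$ with $|w|>c$. Hence the ``bad'' parameters form a subset of
$$C_i\coloneqq\{t_0\in\pi_{W_i}(A\cap P_i):\ \tilde f_i(t_0)=0\ \text{and}\ |w|>c\ \text{for some}\ w\in\partial\tilde f_i(t_0)\},$$
which is bounded (as $A$ is bounded) and, when nonempty, satisfies the hypotheses of Corollary~\ref{dcsubd2} in $W_i\cong\R^k$ with $\ep=c$. Thus $C_i$ can be covered by finitely many DC surfaces of dimension $k-1$ in $W_i$.

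Finally I would assemble $T$. Applying Lemma~\ref{sloz} to each covering surface of $C_i$ together with the Lipschitz DC map $\vf^i$ turns it into a DC surface of dimension $k-1$ in $\R^d$; let $T$ be the finite union of all these over $i=1,\dots,s$. Then $T\supset\Phi_i(C_i)$ for every $i$, so if $x\in A\setminus T$ and $x\in P_i$, its projection $t_0=\pi_{W_i}(x)$ cannot lie in $C_i$, whence $\limsup_{t\to t_0}\tilde f_i(t)/|t-t_0|\le c$ and therefore $\limsup_{p\to x,\,p\in P_i}f(p)/|p-x|\le c$ by the reduction of the first paragraph. Since each $P_i$ is closed, only indices $i$ with $x\in P_i$ contribute to an approach $p\to x$ inside $P=\bigcup_iP_i$, so the $\limsup$ over $P$ is the maximum of the finitely many $\limsup$s over the individual $P_i$, and \eqref{impl} follows. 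The principal obstacle is the second step: converting the geometric growth condition, a $\limsup$ of difference quotients along \emph{arbitrary} sequences, into the pointwise Clarke-subgradient condition required by the covering results of Section~\ref{Sec_sing}; once this bound is secured, Corollary~\ref{dcsubd2} and Lemma~\ref{sloz} finish the argument.
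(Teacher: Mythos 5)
Your proposal is correct and follows essentially the same route as the paper: reduce to each $P_i$ separately, transfer $f$ to the parameter space via $\Phi_i$ (where $\tilde f_i=f\circ\Phi_i$ is DC and the $1$-Lipschitz projection controls the difference quotients), use Lebourg's mean value theorem together with the upper semicontinuity of the Clarke subdifferential to produce a subgradient of norm $>c$ at every bad parameter, cover that set by Corollary~\ref{dcsubd2}, and lift the cover back by Lemma~\ref{sloz}. The only cosmetic difference is that you isolate the clean inequality $\limsup_{t\to t_0}(\tilde f_i(t)-\tilde f_i(t_0))/|t-t_0|\le\max_{w\in\partial\tilde f_i(t_0)}|w|$, while the paper argues directly along a witnessing sequence and passes to a limit of subgradients via \eqref{uzcl}.
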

	\begin{proof}
	Denote
	 $$  Z_i\coloneqq  \{x\in A \cap P_i:\   \limsup_{p \to x, p \in P_i}  \frac{f(p)}{|p-x|} > c\}.$$
	Since  \eqref{impl}  clearly holds for each  $x \in A \setminus \bigcup_{i=1}^s Z_i$, it is sufficient to prove that, for each fixed $1\leq i \leq s$,
	\begin{equation}\label{proi}
	\text{$Z_i$ can be covered by finitely many $(k-1)$-dimensional DC surfaces.}
	\end{equation}
  Since $P_i$ is a $k$-dimensional  DC surface, we can find $W \in G(d,k)$
	and a DC mapping $\vf: W \to W^{\perp}$ such that $P_i= \{w+\vf(w):\ w \in W\}$.
	 Set $\pi\coloneqq  \pi_W$ and $\Phi(w)\coloneqq  w + \vf(w),\ w \in W$.
	 Then $\Phi$ is a DC mapping and
  $\Phi\coloneqq  (\pi |_{P_i} )^{-1}$.
Set $D\coloneqq  \pi(Z_i)$ and 
  $\hat f\coloneqq  f \circ \Phi$. By Lemma \ref{vldc}~(i), (vi), (iv),  $\hat f$ is  a DC function
 on $W$ and $\hat f(y)=0$ for each $y \in D$. Now consider an arbitrary $d \in D$.
 Then $x\coloneqq  \Phi(d) \in Z_i$ and so there exists a sequence $(x_n)$ such that 
$x_n\in P_i$,\ $x_n \to x$ and  $\frac{f(x_n)}{|x_n-x|} \geq c$. Set $\hat x_n: = \pi(x_n)$.
  Since $\pi$ is $1$-Lipschitz,
 we have   $\hat f(\hat x_n) \geq  c |\hat x_n- d|$. 
  Lebourg's mean-value theorem (\cite[Theorem 2.3.7]{Cl90}) implies that there exist points
	 $u_n \in W$ and $\alpha_n \in \partial \hat f(u_n)$ such that $u_n \to d$ and
		$$ \langle \alpha_n, \hat x_n - d\rangle = \hat f(\hat x_n) - \hat f(d) = \hat f(\hat x_n) \geq  c |\hat x_n- d|.$$
 Consequently $|\alpha_n| \geq c$ and therefore   \eqref{uzcl} easily implies that
 there exists
 $y^* \in \partial \hat f(d)$ with $|y^*| \geq c$.
Since $D=\pi(Z_i)$ is bounded, identifying $W$ with $\R^k$ and
 using Corollary \ref{dcsubd2}, we obtain that $D$ can be covered  by finitely many
  $(k-1)$-dimensional DC surfaces  $S_1$,\dots, $S_m$ in $W = \R^k$. Then $Z_i$ is covered
 by $Q_i\coloneqq  \Phi(S_i)$, $i=1,\dots,m$, and each $Q_i$ is by Lemma \ref{sloz} a $(k-1)$-dimensional DC surface
 in $\R^d$; thus \eqref{proi} holds.
	\end{proof}
	
	\begin{lemma}\label{71h}
	Let  $0<k<d$ and $W \in G(d,k)$. Let  $D \subset W$ be a closed set, $\omega>0$
	 and let $\psi_i: D \to W^{\perp}$, $i=1,\dots,p$,  be $\omega$-Lipschitz mappings. 
	 Denote $H_i\coloneqq  \{(t,\psi_i(t)):\ t \in D\}$, $i=1,\dots,p$. Let $d \in D$, $z_0,\, z_1 \in   W^{\perp}$, and $\kappa: [0,1] \to \R^d$ be a continous mapping such that $\kappa(0)= d + z_0$,
	 $\kappa(1)= d +  z_1$ and $\kappa([0,1]) \subset H_1\cup\dots\cup H_p$. Then
	\begin{equation}\label{odhdiam}
	\diam \kappa([0,1]) \geq  \frac{|z_1-z_0|}{p \omega}.
	\end{equation}
	\end{lemma}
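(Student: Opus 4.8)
The plan is to reduce the statement to a one-dimensional estimate for the component of $\kappa$ in the direction of $z_1-z_0$, and then to exploit that \emph{along each single graph} $H_i$ this component is an $\omega$-Lipschitz image of the $W$-projection of the curve, which has small diameter. Write $\rho\coloneqq\diam\kappa([0,1])$ and assume $z_0\neq z_1$ (otherwise the right-hand side is $0$ and there is nothing to prove). Set $v\coloneqq(z_1-z_0)/|z_1-z_0|\in W^\perp$, $G\coloneqq|z_1-z_0|$, and define the continuous functions $g(s)\coloneqq\langle\pi_{W^\perp}\kappa(s),v\rangle$ and $\tau(s)\coloneqq\pi_W\kappa(s)$. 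Then $g(0)=\langle z_0,v\rangle$, $g(1)=\langle z_1,v\rangle$, and $g(1)-g(0)=G$.

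First I would invoke the intermediate value theorem: for every level $\lambda\in[0,G]$ there is a parameter $s_\lambda\in[0,1]$ with $g(s_\lambda)=g(0)+\lambda$. Since $\kappa(s_\lambda)\in H_1\cup\dots\cup H_p$, I may choose an index $\iota(\lambda)\in\{1,\dots,p\}$ with $\kappa(s_\lambda)\in H_{\iota(\lambda)}$; because $H_i$ is the graph of $\psi_i$ over $D\subset W$, this forces $\tau(s_\lambda)\in D$ and, setting $h_i\coloneqq\langle\psi_i(\cdot),v\rangle$ (an $\omega$-Lipschitz function on $D$ since $|v|=1$ and $\psi_i$ is $\omega$-Lipschitz), $g(s_\lambda)=h_{\iota(\lambda)}(\tau(s_\lambda))$.

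The crux is then a covering estimate. Partition $[0,G]=\bigcup_{i=1}^p\Lambda_i$ according to the chosen index, $\Lambda_i\coloneqq\{\lambda\in[0,G]:\iota(\lambda)=i\}$. For $\lambda,\lambda'\in\Lambda_i$ one has $|\lambda-\lambda'|=|h_i(\tau(s_\lambda))-h_i(\tau(s_{\lambda'}))|\le\omega|\tau(s_\lambda)-\tau(s_{\lambda'})|\le\omega\rho$, because $\pi_W$ is $1$-Lipschitz and hence $\diam\tau([0,1])\le\rho$. Thus each nonempty $\Lambda_i$ is contained in an interval of length at most $\omega\rho$, and these (at most $p$) intervals cover $[0,G]$. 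Since an interval of length $G$ cannot be covered by $p$ intervals each of length at most $\omega\rho$ unless $G\le p\,\omega\rho$, I conclude $\rho\ge G/(p\omega)$, which is exactly \eqref{odhdiam}.

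The step I expect to be the main obstacle is obtaining the sharp constant $p$ rather than a worse one. The tempting approach — following the curve and recording the graphs it passes through, bounding the jump of the relevant height at each switch between graphs by $2\omega\rho$ — yields only denominator $2(p-1)\omega$, which is too weak for $p\ge 3$. The decisive point is to pigeonhole over the \emph{levels} $\lambda\in[0,G]$ rather than over switch points: assigning to each level a graph realizing it confines, for a fixed graph, all realized levels to an $\omega$-Lipschitz image of a set of diameter $\le\rho$, i.e.\ to an interval of length $\le\omega\rho$. I note also that the argument uses only $\pi_{W^\perp}\kappa(0)=z_0$, $\pi_{W^\perp}\kappa(1)=z_1$ and $\kappa([0,1])\subset\bigcup_iH_i$; the common base point $d$ of the two endpoints plays no role, and working with diameters of the index sets $\Lambda_i$ avoids any measurability question.
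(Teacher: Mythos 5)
Your proof is correct, and it reaches the sharp constant $p\omega$ by a genuinely different decomposition than the paper's. The paper works along the \emph{parameter} interval: it constructs switch times $0=u_0<u_1<\dots<u_q=1$ with $q\le p$ and pairwise distinct indices $j_0,\dots,j_{q-1}$ such that $\kappa(u_k)$ and $\kappa(u_{k+1})$ both lie on the single graph $H_{j_k}$ (using that each $H_i$ is closed, so the last time the curve meets $H_{j_{k-1}}$ is attained and that point already lies on the next graph); the $W^{\perp}$-increments over these $q$ legs telescope to at least $|z_1-z_0|$, so some leg contributes at least $|z_1-z_0|/p$, while each leg, having both endpoints on one $\omega$-Lipschitz graph, contributes at most $\omega\cdot\diam\kappa([0,1])$. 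You instead partition the \emph{level} interval $[0,|z_1-z_0|]$: every level is attained by the intermediate value theorem and realized on some graph, and all levels realized on a fixed graph fit into an interval of length at most $\omega\cdot\diam\kappa([0,1])$, so $p$ such intervals must cover $[0,|z_1-z_0|]$. Both arguments rest on the same elementary estimate for two points of a single $\omega$-Lipschitz graph; yours trades the paper's slightly delicate construction of switch times (and the closedness of the $H_i$, which you never use) for an uncountable choice $\lambda\mapsto\iota(\lambda)$, which is harmless since you only need interval hulls, not measurability. Your closing observation that only the $W^\perp$-projections of the endpoints matter is also accurate of the paper's proof, which likewise never uses the common base point $d$.
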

	\begin{proof} 
	Denote $\pi\coloneqq  \pi_W$ and $\tilde \pi\coloneqq  \pi_{W^{\perp}}$. Further observe that all $H_i$ are closed sets. 
	 Choose $j_0$ such that  $\kappa(0) \in H_{j_0}$. 
	Set $u_0\coloneqq 0$ and $u_1\coloneqq  \max\{u\in [0,1]:\ \kappa(u) \in H_{j_0}\}$. Then clearly 
	either $u_1 = 1$ or
	$\kappa(u_1) \in H_{j_1}$
      for some $j_1 \neq j_0$. Then we define $u_2\coloneqq  \max\{u\in [0,1]:\ \kappa(u) \in H_{j_1}\}$, and so on.
       By this procedure
we obtain numbers  $0=u_0 <u_1  \dots < u_q = 1$ with $1 \leq q \leq p$ and pairwise different
 indexes $j_0,j_1,\dots,j_{q-1}$ such that $\{\kappa(u_k), \kappa(u_{k+1})\} \subset H_{j_k}$
 for each $0\leq k \leq q-1$. Since clearly  $\sum_{k=0}^{q-1} |\tilde{\pi}(\kappa(u_{k+1}))-
 \tilde{\pi}(\kappa(u_{k}))| \geq |z_1- z_0|$, we can choose  $0\leq k \leq q-1$ for which
$|\tilde{\pi}(\kappa(u_{k+1}))-
 \tilde{\pi}(\kappa(u_{k}))| \geq (1/p) |z_1- z_0|$. Since
$$ \kappa(u_{k}) = \pi(\kappa(u_{k})) + \psi_{j_k}(\pi(\kappa(u_{k})))\ \ \text{and}\ \ 
 \kappa(u_{k+1}) = \pi(\kappa(u_{k+1})) + \psi_{j_k}(\pi(\kappa(u_{k+1}))),$$
  we have
$$ |\tilde{\pi}(\kappa(u_{k+1}))-
 \tilde{\pi}(\kappa(u_{k}))|= |\psi_{j_k}(\pi(\kappa(u_{k+1}))) - \psi_{j_k}(\pi(\kappa(u_{k})))|
 \leq \omega |\pi(\kappa(u_{k+1})) - \pi(\kappa(u_{k}))|.$$
Consequently we obtain \eqref{odhdiam}, since
$$ |\kappa(u_{k+1}) - \kappa(u_{k})| \geq |\pi(\kappa(u_{k+1})) - \pi(\kappa(u_{k}))| 
\geq \frac{1}{p \omega} |z_1- z_0|.$$
	\end{proof}
	
\begin{proposition}\label{hlavni}
Let $M \subset \R^d$ be a compact WDC set and $A$ a  nonempty relatively open subset of $M$. Let $A \subset P_1\cup\dots \cup P_s\eqqcolon P$, where all $P_i$
 are DC surfaces of dimension $0<k<d$. Then $A \setminus A^{[k]}$ can be covered by finitely many
 DC surfaces of dimension $k-1$.
\end{proposition}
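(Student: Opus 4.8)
The plan is to prove the stronger inclusion $A\setminus A^{[k]}\subset E\cup T$, where $E$ and $T$ are the exceptional sets produced by Lemma~\ref{transv} and Lemma~\ref{Z}; since each of these is a finite union of $(k-1)$-dimensional DC surfaces, this gives the conclusion. First I would fix a proper $1$-Lipschitz DC aura $f=f_M$, together with $\ep=\ep_M$ and $H=H_M$ as in Notation~\ref{fixaury}, and choose once and for all two small constants $\omega<\ep/(6s)$ and $c<\ep/8$. Applying Lemma~\ref{transv} with this $\omega$ yields $E$ and, for every $x\in(P\cap M)\setminus E$, a radius $\eta(x)$ and a \emph{common} subspace $W\in G(d,k)$ over which all sheets $P_i$ through $x$ are $\omega$-Lipschitz locally DC graphs $\psi_i$ on $U\coloneqq B(x,\eta(x))$; applying Lemma~\ref{Z} with this $c$ (to $f$, $P$ and $A$) yields $T$ so that $\limsup_{p\to x,\,p\in P}f(p)/|p-x|\le c$ whenever $x\in A\setminus T$. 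It then suffices to show that every $x\in A\setminus(E\cup T)$ lies in $A^{[k]}$.

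Fix such an $x$ and write $\pi\coloneqq\pi_W$. Shrinking $\eta(x)$ I may assume $A\cap U=M\cap U$ (as $A$ is relatively open in $M$), that $f<\ep$ on $U$, and that $f(p)\le 2c\,|p-x|$ for $p\in P\cap U$. Since $M\cap U\subset P$ has empty interior, $M\cap U$ lies in $\partial M$. The two facts I would establish are: (I) $\pi$ is injective on $M\cap U$, i.e.\ $M$ meets each fibre $\pi^{-1}(d)$ in at most one point; and (II) $\pi(M\cap U)$ contains a $W$-neighbourhood of $\pi(x)$. Granting these, on a small ball $O\subset W$ around $\pi(x)$ the set $M$ is the graph of a single function $\Psi\colon O\to W^\perp$ with $\Psi(w)\in\{\psi_1(w),\dots,\psi_m(w)\}$ (the finitely many sheets through $x$, the others being excluded near $x$ by Lemma~\ref{transv}~(i)); $\Psi$ is continuous because $\pi$ restricts to a homeomorphism of the compact fibred piece of $M$ onto $\overline O$, so the Mixing Lemma~\ref{vldc}~(ix) makes $\Psi$ DC, and Lemma~\ref{vldc}~(viii) upgrades it to a genuine DC surface agreeing with $A$ near $x$. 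Hence $x\in A^{[k]}$.

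For (I) I would argue by contradiction: if two points $y_0=d+z_0$ and $y_1=d+z_1$ of $M\cap U$ share a base point $d\in W$, choose a pair realizing the minimal fibre distance, so that the open segment $(y_0,y_1)$, lying in the fibre $d+W^\perp$, misses $M$ entirely. With $\delta\coloneqq|z_1-z_0|<\ep$ this segment is an admissible curve for Lemma~\ref{L:connecting}, producing a path $\kappa\subset\partial M$ from $y_0$ to $y_1$ with $\diam\kappa<6\delta/\ep$. As $\kappa$ stays in $M\cap U$, hence in the union of the $\omega$-Lipschitz graphs over $W$, Lemma~\ref{71h} forces $\diam\kappa\ge\delta/(s\omega)$; together these give $1/(s\omega)<6/\ep$, i.e.\ $\omega>\ep/(6s)$, contradicting the choice of $\omega$.

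For (II) I would use the retraction quantitatively. Picking one of the sheets $\psi_l$ through $x$ and setting $g(w)\coloneqq\pi(H(w+\psi_l(w),1))$ for $w$ near $\pi(x)$, Lemma~\ref{DL}~(iv) at $t=0$ gives $|H(p,1)-p|\le\tfrac2\ep f(p)$, which with the growth bound $f(p)\le 2c\,|p-x|$ and $c<\ep/8$ yields $|g(w)-w|<|w-\pi(x)|$ for $w\ne\pi(x)$, while $g(\pi(x))=\pi(x)$ since $H(x,1)=x$. A standard Brouwer-degree and straight-line-homotopy argument then shows that $g$ maps a small ball onto a neighbourhood of $\pi(x)$; as $g$ takes values in $\pi(M\cap U)$ (because $H(\cdot,1)\in M$), this neighbourhood lies in $\pi(M\cap U)$, proving (II). The main obstacle is engineering the two metric estimates so that the single fixed pair of constants $(\omega,c)$ simultaneously drives the connecting-lemma contradiction in (I) and the displacement bound $|g(w)-w|<|w-\pi(x)|$ needed for the degree argument in (II); a secondary subtlety is checking that the short path $\kappa$ from Lemma~\ref{L:connecting} really remains inside the local graph chart, so that Lemma~\ref{71h} applies with the common domain $W$.
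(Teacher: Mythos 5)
Your proposal is correct and shares the paper's overall architecture: both reduce the claim to showing $A\setminus(E\cup T)\subset A^{[k]}$ with $E$ from Lemma~\ref{transv} and $T$ from Lemma~\ref{Z}, and both prove injectivity of $\pi_W$ on $M$ near $x$ by exactly the same mechanism (a fibre segment missing $M$, Lemma~\ref{L:connecting} producing a short path in $\partial M$, Lemma~\ref{71h} forcing that path to be long, with $\omega$ of order $\ep/s$ chosen so the two bounds clash). Where you genuinely diverge is the surjectivity step (II). The paper argues via the metric projection onto $\pi_W(M^*)$: if some $t$ were outside the image, it lifts the nearest point $z$ to $y\in M$ along one sheet $\psi_j$, walks along that sheet towards $t$, and uses Lemma~\ref{DL}~(v) (i.e.\ $f\geq\tfrac{\ep}{2}\dist(\cdot,M)$) to violate the growth bound furnished by $T$ \emph{at the point $y$}. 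You instead compose the graph map of one sheet with the retraction $H(\cdot,1)$, bound its displacement by Lemma~\ref{DL}~(iv) combined with the growth bound at $x$ itself, and invoke Brouwer degree; this is arguably more conceptual (openness of the projected image comes from degree theory rather than an ad hoc nearest-point construction), at the price of importing degree theory where the paper stays elementary, and it forces the extra constraint $c<\ep/8$ which the paper's choice $c=\ep/5$ does not need. Two bookkeeping points that you flag but do not execute are genuinely required for the argument to close: (a) the path $\kappa$ must be shown to remain in the region where all sheets are $\omega$-Lipschitz graphs over the common $W$ before Lemma~\ref{71h} applies --- this is exactly what the paper's nested radii $\rho,\delta$ with $\delta(2+6/\ep)<\rho$ accomplish; and (b) your ``minimal fibre distance'' pair works only because each fibre meets $M\cap U$ in at most $s$ points (one per sheet), which is what guarantees that the minimum is positive and that the open segment between the minimizing pair avoids $M$. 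Neither is a missing idea, but both must be written out.
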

\begin{proof}
Let $f=f_M$ and $0<\ep=\ep_M<1$ be as in Notation \ref{fixaury}.
 Choose a set $E$ by Lemma \ref{transv} corresponding to $\omega\coloneqq  \frac{\ep}{8s}$. 
 Further choose a set $T$ by Lemma \ref{Z} corresponding to $c\coloneqq  \ep/5$. Since the 
 set $E \cup T$ is a finite union of DC surfaces of dimension $k-1$, it is sufficient
 to prove  $A \setminus (E \cup T) \subset A^{[k]}$.

To this end, choose an arbitrary $x \in A \setminus (E \cup T)$. By the choice of $E$, 
 there exist $\eta(x)>0$ and  $W \in G(d,k)$  such that,  denoting $U\coloneqq  B(x,\eta(x))$,  conditions
 (i) and (ii) of Lemma \ref{transv} hold. So, if $x \in P_i$, we can choose corresponding  $D_i$ and $\psi_i$. We can and will suppose that, for some $1\leq p\leq s$, $\{i: x \in P_i\}
 = \{1,\dots,p\}$. Recall that (see Lemma \ref{transv}~(ii))
\begin{equation}\label{lippsi}
\psi_i\ \ \text{is}\  \omega-\text{Lipschitz\ on}\ D_i,\ \ \ i=1,\dots,p.
\end{equation}

   Denote $\pi\coloneqq  \pi_W$, $\tilde \pi\coloneqq  \pi_{W^{\perp}}$ and $t_0\coloneqq  \pi(x)$.
	Since $T$ is closed, we can choose $\rho \in (0,\ep)$ 
	 so small that
	\begin{equation}\label{mimoet}
\overline{B}(x, \rho) \subset  U \setminus  T, 
\end{equation}
\begin{equation}\label{ska}
	\text{$\overline{B}(t_0, \rho) \cap W \subset D_i$ for $1 \leq i \leq p$ and}
	\end{equation}
	\begin{equation}\label{marho}
	  M \cap \overline{B}(x, \rho) = A \cap  \overline{B}(x, \rho).
	\end{equation}
Note that
\begin{equation}\label{mhvez}
   M \cap \overline{B}(x, \rho) = A \cap  \overline{B}(x, \rho)\subset A \cap U \subset
 P_1\cup\dots\cup P_p. 
\end{equation}
Choose $\delta \in (0,\rho)$ such that
	\begin{equation}\label{voldel}
	\delta \left(2 + \frac{6}{\ep}\right) < \rho.
	\end{equation}
	We will now prove that 
   \begin{equation}\label{jedn}
\text{  for each  $t \in \overline{B}(t_0,\delta)\cap W$  there is at most one $x \in A \cap U$ with $\pi(x)=t$.}
   \end{equation}
   So suppose to the contrary that there exist $t \in \overline{B}(t_0,\delta)\cap W$  and $x_1, x_2 \in A \cap U$ such that
   $x_1 \neq x_2$ and $\pi(x_1)= \pi(x_2)= t$. Using \eqref{mhvez}, we can suppose that $\overline{x_1,x_2} \cap A = \{x_1, x_2\}$.
    Choose indices $i_1 \neq i_2$ such that $x_1 = t + \psi_{i_1}(t)$ and  $x_2 = t + \psi_{i_2}(t)$.  By \eqref{lippsi} we obtain
    $$ |\psi_{i_k}(t) - \psi_{i_k}(t_0)| \leq (1/8)|t-t_0| \leq (1/8)\delta ,\ \ \ k=1,2. $$
    Thus  $|x_1-x_2| =  |\psi_{i_1}(t) - \psi_{i_2}(t)|\leq \delta < \ep$.
		Now set $\vf(u)\coloneqq  x_1 + u (x_2-x_1),\ u \in [0,1]$. 
		 By Lemma \ref{L:connecting} 
		there exists a continuous $\kappa: [0,1] \to \partial M$ for which
		\begin{equation}\label{malydia}
\kappa(0)=x_1,\ \kappa(1)=x_2,\ \ \text{and}\ \ \diam(\kappa([0,1])) < \frac{6 |x_1-x_2|}{\ep}.
\end{equation}
Using also \eqref{lippsi}  and \eqref{voldel}, for each $0\leq u \leq 1$ we obtain
\begin{equation}\label{karho}
 |\kappa(u) - x| \leq |x_1 -x| + \diam(\kappa([0,1])) < \delta + (1/8) \delta + \frac{6\delta}{\ep} <
 \rho,
\end{equation}
 and so $\pi(\kappa(u)) \in  B(t_0,\rho)\cap W$ and $\kappa(u) \in P_1\cup\dots\cup P_p$
 by \eqref{mhvez}. Therefore,
denoting  $D\coloneqq  \overline{B}(t_0,\rho)\cap W$ and  $H_i\coloneqq  \{(t,\psi_i(t)):\ t \in D\}$, $i=1,\dots,p$,
 we have   $\kappa([0,1]) \subset H_1\cup\dots\cup H_p$ by \eqref{ska}.  
 Thus we can use Lemma \ref{71h} and obtain  $\diam(\kappa([0,1]))  \geq  |x_1-x_2|/(p \omega)  \geq  8 |x_1-x_2|/\ep$, which contradics \eqref{malydia}.

Now, setting $M^*\coloneqq  M \cap  \overline{B}(x,\delta) = A \cap  \overline{B}(x,\delta)\subset A \cap U$, we will show that
\begin{equation}\label{surj}
B(t_0,\delta/8) \cap W \subset \pi(M^*).
\end{equation}
 So suppose, to the contrary, that
 there exists $t \in (B(t_0, \delta/8)\cap W) \setminus \pi(M^*)$. Since $\pi(M^*)$ is compact and
 $t_0 \in \pi(M^*)$, we can choose $z \in \pi(M^*) \cap B(t_0, \delta/4)$ 
 such that $|z-t|= \dist(t, \pi(M^*))$. By \eqref{mhvez} we can find
	 $1 \leq j \leq p$ such that $y\coloneqq  z + \psi_j(z) \in M^*$. Setting  $z_n\coloneqq  z + (t-z)/n$, we have clearly
  $|z_n-z|= \dist(z_n, \pi(M^*))$.
	By \eqref{ska} $z_n \in B(t_0,\delta/4)\cap W
	 \subset D_j$
 and consequently  we can define  $y_n: = z_n + \psi_j(z_n) \in P_j$.
 Observe that
$$ |y-x| \leq  |z-t_0| + |\psi_j(z)-\psi_j(t_0)| < \delta/4 + (1/8) \delta/4 < \delta/2.$$
 It easily implies that, for all sufficiently large $n$,
 we have  
	$$\dist(y_n,M) = \dist(y_n, M^*) \geq \dist(z_n, \pi(M^*)) = |z_n-z|,$$
	 and so  $f(y_n) \geq  (\ep/2) |z_n-z|$  by Lemma \ref{DL}~(v). Further
	$$ |y_n-y| \leq |z_n-z| + |\psi_j(z_n)- \psi_j(z)| \leq 2 |z_n-z|.$$
	
	Consequently  $\limsup_{n \to \infty} f(y_n)/|y_n-y| \geq \ep/4$, which contradicts 
	 the choice of $T$, since $y_n \in P$, $y_n \to y$, $y \in A\setminus T$ by \eqref{mimoet} and $M^* \subset A$. Thus, \eqref{surj} is proved.
	
	Using \eqref{jedn} and \eqref{surj}, we obtain that there exists
 a uniquely determined function $\vf: B(t_0, \delta/8) \cap W \to W^{\perp}$ such that
 $u+ \vf(u) \in  A \cap U$ for each $u \in B(t_0, \delta/8) \cap W$. 
  Since $u+ \vf(u) \in  M^*$ for each $u \in B(t_0, \delta/8)\cap W$ and $M^*$ is compact, we  obtain
	 that $\vf$ is continuous. (Indeed, $g\coloneqq \pi|_{M^*}$ is continuous and injective by \eqref{jedn}, \eqref{mhvez}, and so
	 $g^{-1}$ is continuous. Now use that $u+ \vf(u) = g^{-1}(u),\ u \in B(t_0, \delta/8)\cap W$.)
	
	Since $\vf(u) \in \{\psi_1(u),\dots, \psi_p(u)\},\ u \in B(t_0, \delta/8)\cap W$, Lemma \ref{vldc}~(ix)
	 gives that $\vf$ is a DC mapping.
	
	Since
	$$ A \cap U \cap \pi^{-1}( B(t_0, \delta/8)\cap W) = \{u+ \vf(u):\ u \in B(t_0, \delta/8)\cap W\},$$
	 we easily obtain  $x \in A^{[k]}$.
\end{proof}

Now we can prove our  main
 results on the structure of WDC sets.

Recall that, by definition, DC surfaces of dimension $0$ in $\R^d$ are points and the only
 DC surface of dimension $d$ in $\R^d$  is $\R^d$.

\begin{theorem}\label{struc}
Let $M \subset \R^d$ be a compact WDC set and $A$ a   nonempty relatively open subset of $M$ of topological dimension $k$.
If $k=0$, then $A$ is a finite set. If $k>0$, then

\begin{enumerate}
\item[(i)] $A^{[k]} \neq \varnothing$ (and so $A^{[k]}$ is a DC manifold of dimension $k$).
\item[(ii)]  $A \setminus A^{[k]}$ can be covered by finitely many
 DC surfaces of dimension $k-1$.

\item[(iii)]  $A$ can be covered by finitely many
 DC surfaces of dimension $k$.
\end{enumerate} 
\end{theorem}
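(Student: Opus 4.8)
The plan is to obtain the theorem as a bookkeeping consequence of Proposition~\ref{hlavni} and Proposition~\ref{wdcpokr}, since all the analytic substance is already contained in those statements (and in the transversality Lemma~\ref{transv}). The engine is a descending induction on the dimension of a finite family of DC surfaces covering $A$, steered by three elementary facts about the topological (covering) dimension of subsets of $\R^d$, which I use throughout: it is monotone under inclusion; a $j$-dimensional DC manifold has topological dimension $j$ (being locally a graph homeomorphic to an open subset of $\R^j$); and a subset of $\R^d$ has topological dimension $d$ if and only if it has nonempty interior.

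First I would set up the starting covering and record the crucial dimension bound. If $\dim_{\mathrm{top}}A = k < d$ then, by the last fact, $A$ has empty interior in $\R^d$; since $A$ is relatively open in $M$ one checks $\INt A = A\cap\INt M$, whence $A\cap\INt M=\varnothing$ and therefore $A\subset\partial M$ (recall $M$ is closed). By Proposition~\ref{wdcpokr}, $\partial M$---and hence $A$---is covered by finitely many DC surfaces of dimension $d-1$. Next, if $A^{[j]}\neq\varnothing$ then, by the remark following Definition~\ref{ak}, $A^{[j]}$ is a $j$-dimensional DC manifold contained in $A$, so monotonicity gives $\dim_{\mathrm{top}}A\geq j$; consequently
$$A^{[j]}=\varnothing\quad\text{whenever } j>k=\dim_{\mathrm{top}}A.$$

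The descent then reads: for every integer $j$ with $k\le j\le d-1$, $A$ can be covered by finitely many DC surfaces of dimension $j$. This is proved by descending induction on $j$, the base $j=d-1$ being the covering just obtained. For the step from $j+1$ to $j$---where $k\le j\le d-2$, so that $0<j+1<d$---the displayed bound gives $A^{[j+1]}=\varnothing$, hence $A=A\setminus A^{[j+1]}$, and Proposition~\ref{hlavni} applied at dimension $j+1$ covers $A$ by finitely many DC surfaces of dimension $j$. Reading this at $j=k$ gives (iii) when $1\le k\le d-1$, and at $j=0$ (available once $d\ge 2$) it covers $A$ by finitely many $0$-dimensional DC surfaces, i.e.\ by finitely many points, so $A$ is finite when $k=0$; the remaining case $k=0,\,d=1$ is immediate, since then $A\subset\partial M$ is a finite point set by Proposition~\ref{wdcpokr}. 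With (iii) in hand, (ii) is Proposition~\ref{hlavni} applied once more, at dimension $k$, to the covering furnished by (iii); and (i) follows because $A^{[k]}=\varnothing$ would make $A=A\setminus A^{[k]}$ a finite union of closed $(k-1)$-dimensional DC surfaces (Remark~\ref{uzpl}(i)), forcing $\dim_{\mathrm{top}}A\le k-1$, a contradiction. Finally the case $k=d$ is handled directly: $A\subset\R^d$ is itself the single $d$-dimensional DC surface, giving (iii); $\dim_{\mathrm{top}}A=d$ gives $\INt A=A^{[d]}\neq\varnothing$, giving (i); and $A\setminus A^{[d]}=A\cap\partial M$ is covered by Proposition~\ref{wdcpokr}, giving (ii).

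Since Proposition~\ref{hlavni} is available, no serious analytic obstacle remains; the points needing care are purely dimension-theoretic bookkeeping. The one I would double-check is the implication $j>k\Rightarrow A^{[j]}=\varnothing$, i.e.\ that local coincidence of $A$ with a $j$-dimensional DC manifold genuinely forces the topological dimension up to $j$; this rests on monotonicity of covering dimension for subspaces of separable metric spaces together with the fact that a DC manifold has the expected topological dimension. I would also verify that the descent never invokes Proposition~\ref{hlavni} outside its admissible range $0<\,\cdot\,<d$, which is precisely why the inductive step is restricted to $k\le j\le d-2$ and why the boundary cases $k=d$ and $(k=0,\,d=1)$ must be peeled off separately.
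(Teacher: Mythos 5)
Your proof is correct and takes essentially the same route as the paper's: both reduce everything to Proposition~\ref{hlavni} (for dimensions $<d$) together with Proposition~\ref{wdcpokr} (for dimension $d$), using the observation that $A^{[j]}=\varnothing$ lets one drop the dimension of a finite DC-surface covering by one. The paper merely packages your descending induction as the single quantity $k^*$, the minimal $s$ such that $A$ is covered by finitely many $s$-dimensional DC surfaces, and then identifies $k=k^*$; the dimension-theoretic bookkeeping you spell out (monotonicity, the closed sum theorem via Remark~\ref{uzpl}(i), and nonempty interior iff dimension $d$) is exactly what the paper leaves implicit.
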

\begin{proof}
Set   $$k^*\coloneqq   \min\{0\leq s \leq d:\ A\ \ \ \text{can be covered by finitely many
 DC surfaces of dimension}\ \ \ s\}.$$
 If $k^* = 0$, then $A$  is finite and $k=k^*$.
 If $k^*>0$, then Proposition \ref{hlavni} (for $k^*<d$) and Proposition~\ref{wdcpokr} (for $k^*=d$) imply $A^{[k^*]} \neq \varnothing$.
 Consequently $k=k^*$ and so (i) and (iii) are obvious. The property (ii) follows
 from Proposition \ref{hlavni} and Proposition~\ref{wdcpokr}.
\end{proof}

\begin{remark}\label{ZFV}
\begin{enumerate}
\item[(a)]
Obviously, (i) and (ii) imply that $k= \dim_{top} A = \dim_H A$.
\item[(b)]
In the special case when $M$ has positive reach, Federer's results which are stated in \cite{Fe59}
 without a proof and are proved in \cite{RZ}
give that (if $0<k<d$),
 $A^{[k]}$ is a $C^{1,1}$ manifold. In this special case,  properties (ii) and (iii) are contained
 in \cite{RZ}; in \cite{Fe59}  it is  proved that $A \setminus A^{[k]}$ is
 countably $(k-1)$-rectifiable (which implies that $A$ is countably $k$-rectifiable).
\end{enumerate}
\end{remark}
Using Definition \ref{lwdc} (of a locally WDC set) and Proposition \ref{P_local}, it is easy to deduce
 from Theorem \ref{struc} its following ``local version''. 
Namely, properties (ii) and (iii) follow immediately and (i) is a consequence of (ii) (which implies that $\dim_{top}(A\setminus A^{[k]})\leq k-1$).

\begin{corollary}\label{struclok}
Let $M \subset \R^d$ be a closed locally WDC set and $A$ a   nonempty relatively open subset of $M$ of topological dimension $k$.
If $k=0$, then $A$ is an isolated set. If $k>0$, then
\begin{enumerate}
\item[(i)] $A^{[k]} \neq \varnothing$ (and so $A^{[k]}$ is a DC manifold of dimension $k$).
\item[(ii)]  $A \setminus A^{[k]}$ can be locally covered by finitely many
 DC surfaces of dimension $k-1$.
\item[(iii)]  $A$ can be locally covered by finitely many
 DC surfaces of dimension $k$.
\end{enumerate} 
\end{corollary}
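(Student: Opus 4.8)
The plan is to deduce the statement from its compact counterpart, Theorem~\ref{struc}, by localizing at an arbitrary point of $A$ and transporting the conclusions back. \emph{Localization.} Fix $a\in A$ and, writing $A=M\cap O$ with $O$ open, choose $r_2>0$ with $B(a,r_2)\subseteq O$. Since $M$ is locally WDC, Definition~\ref{lwdc} provides a WDC set $A^*$ agreeing with $M$ on a neighbourhood of $a$, which we may take to be $B(a,r_2)$; as $a\in A^*$, Proposition~\ref{P_local} then gives a \emph{compact} WDC set $A^{**}\subseteq A^*$ with $A^{**}\cap B(a,r_2)=A^*\cap B(a,r_2)=M\cap B(a,r_2)$. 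Shrinking to some $0<r_1\le r_2$ with $B(a,r_1)\subseteq O$, the set $\tilde A\coloneqq A^{**}\cap B(a,r_1)=A\cap B(a,r_1)$ is a nonempty relatively open subset of the compact WDC set $A^{**}$ of some topological dimension $k'\le k$ (monotonicity of $\dim_{top}$).

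\emph{Transfer of (ii) and (iii).} I would first record that, because $A$ and $\tilde A$ coincide on the open ball $B(a,r_1)$, the property of locally agreeing with a $k$-dimensional DC surface is the same for $A$ and for $\tilde A$ at each point of $B(a,r_1)$, whence $A^{[k]}\cap B(a,r_1)=\tilde A^{[k]}$ and so $(A\setminus A^{[k]})\cap B(a,r_1)=\tilde A\setminus\tilde A^{[k]}$. Now apply Theorem~\ref{struc} to $(A^{**},\tilde A)$. If $k'=k$, parts (ii), (iii) give directly that $\tilde A\setminus\tilde A^{[k]}$ is covered by finitely many $(k-1)$-dimensional DC surfaces and $\tilde A$ by finitely many $k$-dimensional ones. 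If $k'<k$, then $\tilde A^{[k]}=\varnothing$ (a point there would force $\dim_{top}\tilde A\ge k$), so $\tilde A\setminus\tilde A^{[k]}=\tilde A$, which Theorem~\ref{struc} covers by finitely many DC surfaces of dimension $k'\le k-1$. In the latter case I would enlarge the covering surfaces to the required dimension using the elementary fact that a DC surface of dimension $j$ is contained in one of any dimension $m$ with $j\le m\le d$: for $m=d$ this is $\R^d$, and for $j<m<d$ one writes it as a graph $w\mapsto w+\varphi(w)$ over a $j$-dimensional $W'$, fixes an $m$-dimensional $W\supseteq W'$, splits $\varphi$ into its $W\ominus W'$ and $W^{\perp}$ components $a$ and $c$, and checks that $\psi\coloneqq c\circ\pi_{W'}\colon W\to W^{\perp}$ is Lipschitz and DC by Lemma~\ref{vldc}~(iii),(vi), with graph an $m$-dimensional DC surface containing the original. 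This yields (ii) and (iii) locally at $a$; since $a$ was arbitrary, and the case $k=0$ just says each $\tilde A$ is finite (so $A$ is discrete), those parts are complete.

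\emph{Deduction of (i).} Finally I would upgrade the local covering in (ii) to the global bound $\dim_{top}(A\setminus A^{[k]})\le k-1$. As $\R^d$ is Lindel\"of, the local covers combine into a countable family of $(k-1)$-dimensional DC surfaces $S_l$ whose union contains $A\setminus A^{[k]}$; each $S_l$ is closed (Remark~\ref{uzpl}~(i)), so $A\setminus A^{[k]}$ is the countable union of its relatively closed subsets $(A\setminus A^{[k]})\cap S_l$, each of topological dimension $\le k-1$, and the countable sum theorem of dimension theory gives $\dim_{top}(A\setminus A^{[k]})\le k-1$. Were $A^{[k]}$ empty this would force $\dim_{top}A\le k-1<k$, a contradiction; hence $A^{[k]}\neq\varnothing$ and is a $k$-dimensional DC manifold by the remark after Definition~\ref{ak}.

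The routine parts are pure transcription through the localization; the points needing care are the case $k'<k$ (local dimension strictly below the global one, handled by the surface-enlargement observation), the identity $A^{[k]}\cap B(a,r_1)=\tilde A^{[k]}$, and the single genuinely topological step (ii)$\Rightarrow$(i), which is where the countable sum theorem together with the closedness of DC surfaces is essential.
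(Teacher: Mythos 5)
Your proof is correct and follows essentially the same route as the paper, which simply notes that (ii) and (iii) follow from Theorem~\ref{struc} after localizing via Definition~\ref{lwdc} and Proposition~\ref{P_local}, and that (i) follows from (ii) because the latter forces $\dim_{top}(A\setminus A^{[k]})\leq k-1$. You merely fill in the details the paper leaves implicit (the possible drop of local dimension $k'<k$, the enlargement of lower-dimensional DC surfaces, and the Lindel\"of/countable-sum-theorem step), all of which are handled correctly.
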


An easy consequence of Corollary \ref{struclok} is the following result.

\begin{proposition}\label{slstr1}
Let $A$ be a  nonempty relatively open subset of a closed locally WDC set  $M \subset\R^d$. Define the ``regular part'' of $A$ as
 $A_{reg}: = \bigcup_{i=0}^d A^{[i]}$. Then $A_{reg}$ is open and dense in $A$.
\end{proposition}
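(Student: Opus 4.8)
The plan is to get openness directly from the definitions and to get density by applying the local structure result (Corollary~\ref{struclok}) to an arbitrary relatively open piece of $A$. Openness is immediate: by the remarks following Definition~\ref{ak}, each $A^{[i]}$ is open in $A$, so the finite union $A_{reg}=\bigcup_{i=0}^d A^{[i]}$ is open in $A$ as well. Thus the only real content is density.

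For density I would argue that every nonempty relatively open subset $B\subseteq A$ meets $A_{reg}$, which is the standard characterization of $\overline{A_{reg}}\cap A=A$. The first step is the bookkeeping observation that, since $A$ is relatively open in $M$, such a $B$ is itself a nonempty relatively open subset of $M$, so Corollary~\ref{struclok} applies to $B$ (in place of $A$). Letting $k:=\dim_{top}B$, the corollary yields $B^{[k]}\neq\varnothing$: for $k>0$ this is part (i), and for $k=0$ it holds because $B$ is then an isolated set, so any point of the nonempty set $B$ lies in $B^{[0]}$. I would then pick any $x\in B^{[k]}$.

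The key step is the transfer $B^{[k]}\subseteq A^{[k]}$, which holds by local agreement. Since $B$ is open in $A$, there is a neighbourhood $V'$ of $x$ in $\R^d$ with $A\cap V'=B\cap V'$; and since $x\in B^{[k]}$, there are a neighbourhood $V$ of $x$ and a $k$-dimensional DC surface $S$ with $B\cap V=S\cap V$. On $V\cap V'$ this gives $A\cap(V\cap V')=S\cap(V\cap V')$, so $A$ locally coincides with a $k$-dimensional DC surface at $x$, i.e.\ $x\in A^{[k]}\subseteq A_{reg}$. Hence $B\cap A_{reg}\neq\varnothing$, and as $B$ was arbitrary, $A_{reg}$ is dense in $A$.

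There is no serious obstacle here once Corollary~\ref{struclok} is in hand; the proof is essentially a two-line deduction. The only points requiring care are the two pieces of bookkeeping noted above: verifying that a relatively open subset of $A$ is again relatively open in $M$ (so that the corollary is applicable), and checking that the local DC-surface structure furnished for $B$ descends to $A$ because $A$ and $B$ agree on a neighbourhood of the chosen point.
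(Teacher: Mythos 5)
Your proof is correct, and it is organized differently from the paper's. The paper proves density by induction on the topological dimension $k$ of $A$: it sets $\tilde A\coloneqq A\setminus\overline{A^{[k]}}$, notes that $\tilde A$ is relatively open in $M$, deduces from Corollary~\ref{struclok}~(i) (together with the same transfer observation you make) that $\dim \tilde A<k$ whenever $\tilde A\neq\varnothing$, and then invokes the induction hypothesis on $\tilde A$. You instead run the standard density argument directly: take an arbitrary nonempty relatively open $B\subseteq A$, apply Corollary~\ref{struclok}~(i) to $B$ with $k=\dim_{top}B$ (which need not equal $\dim_{top}A$), and transfer $B^{[k]}\subseteq A^{[k]}$ using that $A$ and $B$ agree near any point of $B$. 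Both arguments rest on exactly the same two ingredients --- part (i) of Corollary~\ref{struclok} and the locality of the property defining $A^{[k]}$ in Definition~\ref{ak} --- but your version avoids the induction entirely and is arguably cleaner; the paper's inductive formulation buys nothing extra here beyond matching the way Corollary~\ref{struclok} was stated for a single fixed $A$. Your handling of the degenerate case $k=0$ (every point of an isolated set lies in $B^{[0]}$) and of openness of $A_{reg}$ is also fine.
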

\begin{proof}
It is clear that $A_{reg}$ is open in $A$. We will prove that 
\begin{equation}\label{husto}
\text{$A_{reg}$ is dense in $A$}
\end{equation}
 by
 induction on $k\coloneqq  \dim A$. The case $k=0$ is trivial by Corollary \ref{struclok}. Now suppose that a $1\leq k \leq d$
 and \eqref{husto} holds for all $A$ with $0\leq \dim A < k$. Let now $M$ and $A$ such that
 $\dim A =k$ be given.     
Set $\tilde A\coloneqq   A \setminus \overline{ A^{[k]}}$. Then $\tilde A$ is a   relatively open subset 
 both of $A$ and of $M$.  If $\tilde A = \varnothing$, then \eqref{husto} clearly holds.
 If $\tilde A \neq \varnothing$, then $\dim \tilde A < k$ by Corollary \ref{struclok}~(i). 
  So, applying the induction assumption
 to $\tilde A$, we easily obtain \eqref{husto}.
\end{proof}

\begin{remark}\label{slstr2}
 Let $M \subset\R^d$ be  a closed locally WDC set.
By  Corollary \ref{struclok}, the set  $M \setminus M_{reg}$ (which is nowhere dense in $M$ by Proposition \ref{slstr1}) has localy finite
$(\dim M-1)$-dimensional Hausdorff measure. So, if $M$ is compact, $M \setminus M_{reg}$ has 
 finite
$(\dim M-1)$-dimensional Hausdorff measure.
\end{remark}
\begin{corollary}\label{topman}
Let $M \subset \R^d$ be a closed locally WDC set which is a $k$-dimensional topological manifold ($1\leq k <d$).
 Then there exists a closed set $N\subset M$ which is nowhere dense in $M$ such that $M \setminus N$
 is a $k$-dimensional DC manifold.
\end{corollary}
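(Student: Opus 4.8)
The plan is to set $N \coloneqq M \setminus M^{[k]}$ and to check the three required features: that $M \setminus N = M^{[k]}$ is a $k$-dimensional DC manifold, that $N$ is closed in $M$, and that $N$ is nowhere dense in $M$. The first two are immediate from the properties recorded right after Definition~\ref{ak}: $M^{[k]}$ is open in $M$, so $N$ is closed in $M$; and once we know $M^{[k]} \neq \varnothing$, the same remark guarantees that $M^{[k]}$ is a DC manifold of dimension $k$. Thus the entire content is reduced to showing that $M^{[k]}$ is dense in $M$, since a closed set whose complement is dense has empty interior and is therefore nowhere dense, while density also furnishes the nonemptiness needed above (take the relatively open neighbourhood $M$ itself).

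To establish density I would fix an arbitrary nonempty relatively open $U \subset M$ and prove $U \cap M^{[k]} \neq \varnothing$. The crucial preliminary observation is that for such $U$ the intrinsically defined set $U^{[k]}$ (in the sense of Definition~\ref{ak}, with $U$ in place of $M$) coincides with $U \cap M^{[k]}$: given $x \in U$, any small enough neighbourhood $V$ satisfies $U \cap V = M \cap V$, so $U$ locally coincides with a DC surface at $x$ precisely when $M$ does. It therefore suffices to prove $U^{[k]} \neq \varnothing$, and this I would obtain by applying Corollary~\ref{struclok}~(i) to the relatively open subset $A = U$ of the closed locally WDC set $M$. The only hypothesis to verify is that $U$ has topological dimension $k$; but $U$ is a nonempty open subset of the $k$-dimensional topological manifold $M$, hence itself a $k$-dimensional topological manifold, so $\dim_{top} U = k \geq 1$ and Corollary~\ref{struclok}~(i) yields $U^{[k]} \neq \varnothing$. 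Combined with the identification above, $U \cap M^{[k]} \neq \varnothing$, so $M^{[k]}$ meets every nonempty relatively open subset of $M$ and is dense.

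I do not anticipate a genuine difficulty here; the statement is essentially a repackaging of Corollary~\ref{struclok}. The two points deserving care are the identification $U^{[k]} = U \cap M^{[k]}$ for relatively open $U$, which is what lets the intrinsic conclusion of Corollary~\ref{struclok} be read as a statement about the ambient set $M^{[k]}$, and the elementary topological fact that every nonempty open subset of a $k$-manifold has topological dimension exactly $k$, which is needed to feed the hypothesis of Corollary~\ref{struclok}.
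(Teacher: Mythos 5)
Your proof is correct and takes essentially the same route as the paper: both set $N\coloneqq M\setminus M^{[k]}$ and reduce everything to the density of $M^{[k]}$ in $M$, which rests on Corollary~\ref{struclok}. The only cosmetic difference is that the paper cites Proposition~\ref{slstr1} together with the observation that $M^{[l]}=\varnothing$ for $l\neq k$, whereas you apply Corollary~\ref{struclok}~(i) directly to each nonempty relatively open $U\subset M$ (using that such $U$ has topological dimension $k$ and that $U^{[k]}=U\cap M^{[k]}$), which amounts to the same underlying argument.
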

\begin{proof}
Since $A^{[l]}= \varnothing$ for each $l \neq k$, the assertion follows from
 Proposition \ref{slstr1}. 
\end{proof}

\section{WDC sets in plane}  \label{plane}
In this section we will call a $1$-dimensional DC surface in $\er^2$ a {\it DC graph}.
Under a rotation (in $\er^2$) we always understand a rotation around the origin.

\begin{remark}\label{dcgr}
	Let $P\subset \R^2$ be a DC graph of the form $P = \{w+ \vf(w):\ w \in W\}$, where $W \in G(2,1)$
	and $\vf: W \to W^{\perp}$ is a Lipschitz DC mapping. Let $a=c + \vf(c) \in P$.
	Then
	\begin{enumerate}
		\item[(i)]
		$\Tan(P,a) \cap S^1  $ is a two point set,
		\item[(ii)]
		$\Tan(P,a)$ is a $1$-dimensional space iff $\vf'(c)$ exists, and
		\item[(iii)]
		there exist DC graphs $P_1, P_2 \subset \R^2$ such that $P \subset P_1 \cup P_2$, $a \in P_1\cap P_2 $
		and $\Tan(P_i,a)$ is a $1$-dimensional space, $i=1,2$.
	\end{enumerate}
	
	Indeed, without any loss of generality we can suppose that $W$ is the $x$-axis and $a=0$. Then
	$P$ is the graph of the function $\psi: \R \to \R$, $\psi(t)\coloneqq  \vf((t,0)) \cdot (0,1)$.
	Since $\psi'_{\pm}(0)$ exist by  Lemma \ref{strict}   and $\vf'(0)$ exists iff $\psi'(0)$ exists, (i) and (ii) follow 
	from well-known elementary facts.       
	
	To prove (iii), it is sufficient to define $\psi_1$ and $\psi_2$ as the odd functions, which
	coincide with $\psi$ on $[0,\infty)$ and $(-\infty,0]$, respectively, set 
	$P_1\coloneqq  \graph \psi_1$, $P_2\coloneqq  \graph \psi_2$, and use (ii) and Lemma \ref{vldc}~(iv), (vi), (ix).
\end{remark}

Further, in this section, we will say that a function $f$ defined on a set $D \subset \er^d$ is a {\it DCR function} if it is a restriction
of a DC function  defined on $\R^d$. 

Lemma \ref{vldc}~(viii) implies that
\begin{equation}\label{DCRint}
\text{if $g$ is DC on $(a,b)$ and $[c,d] \subset (a,b)$, then $g|_{[c,d]}$ is DCR.}
\end{equation}

If $z\in \R^2$ and  $v\in S^1$ we denote by   $\gamma_{z,v}$ the unique orientation preserving isometry on $\er^2$ that maps $0$ to $z$ and $(1,0)$ to $z+v$.

Further, for $u>0$, $s\in(0,\infty]$, $z\in\er^2$ and $v\in S^1$, we define
\begin{equation*}
A_s^u\coloneqq  \left\{(x,y): 0\leq x< s, -xu\leq y\leq xu \right\}\ \ \text{and}\ \ A_s^u(z,v)\coloneqq \gamma_{z,v}(A_s^u).
\end{equation*}

For $v,w \in S^1$ (possibly $v=w$), we denote by $\arc(v,w)$ the open arc on $S^1$ whose closure is the image of a simple (or closed simple) counter-clockwise orientated curve starting at $v$ and terminating at $w$. More generally, if $r>0$ and $p,q \in \partial B(0,r)$, we denote   the corresponding  arc on   $ \partial B(0,r)$
by  $\arc^r(p,q)\coloneqq  r\cdot\arc(\frac pr, \frac qr)  $.

For $B\subset \er^2$ and $t \in \R$, we set  $B_{t}\coloneqq  \{y\in \R:\ (t,y) \in B\}$.    
We also define $\Pi_1:\er^2\to\er$ by $\Pi_1(x,y)=x$.

If $K\subset \er$ and $f:K\to\er$ is a function, then $\hyp f$ and $\epi f$ will be used for hypograph and epigraph of $f$, respectively;
\begin{equation*}
\hyp f\coloneqq \{(x,y)\in\er^2: x\in K,\ y\leq f(x)\},\ \ \ \epi f\coloneqq \{(x,y)\in\er^2: x\in K, \ y\geq f(x)\}.
\end{equation*}
Similarly we define strict hypograph and strict epigraph of $f$ by
\begin{equation*}
\shyp f\coloneqq \{(x,y)\in\er^2: x\in K, \ y< f(x)\},\ \ \ \sepi f\coloneqq \{(x,y)\in\er^2: x\in K,\;y> f(x)\}.
\end{equation*}

We will need the following lemmas.

\begin{lemma}\label{mondist}
	Let $\delta>0$ and let  $f$ be a DC function on $(- \delta, \delta)$ with $f(0)=0$. Then there exists $\omega \in (0,\delta)$
	such that the function $R(x)\coloneqq  \sqrt {x^2 + f^2(x)}$ is strictly increasing on $[0,\omega)$ and 
	strictly decreasing on $(-\omega,0]$.
\end{lemma}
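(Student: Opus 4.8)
The plan is to work with the square $\rho(x) \coloneqq R(x)^2 = x^2 + f(x)^2$ rather than with $R$ itself, because $R$ fails to be differentiable at $0$ (where $R(0)=0$) whereas $\rho$ is as regular as $f$; since $t\mapsto\sqrt t$ is strictly increasing on $[0,\infty)$, the function $R$ is strictly monotone on a one-sided interval exactly when $\rho$ is. First I would record that $\rho$ is DC on $(-\delta,\delta)$: the function $f$ is DC, hence locally DC, and since $u\mapsto u^2$ is $C^{1,1}$ and thus locally DC by Lemma~\ref{vldc}~(iii), the superposition Lemma~\ref{vldc}~(vi) shows $f^2$ is locally DC, so $\rho=x^2+f^2$ is DC by Lemma~\ref{vldc}~(i),~(iv). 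In particular, by Lemma~\ref{vldc}~(v), $\rho$ is locally Lipschitz, hence differentiable almost everywhere and absolutely continuous on compact subintervals of $(-\delta,\delta)$.

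Next I would compute the one-sided derivatives of $\rho$ near $0$. Since $u\mapsto u^2$ is differentiable and $f$ is continuous with one-sided derivatives (Lemma~\ref{strict}), the chain rule for one-sided derivatives gives, for every $t$,
\[
\rho'_\pm(t) = 2t + 2f(t)\,f'_\pm(t).
\]
For $t>0$ I would factor this as $\rho'_+(t) = 2t\bigl(1 + \tfrac{f(t)}{t}f'_+(t)\bigr)$. Using $f(0)=0$ we have $\tfrac{f(t)}{t}\to f'_+(0)$ as $t\to 0+$, and Lemma~\ref{strict} gives $f'_+(t)\to f'_+(0)$; hence the bracket tends to $1+(f'_+(0))^2\ge 1$. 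Thus there is $\omega\in(0,\delta)$ with $1+\tfrac{f(t)}{t}f'_+(t)>0$, and therefore $\rho'_+(t)>0$, for all $t\in(0,\omega)$. The same factorization for $t<0$, now using $\lim_{t\to 0-}f'_-(t)=f'_-(0)$ and $\tfrac{f(t)}{t}\to f'_-(0)$, shows (after possibly shrinking $\omega$) that $1+\tfrac{f(t)}{t}f'_-(t)>0$ while $2t<0$, so $\rho'_-(t)<0$ for all $t\in(-\omega,0)$.

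Finally I would upgrade these one-sided sign conditions to strict monotonicity of $\rho$. For the open interval I use absolute continuity: at almost every $t\in(0,\omega)$ the derivative $\rho'(t)$ exists and equals $\rho'_+(t)>0$, so for $0<x_1<x_2<\omega$ we get $\rho(x_2)-\rho(x_1)=\int_{x_1}^{x_2}\rho'(t)\,dt>0$; the endpoint case $x_1=0$ is immediate from $\rho(x_2)\ge x_2^2>0=\rho(0)$. Hence $\rho$ is strictly increasing on $[0,\omega)$, and symmetrically strictly decreasing on $(-\omega,0]$, and taking square roots yields the claim for $R$. The only genuinely delicate point is this last passage from a \emph{punctured} sign condition on $\rho'_+$ (note that $\rho'_+(0)=0$, consistent with $0$ being the minimum of $\rho$) to strict monotonicity including the endpoint: the absolute-continuity argument handles the open interval, while the trivial observation $\rho(x)>\rho(0)$ for $x\ne 0$ handles the point $0$ itself.
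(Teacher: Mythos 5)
Your proof is correct and takes essentially the same route as the paper's: the paper computes $R'_+(x)=\frac{x}{R(x)}\bigl(1+\frac{f(x)}{x}f'_+(x)\bigr)$ directly for $x>0$ and uses Lemma~\ref{strict} to see that the bracket tends to $1+(f'_+(0))^2>0$, which is exactly your computation for $\rho=R^2$ up to the harmless positive factor $2R(x)$. Working with the square and the absolute-continuity/integration step are minor variants of the paper's one-line conclusion that a continuous function with positive right derivative on $(0,\omega)$ is strictly increasing on $[0,\omega)$ (with the symmetric argument via $x\mapsto f(-x)$ on the left); both are fine.
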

\begin{proof}
	By Lemma \ref{strict}, for each $0<x<\delta$, there exists
	$ R'_+(x) =  \frac{x}{R(x)} (1 + \frac{f(x)}{x} f'_+(x))$ and $\lim_{x\to 0+} (1 + \frac{f(x)}{x} f'_+(x)) =
	1 +   (f'_+(0))^2 >0$.  Consequently the (continuous) function $R$ is strictly increasing on some $[0,\omega)$. Considering the function $g(x)\coloneqq  f(-x)$, we easily obtain the rest of the lemma.
\end{proof}

\begin{lemma}\label{zestr}
	Let $P$ be a DC graph in $\R^2$ and $0 \in P$.
	Suppose that $\Tan(P,0)$ is a $1$-dimensional space and $(0,1) \notin \Tan(P,0)$. Then there exist
	$\rho^*>0$ such that, for each $0< \rho< \rho^*$, there exist $\alpha<0 < \beta$ and a DCR function $f$ on $(\alpha, \beta)$ such that
	$P \cap B(0, \rho) = \graph  f|_{(\alpha, \beta)}$.
\end{lemma}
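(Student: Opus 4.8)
The plan is to parametrise $P$ by its defining line, reparametrise the resulting curve as a graph over the $x$-axis, and then cut out $P\cap B(0,\rho)$ using the radial monotonicity provided by Lemma~\ref{mondist}. Write $P=\{w+\vf(w):w\in W\}$ with $W\in G(2,1)$ and $\vf:W\to W^\perp$ Lipschitz DC, fix a unit vector $e\in W$, and put $\gamma(t)\coloneqq te+\vf(te)$ for $t\in\R$, with coordinate functions $\gamma=(\gamma_1,\gamma_2)$. By Lemma~\ref{vldc}~(iii),(vi) the curve $\gamma$ is DC, and since $P$ is the graph of the Lipschitz map $\vf$ over $W$, the map $\gamma\colon\R\to P$ is a homeomorphism; it is proper because $|\gamma(t)|\ge|t|$. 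From $0\in P$ we get $\vf(0)=0$ and hence $\gamma(0)=0$. As $\Tan(P,0)$ is $1$-dimensional, $\vf'(0)$ exists (Remark~\ref{dcgr}~(ii)), so $\gamma'(0)$ exists and spans $\Tan(P,0)$; the assumption $(0,1)\notin\Tan(P,0)$ then gives $a\coloneqq\gamma_1'(0)\neq0$, and after possibly replacing $e$ by $-e$ we may assume $a>0$.

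First I would show that $\gamma_1$ is strictly increasing near $0$. By Lemma~\ref{strict} the one-sided derivatives of $\gamma_1$ exist everywhere and satisfy $\lim_{t\to0}(\gamma_1)'_\pm(t)=a$, so there is $\tau_0>0$ with $(\gamma_1)'_\pm>a/2$ on $(-\tau_0,\tau_0)$; since $\gamma_1$ is locally Lipschitz (hence absolutely continuous) this yields $\gamma_1(t_2)-\gamma_1(t_1)\ge\tfrac a2(t_2-t_1)$ for $-\tau_0<t_1<t_2<\tau_0$. Thus $\gamma_1$ maps $(-\tau_0,\tau_0)$ bilipschitzly onto an open interval $I\ni0$, its inverse is locally DC by Lemma~\ref{vldc}~(vii), and hence $f\coloneqq\gamma_2\circ\gamma_1^{-1}$ is locally DC on $I$ with $f(0)=0$ by Lemma~\ref{vldc}~(vi). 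Moreover $\gamma((-\tau_0,\tau_0))=\{(x,f(x)):x\in I\}$, so $P$ agrees with this graph over the $x$-axis near $0$.

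Finally I would localise using Lemma~\ref{mondist}. Pick $\delta>0$ with $[-\delta,\delta]\subset I$ and $f$ DC on $(-\delta,\delta)$; Lemma~\ref{mondist} gives $\omega\in(0,\delta)$ such that $R(x)\coloneqq\sqrt{x^2+f(x)^2}=|(x,f(x))|$ is strictly increasing on $[0,\omega)$ and strictly decreasing on $(-\omega,0]$. Set $J\coloneqq\gamma_1^{-1}((-\omega,\omega))$; then $\gamma(\R\setminus J)$ is closed (properness of $\gamma$) and avoids $0$, so $\rho_1\coloneqq\dist(0,\gamma(\R\setminus J))>0$, and I would put $\rho^*\coloneqq\min(R(\omega),R(-\omega),\rho_1)$. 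For $0<\rho<\rho^*$ the monotonicity of $R$ produces unique $\alpha\in(-\omega,0)$ and $\beta\in(0,\omega)$ with $R(\alpha)=R(\beta)=\rho$ and, for $x\in(-\omega,\omega)$, the equivalence $(x,f(x))\in B(0,\rho)\iff x\in(\alpha,\beta)$; since $\rho<\rho_1$ forces $P\cap B(0,\rho)\subset\gamma(J)=\{(x,f(x)):x\in(-\omega,\omega)\}$, we conclude $P\cap B(0,\rho)=\graph f|_{(\alpha,\beta)}$. Because $[\alpha,\beta]\subset(-\omega,\omega)$ and $f$ is DC there, \eqref{DCRint} shows that $f|_{(\alpha,\beta)}$ is DCR, which finishes the argument.

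The step I expect to be most delicate is showing that $P\cap B(0,\rho)$ is \emph{exactly} the graph over a single interval. This requires combining the purely local reparametrisation over the $x$-axis with \emph{both} the global ``no return'' estimate (the properness of $\gamma$, i.e.\ the fact that $P$ is a genuine graph over $W$, which keeps far-away parts of $P$ out of a small ball) and the radial monotonicity of Lemma~\ref{mondist} (which guarantees the ball meets the local graph in a connected sub-arc); neither ingredient suffices on its own.
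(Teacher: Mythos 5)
Your proposal is correct and follows essentially the same route as the paper's proof: parametrize $P$ over the line $W$, use Remark~\ref{dcgr}~(ii) and Lemma~\ref{strict} to see that the first coordinate of the parametrization is strictly increasing near $0$, invoke Lemma~\ref{vldc}~(vi), (vii) to write $P$ locally as the graph of a DC function over the $x$-axis, and then cut out the ball using Lemma~\ref{mondist} together with the fact that the faraway part of $P$ stays away from the origin (your properness of $\gamma$ is the paper's ``$0\notin\overline{P\setminus P^*}$''). The only difference is that you spell out in detail the final localization step, which the paper dismisses as ``easy to show''.
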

\begin{proof}
	Let $P= \{z+ \vf(z):\ z \in W\}$, where  $W \in G(2,1)$ and $\vf: W \to  W^{\perp}$ is a Lipschitz DC mapping.
	Choose $w \in W\cap S^1$ and set $\omega(t)= (\omega_1(t), \omega_2(t)): = t w + \vf (t w),\ t \in \R$.
	Then $\omega$ is a Lipschitz DC mapping, $P= \omega(\R)$ and $\omega_1= \Pi_1 \circ \omega$. By the assumptions and Remark \ref{dcgr}~(ii), 
	$\vf'(0)$ and consequenly also $\omega'(0)$ exist and $\omega_1'(0) \neq 0$.  We can (and do)
	suppose that $\omega_1'(0) > 0$ (since otherwise we can consider $\tilde w= -w$  instead of $w$). 
	Lemma \ref{strict} implies that there exists $c> 0$ such that 
	$(\omega_1)'_+(t) > c$ for each $t \in (-c,c)$. Consequently 
	$\omega_1(v)- \omega_1(u)= \int_u^v \omega_1'(t) dt \geq c (v-u)$ whenever $-c <u <v <c$, which easily implies that $(\omega_1)^{-1}$
	exists and is $(1/c)$-Lipschitz on its domain $(\gamma, \delta)$ with $\gamma < 0 < \delta$.
	Setting $P^*\coloneqq  \omega((-c,c))$, we obtain that
	$(\Pi_1|_{P^*})^{-1} = \omega \circ   \omega_1^{-1}$ is Lipschitz. Therefore
	$P^*$ is a graph of a Lipschitz function $g: (\gamma, \delta) \to \R$, which is 
	DC by Lemma \ref{vldc}~(iv), (v), (vi), (vii).
	Now, using the obvious fact that $0 \notin  \overline{P \setminus P^*}$ and applying
		Lemma \ref{mondist} (to $g$) together with \eqref{DCRint}, it is easy to show the existence
		 of $\rho^*$ such that, for each  $0< \rho< \rho^*$, corresponding $\alpha$, $\beta$ and $f$ ($f= g|_{(\alpha,\beta)}$) exist.
\end{proof}

\begin{definition} \label{types}
	Let $M \subset \R^2$ and $r, u>0$. We say that
	\begin{enumerate}
		\item
		$M$ is a {\it $\tilde T_{r,u}^1$-set} if $M\cap A_{r}^{2u}=\{0\}$.
		\item
		$M$ is a {\it $\tilde T_{r,u}^2$-set} if   $M\supset A_{r}^{2u}$.
		\item
		$M$ is a  {\it $\tilde T_{r,u}^3$-set} if there is a DCR function $U:[0,r)\to\er$ such that $U'_{+}(0)=0$, $\graph U\subset A_{r}^{u}$ and 
		$M\cap A_{r}^{2u}=\hyp U\cap A_{r}^{2u}$.
		\item
		$M$ is a  {\it $\tilde T_{r,u}^4$-set} if  there is a DCR function $L:[0,r)\to\er$ such that $L'_{+}(0)=0$, $\graph L\subset A_{r}^{u}$ and 
		$	M\cap A_{r}^{2u}=\epi L\cap A_{r}^{2u}$.
		\item
		$M$ is a  {\it $\tilde T_{r,u}^5$-set} if there are DCR functions $U,L:[0,r)\to\er$ such that $L\leq U$ on $[0,r]$, $U'_{+}(0)=L'_{+}(0)=0$,
		$\graph U,\graph L\subset A_{r}^{u}$, and 
		$M\cap A_{r}^{2u}=\hyp U\cap\epi L$.
		\item
		$M$ is of type $T^i$ ($i=1,2,3,4,5$) at $x\in M$ in direction $v \in S^1$ if the preimage $(\gamma_{x,v})^{-1}(M)$ is a
		$\tilde T_{r,u}^1$-set for some $r,u>0$.
	\end{enumerate}
\end{definition}

\begin{remark}\label{otypech}
	\begin{enumerate}
		\item
		Clearly, if $M$ is a $\tilde T_{r,u}^i$-set (resp. of  type $T^i$  at $x$ in direction $v$), then $i$ is uniquely determined.
		\item
		If $M$ is a $\tilde T_{r,u}^i$-set and $\delta>0$, then clearly  $M$ is a $\tilde T_{r^*,u^*}^i$-set for some $r^*<\delta$, $u^* < \delta$ (note that $U'_{+}(0)=L'_{+}(0)=0$ in Definition \ref{types}). 
	\end{enumerate}
\end{remark}
\begin{remark}\label{otypech2}
	For $M \subset \R^2$ with $0 \in M$ and $1\leq i \leq 5$, set
	\begin{equation}\label{vei}
	\text{$V_i\coloneqq  \{v \in S^1:\ 	M$\  is of type\  $T^i$ \  at\ $0$\ in direction\ $v \in S^1\}$.}
	\end{equation}
	Then clearly
	\begin{enumerate}
		\item 
		$V_1$ and $V_2$ are open in $S^1$.
		\item
		If $K \subset V_1$ (resp.  $K \subset V_2$) is compact, then the ``covering definition of compactness'' easily gives that there is $\rho>0$ such that
		$K_{\rho}\coloneqq  \{tv:\ v \in K,\ 0<t<\rho\} \cap M = \varnothing$ (resp.  $K_{\rho} \subset M$). 
	\end{enumerate}
\end{remark}

\begin{lemma}\label{L:connecting2}
	Let $M\neq\varnothing$ be a compact WDC set in $\er^2$, $u,s,t>0$, $N\in\en$. Let $\eps=\eps_M$ be as in Notation~\ref{fixaury}.
	Suppose that $\partial M\cap A_s^u$ is covered by the graphs of $\frac{\ep}{12\, N}$-Lipschitz functions $f_1,\dots,f_N:\er\to\er$ and that $\partial M\cap A_s^{2u}\subset A_s^u$.
	Suppose that
	\begin{equation}\label{fourineq}
	\quad 2su<\eps,\quad \frac{12u}{\eps}<1\quad\text{and}\quad t\left(1+\frac{12u}{\eps}\right)<s.
	\end{equation}
	Then $(M\cap A_t^{2u})_z$ is connected for every $0\leq z< t$.
\end{lemma}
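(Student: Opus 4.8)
The plan is to argue by contradiction, using the two ``connecting'' lemmas already available. Observe first that for $z=0$ the slice $(M\cap A_t^{2u})_0$ is contained in the single point $\{0\}$ and is trivially connected, so I may assume $0<z<t$. Since $(M\cap A_t^{2u})_z=M_z\cap[-2uz,2uz]$ and $\{z\}\times[-2uz,2uz]\subset A_s^{2u}$ (because $z<s$), the hypothesis $\partial M\cap A_s^{2u}\subset A_s^u$ forces every point of $\partial M$ on this slice to satisfy $|y|\le uz$; hence membership in $M$ along the slice can change only inside $\{z\}\times[-uz,uz]$. Assuming the slice is disconnected, I would choose $y_1<y_0<y_2$ in $[-2uz,2uz]$ with $(z,y_1),(z,y_2)\in M$ but $(z,y_0)\notin M$, and set $a\coloneqq\sup\{y\le y_0:(z,y)\in M\}$ and $b\coloneqq\inf\{y\ge y_0:(z,y)\in M\}$. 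Then $(z,a),(z,b)\in\partial M$, the open segment $\{z\}\times(a,b)$ misses $M$, and $a,b\in[-uz,uz]$, so $\delta\coloneqq b-a\in(0,2uz]$.

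Next I would feed the straight segment $\vf(\lambda)\coloneqq(z,a+\lambda(b-a))$, which joins $(z,a)$ to $(z,b)$ through $\R^2\setminus M$ with $\diam\vf([0,1])=\delta$, into Lemma~\ref{L:connecting}. Since $\delta\le 2uz<2su<\eps$ (using $z<s$ and $2su<\eps$), this is legitimate and produces a continuous $\kappa\colon[0,1]\to\partial M$ with $\kappa(0)=(z,a)$, $\kappa(1)=(z,b)$ and $\diam\kappa([0,1])<6\delta/\eps$. The crucial step, which I expect to be the only delicate one, is to show that $\kappa$ remains in the covered region $A_s^u$. First, $\partial M$ contains no point on a slanted edge $\{|y|=2ux,\ 0<x<s\}$, since such a point would lie in $\partial M\cap(A_s^{2u}\setminus A_s^u)=\varnothing$. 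Second, the bound $\diam\kappa([0,1])<6\delta/\eps\le 12uz/\eps$ confines the first coordinate of $\kappa$ to $\left(z(1-\tfrac{12u}{\eps}),\,z(1+\tfrac{12u}{\eps})\right)$, which is contained in $(0,s)$: the lower endpoint is positive by $\tfrac{12u}{\eps}<1$, and the upper endpoint is $<t(1+\tfrac{12u}{\eps})<s$ by $z<t$ together with the third inequality in \eqref{fourineq}. As $\kappa$ is connected, starts in the open cone $\{|y|<2ux\}$, and cannot cross the slanted edges, it stays in that open cone, so $\kappa([0,1])\subset\partial M\cap A_s^{2u}\subset A_s^u\subset\bigcup_{i=1}^N\graph f_i$.

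Finally I would invoke Lemma~\ref{71h} with $W$ the $x$-axis, $D=\R$, the $\tfrac{\eps}{12N}$-Lipschitz functions $f_1,\dots,f_N$, base point $z$ and heights $a,b$, obtaining $\diam\kappa([0,1])\ge\frac{|b-a|}{N\cdot(\eps/12N)}=\frac{12\delta}{\eps}$. Combined with the upper estimate $\diam\kappa([0,1])<\frac{6\delta}{\eps}$ from Lemma~\ref{L:connecting}, this yields $\frac{12\delta}{\eps}<\frac{6\delta}{\eps}$, which is impossible since $\delta>0$. This contradiction establishes that every slice $(M\cap A_t^{2u})_z$ is connected. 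The three conditions in \eqref{fourineq} enter exactly as indicated: $2su<\eps$ makes Lemma~\ref{L:connecting} applicable, $\tfrac{12u}{\eps}<1$ keeps the first coordinate positive, and $t(1+\tfrac{12u}{\eps})<s$ bounds it above; the whole argument hinges on the $6$-versus-$12$ gap between the two diameter estimates, and the main obstacle is genuinely the containment $\kappa([0,1])\subset A_s^u$ rather than any of the constant bookkeeping.
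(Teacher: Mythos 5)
Your proposal is correct and follows essentially the same route as the paper: contradiction via a vertical segment between two consecutive boundary points on the slice, Lemma~\ref{L:connecting} to get a curve in $\partial M$ of diameter $<6\delta/\eps$, confinement of that curve to $A_s^u$ using \eqref{fourineq}, and Lemma~\ref{71h} to force diameter $\geq 12\delta/\eps$. Your justification that $\kappa$ stays inside the cone (via the slanted edges lying in $A_s^{2u}\setminus A_s^u$ and a connectedness argument) is in fact more detailed than the paper's, which leaves that step as ``easily implies.''
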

\begin{proof}
	Suppose for contradiction that $(M\cap A_t^{2u})_z$ is disconnected for some $0\leq z< t$.
	This implies that there are 
	\begin{equation*}
	p,q\in\partial M\cap \left(\{z\}\times (A_t^{2u})_z\right),
	\end{equation*} $p\not=q$, such that $\overline{pq}\cap M=\{p,q\}$.
	Consider $\phi:[0,1]\to \overline{pq}$ defined as $\phi(\alpha)=(1-\alpha)q+\alpha p$.
	By Lemma~\ref{L:connecting} with $\delta=|p-q|=\diam(\phi([0,1]))$ (note that $|p-q|\leq 2zu< 2tu< 2su<\eps$ by the first and third inequality in \eqref{fourineq} together with the inclusion $\partial M\cap A_s^{2u}\subset A_s^u$) there is a mapping 
	$\kappa:[0,1]\to\partial M$ such that $\kappa(0)=q$, $\kappa(1)=p$ and
	\begin{equation}\label{eq:diamUpperBound}
	\diam \kappa([0,1])\leq \frac{6\delta}{\eps}=\frac{6|p-q|}{\eps}\leq \frac{12uz}{\eps}.
	\end{equation}
	Since $\frac{12uz}{\eps}<z$ by the second inequality in \eqref{fourineq} and $z+z\frac{12u}{\eps}< t(1+\frac{12u}{\eps})<s$ by the third inequality in \eqref{fourineq},
	we infer from \eqref{eq:diamUpperBound} that
	\begin{equation*}
	\Pi_1(\kappa([0,1]))\subset \left[z-\frac{12uz}{\eps},z+\frac{12uz}{\eps}\right]\subset\left(0,s\right).
	\end{equation*}
	This easily implies (using $\partial M\cap A_s^{2u}\subset A_s^u$) that $\kappa([0,1])\subset A_s^u$.
	However, by the assumptions of the lemma together with Lemma~\ref{71h} (used with $|z_1-z_0|= \delta$ and $\omega: = \frac{\ep}{12\, N}$), we obtain $\diam(\kappa([0,1])) \geq \frac{\delta}{N \omega} = \frac{12\delta}{\eps}$, which contradicts  \eqref{eq:diamUpperBound}.
\end{proof}

\begin{lemma}\label{L:locpie}
	Let $M$ be a closed locally WDC set in $\er^2$, $x\in\partial M$ and $v\in S^1$.
	Then there exists $1\leq i \leq 5$ such that $M$ is of type $T^i$ at $x$ in direction $v$.
\end{lemma}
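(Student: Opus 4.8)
The plan is to reduce everything to a short, thin sector around a fixed direction and then read off the type from the way $M$ fills that sector. First I would apply the isometry $\gamma_{x,v}^{-1}$ to reduce to the case $x=0$, $v=(1,0)$, so that I must decide the type of $M':=(\gamma_{x,v})^{-1}(M)$ at $0$ in the direction $e_1=(1,0)$; since WDC is invariant under rigid motions and the statement is purely local, Proposition~\ref{P_local} lets me replace $M'$ near $0$ by a compact WDC set $M^*$ agreeing with $M'$ on a neighbourhood of $0$, and I fix $f=f_{M^*}$, $\ep=\ep_{M^*}$, $H$ as in Notation~\ref{fixaury}. By Proposition~\ref{wdcpokr} the boundary $\partial M^*$ is, near $0$, covered by finitely many DC graphs $P_1,\dots,P_N$; discarding those missing $0$ (they avoid a ball around $0$) I may assume each $P_i\ni 0$, and I set $D:=\bigcup_i(\Tan(P_i,0)\cap S^1)$, a finite set of directions.

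Next I would make the sector ``clean''. Using that the directions of $P_i\cap(B(0,\rho)\setminus\{0\})$ accumulate only on $\Tan(P_i,0)\cap S^1$, I first choose $u>0$ so small that the only element of $D$ within angle $\arctan(2u)$ of $e_1$ is (possibly) $e_1$ itself, and then $s>0$ so small that every $P_i$ whose tangent directions avoid that cone satisfies $P_i\cap A_s^{2u}=\{0\}$. Hence only graphs tangent to $e_1$ at $0$ meet $A_s^{2u}\setminus\{0\}$; for each such $P_i$, Remark~\ref{dcgr}~(iii) together with Lemma~\ref{zestr} represents its $e_1$-branch near $0$ as $\graph g_j$ for a DCR function $g_j$ with $g_j(0)=0$ and $(g_j)'_+(0)=0$. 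Shrinking $s$ once more I arrange $\graph g_j\subset A_s^u$ and $\lip(g_j|_{[0,s)})<\ep/(12m)$ (here $m$ is the number of such branches), using Lemma~\ref{strict} for the second bound. Thus $\partial M^*\cap A_s^{2u}\subset\bigcup_j\graph g_j\subset A_s^u$, so the hypotheses of Lemma~\ref{L:connecting2} hold; applying it with a suitably small $t<s$ (so that \eqref{fourineq} holds) gives that every vertical slice $(M^*\cap A_t^{2u})_z$, $0\le z<t$, is connected, hence a (possibly empty or degenerate) interval $[L(z),U(z)]$.

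With connected slices in hand I would read off the type. The two ``edge strips'' $S^{\pm}$ of $A_t^{2u}$ lying between the graphs $g_j$ (all confined to $A_t^u$) and the sector edges contain no point of $\partial M^*$, hence each, being connected, lies entirely inside or outside $M^*$. If both lie inside, connectedness of the slices forces $M^*\cap A_t^{2u}=A_t^{2u}$, i.e.\ type $T^2$; if $S^-$ is inside and $S^+$ outside we get $M^*\cap A_t^{2u}=\hyp U\cap A_t^{2u}$ (type $T^3$), and symmetrically type $T^4$; if both are outside then either the slices are empty for $z>0$ (type $T^1$) or $M^*\cap A_t^{2u}=\hyp U\cap\epi L$ (type $T^5$). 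In each case the boundary of a nondegenerate slice from inside the sector lies on some $\graph g_j$, so the relevant envelope satisfies $U(z)\in\{g_1(z),\dots,g_m(z)\}$ (resp.\ $L(z)\in\{g_1(z),\dots,g_m(z)\}$) whenever it is $<2zu$ (resp.\ $>-2zu$).

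The crux, and the step I expect to be hardest, is to upgrade these envelopes to DCR functions. I would first note that $U$ is upper and $L$ lower semicontinuous (because $M^*$ is closed and its complement open), and then rule out a downward jump of $U$: if $U(z_n)\to c<U(z_0)$ along some $z_n\to z_0$, the closedness of $M^*$ forces the vertical segment $\{z_0\}\times(\max(c,L(z_0)),U(z_0))$ into $\partial M^*$; a positive-length vertical segment in $\partial M^*\cap A_t^{2u}$ is impossible, since all of $\partial M^*$ there is covered by the single-valued, nearly horizontal graphs $g_j$ (the degenerate case $U(z_0)=L(z_0)$ being excluded instead by the semicontinuity of $L$ and the inequality $L\le U$). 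Hence $U$ and $L$ are continuous; being continuous selections of the finitely many DC functions $g_j$, they are DC by the mixing lemma Lemma~\ref{vldc}~(ix), hence DCR, and $|U(z)|\le\max_j|g_j(z)|=o(z)$ gives $U'_+(0)=0$ (likewise for $L$). This produces exactly the DCR functions required in Definition~\ref{types}, completing the identification of the type.
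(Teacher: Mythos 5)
Your proposal follows the paper's own proof of Lemma~\ref{L:locpie} essentially step for step: reduce to $x=0$, $v=(1,0)$ and a compact WDC set, cover $\partial M$ by finitely many DC graphs, keep only the branches tangent to $(1,0)$ and write them as graphs of DCR functions with small Lipschitz constant via Lemma~\ref{zestr}, Lemma~\ref{mondist} and Lemma~\ref{strict}, apply Lemma~\ref{L:connecting2} to get connected vertical slices, classify according to whether the two edge strips meet $M$, and upgrade the envelopes $U,L$ to DCR functions using semicontinuity, the impossibility of a vertical segment in $\partial M$, and the mixing lemma. All of that matches the paper.

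There is, however, one genuine gap, in the case where both edge strips lie outside $M$ (the paper's case~(a)). You assert the dichotomy ``either the slices are empty for $z>0$ (type $T^1$) or $M^*\cap A_t^{2u}=\hyp U\cap\epi L$ (type $T^5$)'', but a priori there is a third possibility: the set $\Pi_1(M^*\cap A_t^{2u})$ of abscissae with non-empty slice could be a proper closed subset of $[0,t)$ containing $0$ whose complement accumulates at $0$ (picture $\{0\}$ together with arcs lying over intervals $[a_n,b_n]$ with $b_n\to 0$). Then $U$ and $L$ are not defined on any interval $[0,r)$, and neither $T^1$ nor $T^5$ applies for any radius. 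In the other three cases this cannot happen, because an edge strip contained in $M$ forces every slice to be non-empty; but in case~(a) a separate argument is required. The paper supplies one by first reducing to $M$ compact and \emph{connected} (Corollary~\ref{Cor:propertiesOfRetracts}~(3) gives finitely many components, so $M$ agrees near $x$ with the component of $x$), and then using connectedness to conclude that $\Pi_1(M\cap A_t^{2u})$ has at most two components, one of which contains an interval $[0,2r]$ unless $M$ is a $\tilde T^1_{r,u}$-set: any piece of $M\cap A_t^{2u}$ not containing $0$ is confined to $A_t^{u}$ and so can only reach the rest of $M$ through the far end of the sector. You never invoke connectedness, so as written your case analysis is incomplete. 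A second, much smaller point: to obtain $U$ DCR on $[0,r)$ rather than merely DC on $(0,r)$, you should extend $U$ by $0$ to $[-2r,0)$ before applying Lemma~\ref{vldc}~(ix), as the paper does, since the mixing lemma needs an open convex domain containing $0$ and the relevant selection at the endpoint.
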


\begin{proof}
	We may assume (by Proposition~\ref{P_local} and Corollary~\ref{Cor:propertiesOfRetracts}) that $M$ is compact and connected.
	By Definition \ref{types} (and the fact that isometric images of locally WDC sets are locally WDC, cf.\ \cite[Theorem~7.5]{PR13}), it is sufficient to prove that if $0 \in \partial M$, then there exist $r,u>0$ and
	$1\leq i \leq 5$ such that $M$ is a $\tilde{T}_{r,u}^i$-set.
	Let $\eps_M$ be as in Notation~\ref{fixaury}.
	
	By Lemma~\ref{wdcpokr} $\partial M$ can be covered by finitely many DC graphs $P_1,\dots,P_n$.
	Put $I=\{i: 0 \in P_i  \}$.
	If $I = \varnothing$, then there clearly exist $r, u >0$ such that $\partial M\cap A_{r}^{2u}=\{0\}$
	and so $M$ is either a $\tilde{T}_{r,u}^1$-set or a a $\tilde{T}_{r,u}^2$-set.
	
	If $I \neq \varnothing$, then we can suppose that $I= \{1,\dots, N\}$. 
	Due to Remark~\ref{dcgr}~(iii) we can and will also suppose that $\Tan(P_i,0)$ is a $1$-dimensional linear subspace.
	
	Put $\tilde I=\{i: \Tan(P_i,0)=\spa \{(1,0)\} \}$.
	Again, if $\tilde I = \varnothing$, then there exist $r, u >0$ such that  $\partial M\cap A_{r}^{2u}=\{0\}$
	and so $M$ is either a $\tilde{T}_{r,u}^1$-set or a a $\tilde{T}_{r,u}^2$-set.
	If $\tilde I \neq \varnothing$, then we can suppose that $\tilde I= \{1,\dots, \tilde N\}$. 
	
	Using Lemma \ref{zestr} and Lemma \ref{mondist} we obtain that for each $1\leq i \leq \tilde N$ there exist 
	$u_i, s_i\in (0,\infty)$ and a DCR function $\vf_i$ on $[0, s_i)$ such that
	$P_i \cap A_{s_i}^{2u_i} = \graph \vf_i$. 
	Note that $(\vf_i)'_+(0) = 0$ and so, using Lemma \ref{strict}
	we  obtain 
	$s,u >0$  such that, denoting  $f_i \coloneqq  \vf_i |_{[0,s)}$, we have
	\begin{enumerate}[label={\rm (f\arabic*)}]
		\item\label{5} $(f_i)'_+(0) = 0$, $f_i(0)=0$ and $f_i$ is $\frac{\eps_M}{12\tilde N}$-Lipschitz and DCR for every $i$,
		\item\label{2} $\partial M\cap A_{s}^{u}$ is covered by  the graphs of $f_i$,
		\item\label{4} $\partial M\cap A_{s}^{2u}\subset A_{s}^{u}$,
		\item\label{1} $2su<\eps_M$ and $\frac{12u}{\eps_M}<1$.
	\end{enumerate}
	
	Using Lemma~\ref{L:connecting2} we obtain that
	\begin{equation}\label{sections}
	(M\cap A_t^{2u})_x \quad\text{is connected for every}\quad 0\leq x< t
	\end{equation} 
	provided $t$ satisfies $t(1+\frac{12u}{\eps})<s$.
	Pick any such $t$.
	
	Define functions $\tilde U,\tilde L:\Pi_1(M\cap A_t^{2u})\to\er$ by 
	$\tilde U(x)\coloneqq \max(M\cap A_t^{2u})_x$ and $\tilde L(x)\coloneqq \min(M\cap A_t^{2u})_x$, $x\in \Pi_1(M\cap A_t^{2u})$ (note that $(M\cap A_t^{2u})_x$ is always compact).
	Clearly $\tilde U\geq \tilde L$ on $\Pi_1(M\cap A_t^{2u})$.

	Put
	\begin{equation*}
	A^\pm\coloneqq  (A_t^{2u}\setminus A_{t}^{u})\cap \{(x,y):\pm y>0\}.
	\end{equation*}
	There are four possible cases
	\begin{enumerate}[label={ (\alph*)}]
		\item\label{a} $A^+\cap M=A^-\cap M=\varnothing$,
		\item\label{b} $A^+\cap M=\varnothing$ and $A^-\cap M\not=\varnothing$,
		\item\label{c} $A^+\cap M\not=\varnothing$ and $A^-\cap M=\varnothing$,
		\item\label{d} $A^+\cap M\not=\varnothing$ and $A^-\cap M\not=\varnothing$.
	\end{enumerate}
	
	Observe that by \ref{4} we have
	\begin{equation}\label{eq:bAndd}
	\tilde L(z)=-2uz,\; z\in[0,t),\quad\text{in cases \ref{b} and \ref{d}}
	\end{equation}
	and
	\begin{equation}\label{eq:cAndd}
	\tilde U(z)=2uz,\; z\in[0,t),\quad\text{in cases \ref{c} and \ref{d}}.
	\end{equation}
	Suppose that $M$ is not a $\tilde{T}_{r,u}^1$-set for any $r>0$.
	We then claim that
\begin{equation} \label{claim}	
\tilde U\text{ and }\tilde L\text{ are DCR on }[0,r)\text{ for some }r>0.
\end{equation}
	By \eqref{eq:bAndd} and \eqref{eq:cAndd} it is enough to prove \eqref{claim} for $\tilde U$ in cases \ref{a} and \ref{b} and for $\tilde L$ in cases \ref{a} and \ref{c}.
	
	Clearly
	\begin{equation}\label{eq:aAndb}
	\graph\tilde U\subset A_{t}^{u}\quad\text{in cases \ref{a} and \ref{b}}
	\end{equation}
	and
	\begin{equation}\label{eq:aAndc}
	\graph\tilde L\subset A_{t}^{u}\quad\text{in cases \ref{a} and \ref{c}}.
	\end{equation}
	
	We will consider only the function $\tilde U$ in cases \ref{a} and \ref{b}, 
	the case of  $\tilde L$ in cases \ref{a} and \ref{c} can be proven analogously.
	
	So suppose that either \ref{a} or \ref{b} holds.
	Then, using the fact that $M$ is connected, we obtain that $\Pi_1(M\cap A_t^{2u})$ has at most two connected components, 
	of which one contains an interval of the form $[0,2r]$ for some $\frac{t}{2}> r> 0$ (if not then $M$ would be a $\tilde{T}_{r,u}^1$-set).
	
	We prove that $\tilde U$ is continuous on $[0,2r]$.
	First note that $\tilde U$ is continuous at $0$ by \eqref{eq:aAndb}.
	Pick $x\in(0,2r]$.
	Since $M\cap A^{2u}_t$ is relatively closed in $A^{2u}_t$, we obtain that $\tilde U$ is upper semi-continuous at $x$, that is
	\begin{equation*}
	\limsup_{y\to x,\, y\in [0,2r]}\tilde U(y)\leq \tilde U(x).
	\end{equation*}
	It remains to prove that
	\begin{equation*}
	\liminf_{y\to x,\, y\in [0,2r]}\tilde U(y)\geq \tilde U(x).
	\end{equation*}
	To do that assume for a contradiction that there is $\eps>0$ so that 
	\begin{equation*}
	\liminf_{y\to x,\, y\in [0,2r]}\tilde U(y)= \tilde U(x)-\eps.
	\end{equation*}
	This implies that there is a sequence $y_n\to x$, $y_n\in [0,2r]$, such that $\lim_{n\to \infty }\tilde U(y_n)= \tilde U(x)-\eps$. 
	Pick $0<\alpha<\eps$. 
	Then $(y_n,\tilde U(y_n)+\alpha)\in M^c$ for sufficiently big $n$ and since  $(y_n,\tilde U(y_n)+\alpha)\to (x,\tilde U(x)-\eps+\alpha)\in M$ we obtain that $(x,\tilde U(x)-\eps+\alpha)\in\partial M\cap A_{s}^{u}$.
	But that being true for all $0<\alpha<\eps$ is a contradiction with \ref{2}.
	
	Extend $\tilde U$ by setting $\tilde{U}(x):=0$, $x\in[-2r,0)$. $\tilde{U}$ is now continuous on $[-2r,2r]$ and
	by \ref{2}, $\graph \tilde U$ is covered by the graphs of $f_i$ 
	together with the graph of $f_0\equiv 0$. Hence, 
	we obtain that $\tilde U$ is DC on $(-2r,2r)$ by Lemma~\ref{vldc}~(ix) and so $\tilde U$ is DCR on $[0,r)$ by \eqref{DCRint}.
	This proves \eqref{claim}.
	Pick some $r>0$ such that $\tilde U$ and $\tilde L$ are DCR on $[0,r)$.
	It follows that both $\tilde U'_{+}(0)$ and $\tilde L'_{+}(0)$ exist and, moreover, they are both equal to $0$ by \ref{5} and \ref{2}.
	
	Put $U\coloneqq \tilde U|_{[0,r)}$ and $L\coloneqq \tilde L|_{[0,r)}$.
	Then by \eqref{eq:bAndd}, \eqref{eq:aAndb}, \eqref{eq:cAndd} and \eqref{eq:aAndc} we easily obtain that 
	\ref{a} implies that $M$ is a $\tilde{T}_{r,u}^5$ set, 
	\ref{b} implies that $M$ is a $\tilde{T}_{r,u}^3$ set,
	\ref{c} implies that $M$ is a $\tilde{T}_{r,u}^4$ set and
	\ref{d} implies that $M$ is a $\tilde{T}_{r,u}^2$ set.
	\end{proof}

Now we will define important notions of ``DC sectors''.

\begin{definition}
	\label{sektory}
	\begin{enumerate}
		\item
		A set $S \subset \R^2$ will be called a {\it basic open DC sector} (of radius $r$) if $S= B(0,r) \cap \sepi(f)$, where  $0<r<\omega$ and $f$ is a DC function on $(-\omega, \omega)$
		such that $f(0)=0$, $R(x)\coloneqq  \sqrt {x^2 + f^2(x)}$ is strictly increasing on $[0,\omega)$ and 
		strictly decreasing on $(-\omega,0]$. 
		
		By an {\it open DC sector} (of radius $r$) we mean an image $\gamma(S)$ of a  basic open DC sector $S$ (of radius $r$) under a rotation $\gamma$.
		\item
		A set of the form $\gamma(\hyp f\cap \epi g) \cap B(0,r)$, where $\gamma$ is a rotation, $0<r<\omega$ and $f,g:[0,\omega)\to\er$ are DCR functions such that $g \leq f$, $f(0)=g(0)=f'_+(0)=g'_+(0)=0$ and
		the functions $R_f(x)\coloneqq  \sqrt {x^2 + f^2(x)}$, $R_g(x)\coloneqq  \sqrt {x^2 + g^2(x)}$ are strictly increasing on $[0,\omega)$,  will be called a
		{\it
			degenerated closed DC sector} (of radius $r$).
	\end{enumerate}
\end{definition}

\begin{remark}\label{otsec}
	\begin{enumerate}
		\item[(a)]
		If   $f$ is a DC function on $(-\omega, \omega)$ with  $f(0)=0$, then Lemma \ref{mondist} implies that there  exists $r>0$ such that $S= B(0,r) \cap \sepi(f)$ is a basic open DC sector.     
		\item [(b)] 
		Let $S$ be a basic open DC sector and $0<r < \omega$ and $f$ be as in Definition \ref{sektory}.
		Then
		\begin{enumerate}
			\item [(b1)] 
			For each $0<r^*<r$, $S \cap B(0, r^*)$ is a basic open DC sector.
			\item[(b2)] 
			There clearly exist $- r\leq a < 0 < b \leq r$ such that $(t, f(t)) \in \ B(0, r)$ for $t \in (a,b)$,
			$(t, f(t)) \in \ \partial B(0, r)$ for $t\in \{a,b\}$ and  
			$(t, f(t)) \notin \ \overline B(0, r)$ for $t \in (-\omega, \omega) \setminus [a,b]$. Distinguishing the cases when
			$f(a)\geq 0$ and $f(a) < 0$ ($f(b)\geq 0$ and $f(b) < 0$), we easily see  that in all possible four cases $S$ is an open connected set with $\partial S = \graph (f|_{[a,b]}) \cup
			\arc^r((b,f(b)), (a, f(a)))$.
		\end{enumerate}
	\end{enumerate}
\end{remark}
We will need the following lemma, whose elementary proof will be ommited.
\begin{lemma}\label{prist}
	Let $s,u>0$, $0<r<\omega$ and let $h$ be a DCR function on $[0,\omega]$.
		Suppose that $h(0)=0$, $h'(0)=0$, $\graph h \subset A_s^u$ and the function
	$R_h(x)\coloneqq  \sqrt {x^2 + h^2(x)}$ is strictly increasing on $[0,\omega]$. 
	Let $\arc^r(p,q)\coloneqq  A_s^{2u} \cap rS^1$.
		Then there exists  $0 < c \leq r$ such that $(t, h(t)) \in \ B(0, r)$ for $t \in [0,c)$,
	$(c, h(c)) \in \ \partial B(0, r)$ and  
	$(t, h(t)) \notin \ \overline B(0, r)$ for $t \in  (c,\omega)$. Further,
	\begin{enumerate}
		\item[{\rm (i)}]\ the set \ $P^+\coloneqq  \sepi h \cap \INt A_s^{2u} \cap B(0,r)$\ is open and connected
		and
		$$ \partial P^+ = \graph h_{[0,c]} \cup \arc^r((c,h(c)),q) \cup \overline{0q},$$
		\item[{\rm (ii)}]\ the set \ $P^-\coloneqq  \shyp h \cap \INt A_s^{2u} \cap B(0,r)$\ is open and connected
		and
		$$ \partial P^- = \graph h_{[0,c]} \cup \arc^r(p, (c,h(c))) \cup\overline{0p}.$$
	\end{enumerate}
\end{lemma}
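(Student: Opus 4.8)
The plan is to realize each of $P^+$ and $P^-$ as the open region strictly between two continuous ``envelope'' functions over an interval of $x$-values; once this is done, openness, connectedness and the exact shape of the boundary all drop out by elementary means. First I would produce $c$. Since $h$ is DCR it is continuous, so $R_h$ is continuous and strictly increasing with $R_h(0)=0<r$ and $R_h(\omega)\ge\omega>r$; hence there is a unique $c\in(0,\omega)$ with $R_h(c)=r$, strict monotonicity gives $R_h(t)<r$ on $[0,c)$ and $R_h(t)>r$ on $(c,\omega]$, which is exactly the stated in/out behaviour, and $c\le R_h(c)=r$.

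For part (i) I would describe $P^+$ by its vertical sections. Using $\graph h\subset A_s^u$ (so $-ux\le h(x)\le ux$) one checks that for $x\in(0,r)$ the section $(P^+)_x$ equals the open interval $(\ell(x),\mu(x))$ with $\ell(x)\coloneqq\max(h(x),-\sqrt{r^2-x^2})$ and $\mu(x)\coloneqq\min(2ux,\sqrt{r^2-x^2})$, both continuous. The set $X^+$ on which the section is nonempty is an interval: for $x<c$ one has $|h(x)|<\sqrt{r^2-x^2}$, whence $\ell(x)=h(x)<\mu(x)$; while on $(c,r)$ the inequality $|h(x)|>\sqrt{r^2-x^2}>0$ forces $h$ to have constant sign there, so either all sections stay empty beyond $c$ (and $X^+=(0,c)$) or they remain nonempty up to $r$ (and $X^+=(0,r)$). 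In either case $X^+=(0,x_{\max})$ with $x_{\max}\in\{c,r\}$ and $\ell<\mu$ on $X^+$, so $(x,y)\mapsto\bigl(x,\tfrac{y-\ell(x)}{\mu(x)-\ell(x)}\bigr)$ is a homeomorphism of $P^+$ onto $X^+\times(0,1)$; hence $P^+$ is open and connected.

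It then remains to compute $\partial P^+$. Since $\ell,\mu$ extend continuously to $[0,x_{\max}]$ with $\ell(0)=\mu(0)=0$ and $\ell(x_{\max})=\mu(x_{\max})$ (the region pinches at both ends), $\overline{P^+}$ is the closed region $\{\ell(x)\le y\le\mu(x)\}$ over $[0,x_{\max}]$, so $\partial P^+$ is the union of the two envelope graphs. I would identify each envelope piecewise: $\ell(x)=h(x)$ for $x\le c$ (contributing $\graph h_{[0,c]}$) and $\ell(x)=-\sqrt{r^2-x^2}$ beyond $c$ when $h<0$ there; while $\mu(x)=2ux$ for $x\le x_q\coloneqq r/\sqrt{1+4u^2}$ (contributing the radius $\overline{0q}$, since $q=(x_q,2ux_q)$) and $\mu(x)=\sqrt{r^2-x^2}$ for $x\ge x_q$. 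Using $x_q<c$ (which follows from $|h(c)|=\sqrt{r^2-c^2}\le uc$), one checks that in both sign-cases the circular pieces of the two envelopes join into the single arc $\arc^r((c,h(c)),q)$, giving $\partial P^+=\graph h_{[0,c]}\cup\arc^r((c,h(c)),q)\cup\overline{0q}$. Finally, part (ii) follows by applying part (i) to $-h$ and reflecting in the $x$-axis, which fixes $A_s^{2u}$, $B(0,r)$ and $c$, interchanges $\sepi$ with $\shyp$, sends $q$ to $p$, and reverses the orientation of the arc.

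The hard part will be the boundary computation rather than connectedness: one must verify carefully that $\overline{P^+}$ is genuinely the region between the closed envelopes (the pinching at $x=0$ and at $x=x_{\max}$), and then patch the piecewise formulas for $\ell$ and $\mu$ across the two cases $h(c)\ge 0$ and $h(c)<0$ so that the lower and upper circular sub-arcs merge into exactly $\arc^r((c,h(c)),q)$. Everything else reduces to the monotonicity of $R_h$ and the containment $\graph h\subset A_s^u$.
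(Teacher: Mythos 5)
The paper states this lemma with the remark that its ``elementary proof will be omitted,'' so there is no argument of the authors to compare your proposal against; I can only assess it on its own terms. Your proof is correct and complete: describing $P^{\pm}$ by vertical sections between the continuous envelopes $\ell(x)=\max\bigl(h(x),-\sqrt{r^2-x^2}\bigr)$ and $\mu(x)=\min\bigl(2ux,\sqrt{r^2-x^2}\bigr)$ (the constraint $y>-2ux$ being absorbed by $y>h(x)\ge -ux$), the pinching at $x=0$ and $x=x_{\max}$, the sign dichotomy of $h$ on $(c,r)$ deciding whether $x_{\max}=c$ or $x_{\max}=r$, and the inequality $x_q<c$ (from $\sqrt{r^2-c^2}=|h(c)|\le uc$), which makes the segment $\overline{0q}$ and the circular arc meet at $q$ before the graph of $h$ exits the ball, are precisely the points that need checking, and you check them; the reduction of (ii) to (i) via reflection in the $x$-axis (which preserves $A_s^{2u}$, $B(0,r)$ and $c$, swaps $p$ and $q$, and reverses arc orientation) is also sound.
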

\begin{lemma}\label{L:extension}
	Let $\omega>0$.
	Let $f$ be a DC function on $(-\omega, \omega)$ and let $g,h$ be DCR functions on $[0,\omega)$ such that $g(0)=h(0)=g'_+(0)=h'_+(0)=0$ and  $g\leq h$ on $[0,\omega)$.
	Then
	\begin{enumerate}[label={\rm (\roman*)}]
		\item\label{cond:extensionSubgraph} there is a DC aura $F$ in $(-\omega, \omega)\times\er$ for $\hyp f$,
		\item\label{cond:extensionNotSubgraph} there is a DC aura $G$ in $(-\omega, \omega)\times\er$ for $\hyp h\cap \epi g$.
	\end{enumerate}
\end{lemma}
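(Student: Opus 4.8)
The plan is to exhibit explicit DC auras of the form ``maximum of finitely many DC functions'' and to read off weak regularity directly from the Clarke subgradient of a finite maximum. For part~\ref{cond:extensionSubgraph} I would set $F(x,y):=\max(y-f(x),0)$ on $C:=(-\omega,\omega)\times\er$. Writing $f=g_1-g_2$ with $g_1,g_2$ convex on $(-\omega,\omega)$, the function $y-f(x)=(y+g_2(x))-g_1(x)$ is DC on $C$, so $F$ is DC by Lemma~\ref{vldc}~(i). Clearly $F\geq 0$ and $F^{-1}\{0\}=\{(x,y):y\leq f(x)\}=\hyp f$. At any point with $F(x,y)>0$ we have $y>f(x)$, the maximum is attained only by $y-f(x)$, and hence $\partial F(x,y)=\{(-\xi,1):\xi\in\partial f(x)\}$; every such vector has norm $\geq 1$. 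Thus $\liminf_i|u_i|\geq 1>0$ along any admissible sequence, so $0$ is weakly regular and $F$ is a DC aura in $C$ for $\hyp f$.

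For part~\ref{cond:extensionNotSubgraph} the construction is analogous but must also cut off the half-plane $\{x\geq 0\}$ and, crucially, be arranged so that the two graph constraints never interact badly at the tip $(0,0)$ (where, since $g(0)=h(0)=0$, the zero set reduces to the single point $(0,0)$). Since $g,h$ are DCR, I choose DC functions $\tilde g,\tilde h$ on $\er$ restricting to $g,h$ on $[0,\omega)$, and put $H:=\max(\tilde h,\tilde g)$, $G_0:=\min(\tilde h,\tilde g)$. By Lemma~\ref{vldc}~(i) these are DC, and since $g\leq h$ on $[0,\omega)$ they still satisfy $H|_{[0,\omega)}=h$ and $G_0|_{[0,\omega)}=g$, while now $G_0\leq H$ holds on all of $\er$. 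I then define $G(x,y):=\max\{\,y-H(x),\,G_0(x)-y,\,-x,\,0\,\}$ on $C$. Again $G$ is DC and nonnegative, and $G^{-1}\{0\}=\{(x,y):0\leq x<\omega,\ g(x)\leq y\leq h(x)\}=\hyp h\cap\epi g$, because the summand $-x$ forces $x\geq 0$ on the zero set and on $[0,\omega)$ one has $H=h$, $G_0=g$.

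The remaining and decisive point is weak regularity of $0$ for $G$. Denote the four functions in the maximum by $\phi_1,\dots,\phi_4$; by \cite[Proposition~2.3.12]{Cl90}, at every point $\partial G$ lies in the convex hull of the $\partial\phi_j$ over the \emph{active} indices (those attaining the maximum), where $\partial\phi_1(x,y)=\{(-\xi,1):\xi\in\partial H(x)\}$, $\partial\phi_2(x,y)=\{(\eta,-1):\eta\in\partial G_0(x)\}$, $\partial\phi_3=\{(-1,0)\}$ and $\partial\phi_4=\{(0,0)\}$. Whenever $G>0$ the constant $\phi_4$ is inactive; moreover, since $G_0\leq H$, the equality $\phi_1=\phi_2$ can occur only at the nonpositive value $\tfrac12(G_0(x)-H(x))$, so $\phi_1$ and $\phi_2$ are never simultaneously active at a positive level. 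Hence the active set is one of $\{1\},\{2\},\{3\},\{1,3\},\{2,3\}$. In the singleton cases the subgradient has norm $\geq 1$ (cases $1,2$) or equals $(-1,0)$ (case $3$). In case $\{1,3\}$ every $u\in\partial G$ has the form $(-\lambda\xi-(1-\lambda),\lambda)$ with $\lambda\in[0,1]$; if $u\to 0$ then $\lambda\to 0$ from the second coordinate, whence the first coordinate tends to $-1$ because $\xi$ stays bounded by $\lip H$, a contradiction, and symmetrically for $\{2,3\}$. Therefore no admissible sequence satisfies $u_i\to 0$, $0$ is a weakly regular value, and $G$ is the desired DC aura.

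The genuinely delicate issue is precisely the behaviour at the tip $(0,0)$, where all of $\phi_1,\phi_2,\phi_3$ tend to $0$ simultaneously: an arbitrary DC extension of $g,h$ could allow $\phi_1$ and $\phi_2$ to be active at a common positive level with nearly equal weights, so that the $\pm1$ contributions in the second coordinate cancel while the first coordinate is also forced small, violating weak regularity. The $\max/\min$ choice of the extensions, which guarantees $G_0\leq H$ everywhere, is exactly what excludes this cancellation, and is the key idea of the argument; note that it uses only $g\le h$ and the continuity at $0$, while the hypotheses $g'_+(0)=h'_+(0)=0$ are available from the sector setup but are not needed for this step.
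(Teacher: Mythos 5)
Your proof of part \ref{cond:extensionSubgraph} is correct and is essentially identical to the paper's ($F(x,y)=\max(y-f(x),0)$, with weak regularity read off from the fact that the second coordinate of every Clarke subgradient equals $1$ above the graph).

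For part \ref{cond:extensionNotSubgraph} your argument is also correct, but it takes a genuinely different route. The paper defines $G$ piecewise: $g(x)-y$ below the lower graph, $y-h(x)$ above the upper graph, $0$ in between (all for $x\ge 0$), and $\sqrt{x^2+y^2}$ for $x<0$; it then needs the Mixing Lemma (Lemma~\ref{vldc}~(ix)) to see that $G$ is DC, and it verifies weak regularity by a direct analysis of gradients at nearby differentiability points, with a separate argument on the vertical axis (where the second coordinate of the gradient is bounded below by $1/\sqrt{2}$). You instead write $G$ as a single global maximum of four DC functions, choosing the extensions $H=\max(\tilde h,\tilde g)$, $G_0=\min(\tilde h,\tilde g)$ precisely so that $G_0\le H$ on all of $\er$, which rules out simultaneous activity of the two graph constraints at any positive level; weak regularity then reduces to a clean finite case analysis on active index sets via \cite[Proposition~2.3.12]{Cl90}, with the only nontrivial cases $\{1,3\}$ and $\{2,3\}$ handled by the observation that the second coordinate forces the mixing weight to $0$ while the first coordinate then tends to $-1$ (your appeal to boundedness of $\partial H$ should be to \emph{local} Lipschitzness of the DC function $H$ near the limit point, not a global Lipschitz constant, but since $x_i\to x$ this is all you need). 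What your approach buys is that DC-ness is immediate from Lemma~\ref{vldc}~(i) without any continuity/mixing verification, and the subgradient analysis is uniform over the whole domain, including the tip; the paper's version buys an explicit formula whose zero set and gradients are transparent region by region. Both proofs correctly use only $g\le h$ and continuity at $0$, not the hypotheses on the one-sided derivatives.
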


\begin{proof}
	To prove \ref{cond:extensionSubgraph} consider the function $F:(-\omega, \omega)\times\er\to\er$ defined by
	$F(x,y)= \max(y-f(x),0)$.
	First note that $F$ is DC by Lemma~\ref{vldc}~(i).
	Moreover, if $y > f(x)$ and $x$ is a point of differentiability of $F$ then the second coordinate of $\nabla F(x,y)$ is equal to $1$.
	Hence from the definition of the Clarke subgradient (which is recalled in the preliminaries) it follows that the second coordinate of every $v\in\partial F(x,y)$ is also equal to $1$ whenever $y>f(x)$.
	This in particular implies that $0$ is a weakly regular value of $F$ 
	and so \ref{cond:extensionSubgraph} holds, since $\hyp f=F^{-1}(\{0\})$.

	To prove \ref{cond:extensionNotSubgraph} consider the function $G:D_G=(-\omega,\omega)\times\er\to\er$ defined by
	\begin{equation*}
	G(x,y)= 
	\begin{cases} 
	g(x)-y & \mbox{if }   x\geq 0 \mbox{ and }  y\leq g(x),  \\
	y-h(x) & \mbox{if } x\geq 0 \mbox{ and }  y\geq h(x),  \\
	0 & \mbox{if } x\geq 0 \mbox{ and } g(x)> y> h(x),  \\
	\sqrt{x^2+y^2} & \mbox{otherwise.}
	\end{cases} 
	\end{equation*}
	The fact that $G$ is DC follows from Lemma~\ref{vldc}~(i), (ix).
	
	To prove the weak regularity of $0$, first observe that if $(u,v) \in D_G$, $u \geq 0$, $G$  is differentiable at $(u,v)$
	 and $v> h(u)$ (resp. $v < g(u)$), then the second coordinate of $\nabla G(u,v)$ is equal to $1$ 
	(resp. to $-1$). 
	Further consider a point $w=(x,y) \in D_G \setminus (\hyp h\cap \epi g) $ and  $\nu\in\partial G(w)$. 
	If $x>0$, then the above observation implies (as in part (i)) that $|\nu| \geq 1$.
	If $x<0$ then  $\nu =\nabla G(w)=\frac{w}{|w|}$ and so $|\nu|=1$. 
	Now consider the case $x=0$, $y>0$. Note that if $u<0$ and $v>-u$ then the second coordinate of $\nabla G(u,v)$ is at least $\frac{1}{\sqrt{2}}$. Consequently we easily obtain that 
	 the second coordinate of $\nu$ is at least $\frac{1}{\sqrt{2}}$ and thus $|\nu|\geq \frac{1}{\sqrt{2}}$.
	 By a quite symmetrical way we we obtain $|\nu|\geq \frac{1}{\sqrt{2}}$ if  $x=0$, $y<0$.
	 Since $\hyp h\cap \epi g=G^{-1}(\{0\})$, we obtain that $0$ is a weakly regular value of 
	 $G$, and thus (ii) is proved.
	\end{proof}

\begin{lemma}\label{L:twoSets}
	Let $U,V\subset \er^d$ be connected open sets such that $\partial U=\partial V$ and $U\cap V\not=\varnothing$.
	Then $U=V$.
\end{lemma}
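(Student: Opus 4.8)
The plan is to establish the two inclusions $U\subseteq V$ and $V\subseteq U$ separately; since the hypotheses $\partial U=\partial V$ and $U\cap V\neq\varnothing$ are symmetric in $U$ and $V$, it will suffice to prove $U\subseteq V$. The guiding idea is a standard connectedness dichotomy: I want to write $U$ as a disjoint union of two relatively open pieces, one meeting $V$ and one lying entirely outside $\overline V$, and then use that $U$ is connected and $U\cap V\neq\varnothing$ to rule out the second piece.

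First I would record the elementary facts that for any open set $W$ one has $W\cap\partial W=\varnothing$ and $\overline W=W\cup\partial W$ (a disjoint union). Applying the latter to $V$, the ambient space splits as $\er^d=V\sqcup\partial V\sqcup(\overline V)^c$, and intersecting with $U$ gives
$$U=(U\cap V)\cup(U\cap\partial V)\cup\bigl(U\cap(\overline V)^c\bigr).$$
At this point I would invoke the hypothesis $\partial U=\partial V$: the middle term becomes $U\cap\partial V=U\cap\partial U$, which is empty because $U$ is open. Hence $U$ is the disjoint union of the two relatively open sets $U\cap V$ and $U\cap(\overline V)^c$.

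Since $U$ is connected and $U\cap V\neq\varnothing$, one of these two open pieces must be empty, forcing $U\cap(\overline V)^c=\varnothing$, i.e.\ $U\subseteq\overline V$. Combining this with $\overline V=V\cup\partial V=V\cup\partial U$ and using once more that $U\cap\partial U=\varnothing$, I conclude $U\subseteq V$. Exchanging the roles of $U$ and $V$ yields $V\subseteq U$, and therefore $U=V$.

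There is no serious obstacle here; this is essentially point-set topology. The one step that actually uses the specific hypothesis, rather than general facts about open sets, is the identification $U\cap\partial V=U\cap\partial U=\varnothing$, which is precisely where $\partial U=\partial V$ enters and makes the connectedness dichotomy decisive. I would just take care to apply the two elementary facts ``$\overline V=V\cup\partial V$ disjointly'' and ``$U\cap\partial U=\varnothing$'' consistently, as both rely on openness.
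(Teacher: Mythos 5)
Your proof is correct and rests on the same idea as the paper's: the hypothesis $\partial U=\partial V$ makes $U\cap\partial V$ empty, so a connectedness dichotomy forces the conclusion. The only (cosmetic) difference is that you split $U$ into the two open pieces $U\cap V$ and $U\cap(\overline V)^c$ and invoke symmetry, whereas the paper splits the connected set $U\cup V$ into the three disjoint open pieces $(U\setminus\overline V)\cup(V\setminus\overline U)\cup(U\cap V)$ in a single contradiction argument.
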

\begin{proof}
	Aiming for a contradiction suppose that $U\not= V$. 
	Then at least one of the conditions $U\setminus V\not=\varnothing$ and $V\setminus U\not=\varnothing$ holds.
	Since $\partial U=\partial V$ we have $U\setminus V=U\setminus \overline V$ and $V\setminus U=V\setminus \overline U$.
	This  implies that the connected set $U\cup V$ can be expressed as a union of three pairwise disjoint open sets $(U\setminus \overline V)\cup(V\setminus \overline U)\cup (U\cap V)$
	of which at least two are non-empty. This is a contradiction with the connectedness of $U\cup V$.
\end{proof}

\begin{theorem}\label{T:characterisationInPlaneNew}
	Let $M$ be a closed subset of $\er^2$. Then $M$ is a  locally WDC set if and only if for each $x\in\partial M$
	there is $\rho>0$ such that one of the following conditions holds:
	\begin{enumerate}[label={\rm (\roman*)}]
		\item\label{cond:singlePointNew} $M\cap B(x,\rho)=\{x\}$,
		\item\label{cond:oneConeNew} there is a degenerated closed DC sector $C$ of radius $\rho$ such that $$M\cap B(x,\rho)=x+C,$$
		\item\label{cond:manyConesNew} there are pairwise disjoint  open DC sectors $C_1,\dots,C_k$ of radius $\rho$ such that 
		\begin{equation}\label{kolac}
		M^c\cap B(x,\rho)=\bigcup_{i=1}^k \left(x+C_i\right).
		\end{equation}
	\end{enumerate}
\end{theorem}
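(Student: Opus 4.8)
The plan is to prove the two implications separately, reducing in both to the case $x=0$ and (via Proposition~\ref{P_local} together with Corollary~\ref{Cor:propertiesOfRetracts}) to a compact $M$. By Proposition~\ref{P_loc_aura} it suffices, for the ``if'' direction, to exhibit a DC aura in a small ball around each boundary point, whereas for the ``only if'' direction one must read off the local shape of $M$ at each boundary point.

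For the ``if'' direction I would build the local auras case by case. In case~\ref{cond:singlePointNew} the function $z\mapsto|z-x|$ is convex, hence DC, vanishes exactly at $x$, and has unit-norm subgradients elsewhere, so $0$ is weakly regular. In case~\ref{cond:oneConeNew} I feed the two generating DCR functions $f,g$ (with $g\le f$) of the degenerated sector into Lemma~\ref{L:extension}\,\ref{cond:extensionNotSubgraph} to obtain a DC aura $G$ for $\hyp f\cap\epi g$; then $\tilde G(z):=G(\gamma^{-1}(z-x))$ is DC with weakly regular value $0$ (isometries preserve both properties by Lemma~\ref{vldc}), and inside $B(x,s)$ for small $s$ the set $x+C$ coincides with $x+\gamma(\hyp f\cap\epi g)$, so $\tilde G$ is a DC aura in $B(x,s)$. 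Case~\ref{cond:manyConesNew} is the interesting one. Writing each $C_i=\gamma_i(B(0,\rho)\cap\sepi f_i)$, I extend each generating function to a DC function $\tilde f_i$ on $\R$ (Lemma~\ref{vldc}\,(viii)) and, with $(t,y):=\gamma_i^{-1}(z-x)$, set $\ell_i(z):=y-\tilde f_i(t)$; then $\ell_i$ is DC, $\{\ell_i>0\}\cap B(x,\rho)=x+C_i$, and $|v|\geq1$ for every $v\in\partial\ell_i$. The candidate aura is
$$f:=\max(\ell_1,\dots,\ell_k,0).$$
It is DC by Lemma~\ref{vldc}\,(i), and since the sectors are pairwise disjoint, at most one $\ell_i$ is positive at any point of $B(x,\rho)$; hence $\{f=0\}\cap B(x,\rho)=\bigcap_i\{\ell_i\leq0\}\cap B(x,\rho)=M\cap B(x,\rho)$, and on $\{f>0\}$ the maximum is attained by a single $\ell_i$, so $\partial f=\partial\ell_i$ there and $|v|\geq1$ for all $v\in\partial f$. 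Thus $0$ is weakly regular and Proposition~\ref{P_loc_aura} finishes this direction. This $\max$ device is precisely what makes, e.g., the double wedge $\{|y|\leq c|x|\}$ WDC, and it automatically handles ``antipodal'' sectors, where a sum of half-space auras would violate the non-touching hypothesis of Lemma~\ref{L_f+g}.

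For the ``only if'' direction I would globalise the per-direction information of Lemma~\ref{L:locpie} over all $v\in S^1$. By Proposition~\ref{wdcpokr} and Remark~\ref{dcgr}\,(iii), $\partial M$ is covered near $0$ by finitely many DC graphs, each with a one-dimensional tangent at $0$; since any direction $v$ of type $T^3,T^4$ or $T^5$ must be tangent at $0$ to one of these graphs (Remark~\ref{dcgr}\,(i)), the set of such ``transition'' directions is finite. Consequently $V_1\cup V_2$ (exterior, resp.\ interior, directions, which are open by Remark~\ref{otypech2}\,(1)) is the complement of a finite set, and $S^1$ splits into finitely many open arcs, each lying entirely in $V_1$ or in $V_2$. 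I then read off the three cases from this pattern: if every direction is of type $T^1$ then $M\cap B(0,\rho)=\{0\}$ by Remark~\ref{otypech2}\,(2), giving~\ref{cond:singlePointNew}; if the pattern is a single $T^5$ direction with all others $T^1$ then $M$ is a single degenerated closed DC sector, giving~\ref{cond:oneConeNew}; in all remaining cases each maximal arc of $V_1$, together with the DC boundary curves supplied by Lemma~\ref{zestr}, produces an open DC sector, and using Lemma~\ref{prist}, Remark~\ref{otsec} and the uniqueness Lemma~\ref{L:twoSets} I identify the corresponding connected component of $M^c\cap B(0,\rho)$ with that sector, obtaining the disjoint decomposition~\ref{cond:manyConesNew}.

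The main obstacle is this last globalisation step: one must show that the directional data assemble into exactly one of the three mutually exclusive pictures. In particular, an isolated cusp must force~\ref{cond:oneConeNew} (its complement wraps around $0$ and cannot be split into disjoint open DC sectors, each bounded by a single through-curve), whereas a cusp occurring as a gap between two complement sectors is absorbed into~\ref{cond:manyConesNew}. Matching the boundaries of the candidate sectors with the actual components of $M^c$ — where Lemma~\ref{L:twoSets} is the essential tool — and verifying disjointness and exhaustiveness is the technically heaviest part; by contrast, the weak-regularity checks in the ``if'' direction are painless, precisely because on $\{f>0\}$ only one sector function is active.
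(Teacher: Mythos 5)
Your proposal follows essentially the same route as the paper's proof: for the ``if'' direction the same case-by-case local aura construction (your $\max(\ell_1,\dots,\ell_k,0)$ is literally the paper's $\max_i F_i$ built from Lemma~\ref{L:extension}, and the single-active-sector argument for weak regularity is the one the paper uses), and for the ``only if'' direction the same classification of directions via Lemma~\ref{L:locpie}, finiteness of $V_3\cup V_4\cup V_5$ through tangency to the covering DC graphs, openness of $V_1,V_2$, and the trichotomy $V_1=S^1$ / $V_1=S^1\setminus\{v\}$ / $V_1$ a finite union of nondegenerate arcs. The one caveat is that the step you rightly single out as the technical core --- assembling each maximal $V_1$-arc together with the transition-direction data into an open DC sector and matching it to a component of $M^c\cap B(0,\rho)$ --- is left as an outline, whereas the paper executes it by gluing the boundary functions $U^{v_i}$ and $L^{w_i}$ into one DC function $f$ on $(-\omega,\omega)$, forming the basic sector $S^*=\sepi(f)\cap B(0,\rho)$, and concluding $C_i=S^*$ from $\partial C_i=\partial S^*$ via Lemmas~\ref{prist} and~\ref{L:twoSets}; your plan names exactly these tools, so no idea is missing.
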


\begin{proof}
	First suppose that $M$ is a locally WDC set and $x \in \partial M$. We can and will suppose
	$x=0$. For $1\leq i \leq 5$, define $V_i \subset S^1$ by \eqref{vei} and observe
	that $S^1 = \bigcup_{i=1}^5 V_i$ by Lemma \ref{L:locpie}. By Proposition \ref{wdcpokr} and
	Remark~\ref{dcgr}~(i), the set $T\coloneqq  S^1 \cap \Tan(\partial M,0)$ is finite.
	Since clearly $V_3 \cup V_4 \cup V_5\subset T$, we obtain that $V_3 \cup V_4 \cup V_5$
	is finite. By Remark~\ref{otypech2}~(1) $V_1$ and $V_2$ are open in $S^1$. 
	The case $V_1 = \varnothing$ is impossible, since then would be $V_3=V_4= V_5= \varnothing$,
	so $V_2= S^1$ and thus $0 \in \INt M$ by Remark~\ref{otypech2}~(2). The above observations
	easily imply that (in the space $S^1$) the open set $\varnothing \neq V_1$ has finite number of components and thus either $V_1= S^1$ or $V_1 = \bigcup_{i=1}^k \arc (v_i,w_i)$, where the arcs 
	$\arc (v_i,w_i)$
	are pairwise disjoint. If $V_1= S^1$, we obtain (i) by Remark \ref{otypech2}~(2).
	If  $V_1\neq S^1$, then either
	$\ V_1= S^1\setminus \{v\}$
	for some $v \in S^1$, or $v_i \neq w_i$,\ $i=1,\dots,k$.
	
	If  $\ V_1= S^1\setminus \{v\}$, then the condition (ii) holds. Indeed, we can and will suppose $v= (1,0)$.
	Then there exist $r,u>0$ such that $M$ is a $\tilde{T}_{r,u}^5$-set.  
	Let $U,L: [0,r) \to \R$ be the corresponding DCR functions from Definitions \ref{types}~(5).
	Let $p,q \in S^1$ be such that $A_\infty^{2u} \cap S^1 = \arc(p,q)$. Then, applying Remark \ref{otypech2}~(2) to $K= \overline{\arc(q,p)}$ and using Lemma \ref{mondist}, we easily obtain (ii).
	
	Now  suppose that  $V_1 = \bigcup_{i=1}^k \arc (v_i,w_i)$ with $v_i \neq w_i$. 
	Clearly 
	$$K\coloneqq  V_3 \cup V_4 \cup V_5 = \{v_i: 1\leq i \leq k\} \cup \{w_i: 1\leq i \leq k\}.$$
	Further 
	\begin{equation}\label{krajni}
	z\in\begin{cases}
	V_5 &\text{ iff }z=v_i = w_j \text{ for some }i\neq j,\\
	V_3 &\text{ iff } z\notin V_5 \text{ and }z=v_i \text{ for some } i,\\
	V_4 &\text{ iff } z\notin V_5 \text{ and }z=w_i \text{ for some } i.
	\end{cases}
	\end{equation}
	Let $\xi>0$. By the definition of $V_j$ and Remark \ref{otypech}~(2) we can,
	for each $3\leq j \leq 5$ and $z \in V_j$, choose $r(z), u(z) > 0$ such that  $u(z) < \xi$ and  $(\gamma_{0,z})^{-1} (M)$
	is a   $\tilde{T}_{r(z),u(z)}^j$-set, and denote
	$$  D(z)\coloneqq  A_{\infty}^{2 u(z)} (0,z)=  \gamma_{0,z} (A_{\infty}^{2 u(z)}),\ \ \ z \in K.$$
	We can (and will) fix $\xi>0$ so small, that the angles $D(z),\ z \in K$, are pairwise disjoint.
	Now define $v_i^-,v_i^+,w_i^-,w_i^+\in S^1$ so that
	$$ \arc(v_i^-,v_i^+)= \INt D(v_i) \cap S^1,\ \ \  \arc(w_i^-,w_i^+)= \INt D(w_i) \cap S^1.$$
	Since
	$$  \overline{\arc(v_i^+, w_i^-)} \subset V_1,\ 1\leq i\leq k\ \ \text{and}\ \ S^1 \setminus \bigcup_{i=1}^k \arc(v_i^-, w_i^+)
	\subset V_2,$$
	Remark \ref{otypech2}~(2) implies that there exists $\rho^1>0$ such that, for each  $0< \rho< \rho^1$,   
	$$
	E_i = E_i^{\rho} \coloneqq \{tu:\, 0<t<\rho,\, u\in\overline{\arc(v_i^+,w_i^-)}\}\subset M^c$$
	and
	$$\bigcup_i\{tu:\, 0<t<\rho,\, u\in\overline{\arc(v_i^-,w_i^+)}\}\supset M^c.$$
	Consequently, for all  $0<\rho< \rho^1$,
	\begin{equation}\label{kdedop}
	M^c \cap B(0, \rho) = \bigcup_{i=1}^k E_i \cup  \bigcup_{i=1}^k  (M^c \cap D(v_i) \cap B(0, \rho)) \cup
	\bigcup_{i=1}^k  (M^c \cap D(w_i) \cap B(0, \rho)).
	\end{equation}
	If $z \in V_3 \cup V_5$, we define the function 
	$U^z:[0,r(z)] \to \R$ as in Definition \ref{types} and also $Q^-(z)\coloneqq  A_{r(z)}^{2u(z)} \cap \sepi U^z$ and $P^-(z): =\gamma_{0,z}(Q^-(z))$; similarly if $z \in V_4 \cup V_5$, we define the function $L^z:[0,r(z)] \to \R$,
	$Q^+(z)\coloneqq  A_{r(z)}^{2u(z)} \cap \shyp L^z$   and  $P^+(z): =\gamma_{0,z}(Q^+(z))$.
	 Moreover, using Lemma \ref{mondist}, we can suppose that the functions
	\begin{equation}\label{Mor}
	x \mapsto \sqrt{x^2 + (U^z(x))^2},\ x \mapsto \sqrt{x^2 + (L^z(x))^2}\ \ \text{are strictly increasing}.
	\end{equation}
	
	We will show that \eqref{kolac} holds for all sufficiently small $\rho>0$, if we set
	$$ C_i = C_i^{\rho} \coloneqq  E_i \cup (P^-(v_i)\cap B(0,\rho)) \cup (P^+(w_i)\cap B(0,\rho)),\ \ \ 1\leq i \leq k.$$ 
	First observe, that $C_i$ are pairwise disjoint.
	Further observe that, for each $0< \rho < \rho^1$,  the inclusion
	$M^c\cap B(x,\rho)\subset \bigcup_{i=1}^k C_i$ follows
	from \eqref{krajni} and \eqref{kdedop}, and the opposite inclusion is obvious.
	Thus it is sufficient to show that, for all sufficiently small $\rho$, each $C_i$ is an open DC sector.

	To this end, fix $1 \leq i \leq k$, consider a $0 < \rho < \rho^1$, denote  $v\coloneqq  v_i$, $w\coloneqq  w_i$, $S\coloneqq C_i$ and
	define $c$ as the midpoint of the arc $\arc(v,w)$.
	We can and will suppose that $c= (0,1)$. 
	
	Since $U^v$ is a DCR function and $U^v(0)=0$, Lemma \ref{vldc}~(ix) implies that it can be extended to a DC function 
	$U^v_*: \R \to \R$ with $U^v_*(t)=0,\ t\leq 0$. Then $P\coloneqq  \gamma_{0,v} (\graph U^v_*)$ is clearly a DC graph and $\Tan(P,0)= \spa \{v\}$.  
	 Therefore (since $\Pi_1(v)\neq 0$) we can use Lemma \ref{zestr} and choose  $0< \rho^2 < \rho^1$
such that, for each $0< \rho < \rho^2$, there exist 		
	 $\beta>0$ and a DCR function $\vf$ on $[0,\beta)$ such that
	$$A_{\infty}^{2u(v)}(0,v) \cap P \cap B(0,\rho) = \graph \vf.$$
	So, by definition of $f$, there exists  $0< \rho^3 < \rho^2$
such that, for each $0< \rho < \rho^3$,
	\begin{equation}\label{grotv}
	A_{\infty}^{2u(v)}(0,v) \cap   \gamma_{0,v}(\graph U^v) \cap B(0,\rho) = \graph \vf.
	\end{equation}
	Quite similarly we can find   $0< \rho^4 < \rho^3$
such that, for each $0< \rho < \rho^4$, there exist 		
	 $\alpha < 0$ and a DCR function $\psi$ on $(\alpha, 0]$ such that
	\begin{equation}\label{grotv2}
	A_{\infty}^{2u(w)}(0,w) \cap   \gamma_{0,w}(\graph L^w) \cap B(0,\rho) = \graph \vf.
	\end{equation}  
	Choose $\omega < \min(\beta, - \alpha)$ and set $f(t)= \vf(t)$ for  $t \in [0,\omega)$ and
	$f(t)= \psi(t)$ for  $t \in (-\omega, 0]$. Then  $f$ is by Lemma \ref{vldc} (ix) 
	a DC function on $(-\omega, \omega)$
	and so, by Remark \ref{otsec}~(a), (b1), there exists   $0< \rho^5 < \rho^4$ such that, for all 
	$0< \rho < \rho^5$, $S^*\coloneqq  \sepi(f) \cap
	B(0,\rho)$ is a (basic) open DC sector. So it is sufficient to prove that   (for all 
	$0< \rho < \rho^5$) $S=S^*$.
	
	Using Lemma \ref{prist}, it is easy to prove that $S$ is open connected, $S\cap S^*\neq \varnothing$
	and, using also \eqref{grotv}, \eqref{grotv2} and Remark \ref{otsec}~(b2)),  $\partial S = \partial S^*$. Consequently $S=S^*$ by Lemma \ref{L:twoSets}.  \smallskip

	To prove the opposite implication assume that for each $x\in\partial M$
	there is $\rho>0$ such that one of conditions \ref{cond:singlePointNew}, \ref{cond:oneConeNew} or \ref{cond:manyConesNew} holds.
	We will construct for every such $x$ and $\rho$ a DC aura $F$ in $B(x,\rho)$ for $M$.
 	This is enough by Proposition~\ref{P_loc_aura}.
	Without any loss of generality we may and will assume that $x=0$.

	In the case of condition \ref{cond:singlePointNew} we can set $F(y)=|y|$, $y\in B(0,\rho)$.
	In the case of condition \ref{cond:oneConeNew}, $M\cap B(0,\rho)$ is a degenerated closed DC sector and so there are $\omega>\rho>0$, DCR functions $ g, h:[0,\omega)\to\er$ and rotation $\gamma$ such that $ g \leq  h$ and  $ g(0)= h(0)= g'_+(0)= h'_+(0)=0$ and such that 
	\begin{equation*}
	M\cap B(0,\rho)=\gamma(\hyp h\cap \epi g)\cap B(x,\rho).
	\end{equation*}
	By Lemma~\ref{L:extension}  \ref{cond:extensionNotSubgraph} there is a DC aura $\tilde F$ in $(-\omega, \omega)\times\er$ for $\hyp h\cap \epi g$.
	Put $F\coloneqq \tilde F\circ(\gamma^{-1}|_{B(0,\rho)})$.
	Then 
	\begin{equation*}
	F^{-1}(\{0\})=B(0,\rho)\cap\gamma(\hyp f\cap \epi g)=B(0,\rho)\cap M.
	\end{equation*}
	By Lemma~\ref{vldc}~(vi), (iv) $F$ is DC and therefore a DC aura in $B(0,\rho)$ for $M$ by \cite[Theorem~2.3.10]{Cl90}.
	
	In the case of condition \ref{cond:manyConesNew}, $C_1,\dots,C_k$ are open DC sectors and therefore there are $0<r<\omega$, DC functions $f_1,\dots,f_k:(-\omega, \omega)\to\er$ such that
	$f_i(0)=0$, $i=1,\dots,k$, and rotations $\gamma_1,\dots,\gamma_k$ such that $C_i= B(0,\rho) \cap \gamma_i(\sepi f_i)$.
	By Lemma~\ref{L:extension} \ref{cond:extensionSubgraph} there are DC auras $\tilde F_1,\dots,\tilde F_k$ in $(-\omega, \omega)\times\er$ for $\hyp f_1,\dots,\hyp f_k$, $i=1\dots,k$. 
	Put $F_i\coloneqq \tilde F_i\circ(\gamma_i^{-1}|_{B(0,\rho)})$, $i=1\dots,k$.
	As above, we obtain that each $F_i$ is a DC aura in $B(0,\rho)$ for $(C_i)^c\cap B(0,\rho)$.
	Put $F\coloneqq \max_{i} F_i$; $F$ is DC on $B(0,\rho)$ by Lemma~\ref{vldc}~(i). 
	Since $C_i$ are pairwise disjoint, we have $F=F_i>0$ on $C_i$ and $F^{-1}(\{0\})=M\cap B(0,\rho)$.
	So $F$ is a DC aura in $B(0,\rho)$ for $M$.
\end{proof}

\section{Open questions}

\begin{question}  \label{Q:LR}\rm
Let $f$ be as in Lemma~\ref{DL} and assume that $f$ is, moreover, DC. Does there exist $0<\ep<1$ and a {\it Lipschitz} mapping $H:\{f<\ep\}\times[0,1]\to\{f<\ep\}$ with the properties (i) - (vii)? (Cf.\ also the comment before Example~\ref{E:retractionNotLipschitz}.)
\end{question}

\begin{question}  \rm
Is the assertion of Proposition~\ref{lipman} still valid if we assume that the closed set $A\subset\R^d$ is merely a {\it topological} manifold of dimension $0<k<d$? (Cf.\ Corollary~\ref{topman}.) In $\R^2$, the answer is positive, which follows from Theorem~\ref{T:characterisationInPlaneNew}.
\end{question}

\begin{question} \rm  \label{Q:conn}
Let $M\subset\R^d$ be a compact, connected locally WDC set. Can any two points of $x,y\in M$ be connected by a (i) rectifiable curve, or even (ii) curve with finite turn, lying in $M$? (Note that a rectifiable curve has finite turn if and only if its arc-length parametrization has DCR components, see \cite[Remark~1.1, Lemma~5.5 and Corollary~5.8]{Duda}.)
Theorem~\ref{T:characterisationInPlaneNew} implies that the answer even for (ii) is positive in $\R^2$. Note also that a positive answer to Question~\ref{Q:LR} would imply a positive answer to (i) here. In the special case $\reach M>0$, Lytchak \cite[Theorems~1.2,1.3]{Ly05} showed that the curve $\varphi$ can be found even $C^{1,1}$.
\end{question}

\begin{question} \rm
Let $M\subset\R^d$ be a compact, connected, locally WDC set, and let $x,y\in\partial M$ be two points lying in the same component of $\partial M$. Can $x,y$ be connected by a (i) rectifiable turn, (ii) curve with finite turn, lying in $\partial M$? Again, Theorem~\ref{T:characterisationInPlaneNew} implies that the answer to (ii) is positive in the planar case.
\end{question}

\begin{question}  \label{Q:decomposition}\rm
Let $M$ be a compact WDC set in $\R^d$. Does there exists a decomposition
\begin{equation}\label{decomp} M = T_1 \cup \dots \cup T_m,
\end{equation}
where $T_i$, $i=1,\dots,m$, are pairwise disjoint and each $T_i$ is a   DC manifold
of dimension $0 \leq k_i \leq d$?
\end{question}
\begin{remark}\label{R:remarkToDecomposition}
	\begin{enumerate}
		\item[(i)]
		As already mentioned in the introduction, we conjecture that the answer is positive. 
		\item[(ii)]
		The answer is positive for  $d=2$ (see  below), but the question is open for $d\geq 3$, even in the case when $M$ is a set of positive reach.
		To prove the conjecture in dimension $2$ (following the strategy from \cite[Remark~7.11~(i)]{RZ}) first observe that if $M$ has such decomposition and $K$ is closed then $M\setminus K$ has such decomposition as well. Moreover, if $M_1,\dots, M_N$ are closed and all have such decomposition then also their union $M_1\cup\dots\cup M_N$ has the decomposition since it can be expressed as a disjoint union $M_1\cup (M_2\setminus M_1)\cup (M_3\setminus(M_1\cup M_2))\cup\cdots$.
		If $M$ is a compact WDC set in $\er^2$ then for every $x\in\partial M$ there is a $\rho_x>0$ as in Theorem~\ref{T:characterisationInPlaneNew}. 
		By the compactness of $\partial M$, we can find $x_1,\dots,x_n\in\partial M$ such that 
		\begin{equation*}
		\partial M=\bigcup_k \left(\partial M\cap\overline B\left(x_k,\frac{\rho_{x_k}}{2}\right)\right) \eqqcolon \bigcup_k  M_k.
		\end{equation*}
		Now, since $M$ is a disjoint union of $\partial M$ and $\INt M$, and $\INt M$ is a DC manifold of dimension $2$, it is enough to find a decomposition for $\partial M$ and therefore (by the argument above) it is enough to find it for each $M_k$.
		We will provide details only for the (most difficult) case of condition \ref{cond:manyConesNew} (from Theorem~\ref{T:characterisationInPlaneNew}).
		In that situation we have that 
		\begin{equation*}
		M_k=\bigcup_i \left(\partial S^k_i\cap\overline B\left(x_k,\frac{\rho_{x_k}}{2}\right)\right) \eqqcolon  \bigcup_i M^k_i,
		\end{equation*}
		where $S_i^k$ are the corresponding DC sectors from condition \ref{cond:manyConesNew}.
		Again, all sets $M^k_i$ are closed and so it suffices to find the decomposition for each of them separately.
		But this is easy since each $M_i^k$ is an isometric copy of a graph of a DCR function defined on a compact interval (cf. Remark~\ref{otsec}~(b2)).
		\item[(iii)] 
		It is also easy to see that, if a decomposition of type \eqref{decomp} exists, it is not uniquely determined. Moreover, an easy example shows (see \cite[Remark~7.11~(i)]{RZ}) that even in the case of a set of positive reach in $\R^2$, there is not always a  ``canonical decomposition''.
	\end{enumerate}
\end{remark}

\end{document}